\documentclass[11pt]{amsart}
\textwidth=5in
\textheight=7.5in
\usepackage{amssymb,amsfonts,amsmath}
\usepackage[all]{xy}
\usepackage[shortlabels]{enumitem}
\usepackage{hyperref, color}		
\newtheorem{theorem}{Theorem}[section]
\newtheorem{proposition}{Proposition}[section]

\newtheorem{corollary}{Corollary}[section]
\theoremstyle{definition}
\newtheorem{definition}{Definition}[section]
\newtheorem{lemma}{Lemma}[section]
\newtheorem{example}{Example}[section]
\theoremstyle{remark}
\newtheorem{remark}{Remark}
\theoremstyle{claim}

\theoremstyle{theoremA}
\newtheorem*{theoremA}{Theorem A}
\theoremstyle{theoremB}
\newtheorem*{theoremB}{Theorem B}  
\newcommand{\real}{\mathbb{R}}

\newcommand{\la}{\lambda}
\newcommand{\La}{\Lambda}
\newcommand{\al}{\alpha}
\newcommand{\om}{\omega}
\newcommand{\Om}{\Omega}
\newcommand{\na}{\nabla}
\newcommand{\ep}{\epsilon}

\newcommand{\Ga}{\Gamma}
\newcommand{\be}{\beta}
\newcommand{\ka}{\kappa}

\newcommand{\de}{\delta}

\newcommand{\ti}{\tilde}
\newcommand{\vol}{\text{\rm vol}}

\newcommand{\lan}{\left\langle}
\newcommand{\ran}{\right\rangle}
\newcommand{\p}{\partial}

\newcommand{\dv}{\mathrm{div\,}}
\newcommand{\m}{\mathcal}
\newcommand{\tr}{\mathrm{tr}\,}

\newcommand{\II}{I\!I}
\newcommand{\hs}{\mathrm{Hess}}
\newcommand{\rk}{\mathrm{rk}}
\newcommand{\rad}{\mathrm{rad}}


\begin{document}


\title[Integral estimates for the trace of symmetric operators]{Integral estimates for the trace of symmetric operators.}
\author{M. Batista}
\address{Instituto de Matem\'atica, Universidade Fe\-deral de Alagoas, Macei\'o, 
AL, CEP 57072-970, Brazil.}\email{mhbs28@gmail.com}
\author{H. Mirandola}
\address{Instituto de Matem\'atica, Universidade Fe\-deral do Rio de Janeiro, 
Rio de Janeiro, RJ, CEP 21945-970, Brazil.} \email{mirandola@ufrj.br}
\subjclass[2010]{53C42; 53C40}
\date{\today}
\maketitle
\thispagestyle{empty}
%


\begin{abstract} Let $\Phi:TM\to TM$ be a positive-semidefinite symmetric operator of class $C^1$ defined on a complete non-compact manifold $M$ isometrically immersed in a Hadamard space $\bar{M}$. In this paper, we given conditions on the operator $\Phi$ and on the second fundamental form to guarantee that either $\Phi\equiv 0$ or the integral $\int_M \mathrm{tr}\,\Phi dM$ is infinite. We will given some applications. The first one says that if $M$ admits an integrable distribution whose integrals are  minimal submanifolds in $\bar{M}$ then the volume of $M$ must be infinite. Another application states that if the sectional curvature of $\bar{M}$ satisfies $\bar{K}\leq -c^2$, for some $c\geq 0$, and $\lambda:M^m\to [0,\infty)$ is a nonnegative $C^1$ function such that gradient vector of $\lambda$ and the mean curvature vector $H$ of the immersion satisfy $|H+p\nabla \lambda|\leq (m-1)c \lambda$, for some $p\geq 1$, then either $\lambda\equiv 0$ or the integral $\int_M \lambda^s dM$ is infinite, for all $1\leq s\leq p$. 
\end{abstract}


\section{Introduction} 

Let $f:M^m\to \bar M$ be an isometric immersion of an $m$-dimensional Riemannian manifold $M$ in a Riemannian manifold $\bar M$ and $\II$ the second fundamental form of $f$. Let $\Phi:TM\to TM$ be a symmetric operator on $M$ of class $C^1$.  Consider the following definitions:
\begin{definition} The {\it $\Phi$-mean curvature vector field of the immersion} $f$ is the normal vector field $H_{\Phi}:M\to T^\perp M$ to the immersion $f$ defined by the trace:
\begin{equation*}
H_{\Phi}=\tr\left\{(X,Y)\in TM\times TM \mapsto \II(\Phi X,Y) \right\}
\end{equation*}
Note that $H_{\Phi}$ coincides with the mean curvature vector if $\Phi$ is the identity operator $I:TM\to TM$. 
\end{definition}
\begin{definition} The {\it divergence of $\Phi$} is the tangent vector field on $M$ defined by
\begin{equation*}
\lan \dv\Phi,X\ran = \tr\{Y\in TM \mapsto (\na_Y\Phi)X=\na_Y (\Phi X)-\Phi(\na_YX)\}, 
\end{equation*}
for all tangent vector field $X:M\to TM$. Note that if $\Phi=\la I$, where $\la:M\to \real$ is a $C^1$ function, then $\dv \Phi$ coincides with the gradient vector of $\la$.

\end{definition}
It is a well known fact that a complete noncompact minimal submanifold in a Hadamard manifold must have infinite volume (see for instance \cite{fr}). Our first theorem says the following:
\begin{theorem}\label{thm_asz} Let $f:M\to \bar M$ be an isometric immersion of a complete noncompact manifold $M$ in a complete simply-connected manifold $\bar M$ with nonpositive radial curvature with respect to some basis point  $q_0\in f(M)$.  Let $\Phi:TM\to TM$ be a positive-semidefinite symmetric operator of class $C^1$ such that $\tr\Phi(q_0)>0$. Assume that  
\begin{equation*}
|H_\Phi + \dv \Phi|\leq \frac{1}{r+\ep}\, \tr\Phi,
\end{equation*} 
for some $\ep>0$, where $r=d_{\bar M}(\cdot\,,q_0)$ is the distance in $\bar M$ from $q_0$. Then the rate of growth of the integral $\int_M\tr\Phi$ is at least logarithmic with respect the geodesic balls centered at $q_0$, that is, $$\liminf_{\mu\to \infty}\frac{1}{\log(\mu)}\int_{B_\mu(q_0)}\tr\Phi >0,$$
where $B_\mu$ denote the geodesic balls of $M$ centered at $q_0$. In particular, the integral $\int_M \tr\Phi$ must be infinite.
\end{theorem}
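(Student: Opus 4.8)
The plan is to extract, from the differential inequality on $\Phi$, a scalar differential inequality for the function $\varphi(\mu)=\int_{B_\mu(q_0)}\tr\Phi$, and then integrate it. The natural tool is the divergence theorem applied to a well-chosen tangent vector field. Specifically, let $\bar r=d_{\bar M}(\cdot,q_0)$ and $r=\bar r\circ f$, and consider the vector field $Z=\Phi(\nabla r)$ on $M$ (or a radially-weighted variant $\psi(r)\Phi(\nabla r)$ to be chosen later). A direct computation gives
\begin{equation*}
\dv Z = \lan \dv\Phi,\nabla r\ran + \tr\{Y\mapsto \Phi(\na_Y \nabla r)\},
\end{equation*}
and the second term, by the relation between $\na_Y\nabla r$ in $M$ and the Hessian of $\bar r$ in $\bar M$ together with the Gauss formula, splits into an intrinsic Hessian part plus a term involving $\II(\Phi\cdot,\cdot)$ paired with $\na \bar r$, which is exactly $-\lan H_\Phi,\na\bar r\ran$ in sign. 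So $\dv Z = \lan H_\Phi+\dv\Phi,\na r\ran + \tr\{Y\mapsto \Phi(A_Y)\}$, where $A_Y$ is the tangential part of $\bar\na_Y \bar\na\bar r$. The curvature hypothesis (nonpositive radial curvature from $q_0$) feeds into a Hessian comparison: $\bar{\hs}\,\bar r \ge \frac1r(\bar g - d\bar r\otimes d\bar r)$ in the support sense, which forces $\tr\{Y\mapsto\Phi(A_Y)\}\ge \frac1r(\tr\Phi - \lan\Phi\na r,\na r\ran)\ge 0$ since $\Phi$ is positive-semidefinite and $|\na r|\le 1$.

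Next I would combine this with the hypothesis $|H_\Phi+\dv\Phi|\le \frac{1}{r+\ep}\tr\Phi$ and $|\na r|\le 1$ to get the pointwise bound
\begin{equation*}
\dv Z \ge -\frac{1}{r+\ep}\tr\Phi + \frac{1}{r}\big(\tr\Phi - \lan\Phi\na r,\na r\ran\big)\ge -\frac{1}{r+\ep}\tr\Phi,
\end{equation*}
but this crude estimate is not quite enough; the point of the $\ep$ is to beat the comparison term. A cleaner route: work with $W=\frac{1}{r+\ep}\Phi(\nabla r)$ (smoothing $r$ if necessary, or using Calabi's trick to handle the cut locus of $\bar r$, which is where the main technical care is needed). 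Then $\dv W = \frac{1}{r+\ep}\dv Z - \frac{1}{(r+\ep)^2}\lan\Phi\na r,\na r\ran\ge \frac{1}{r+\ep}\lan H_\Phi+\dv\Phi,\na r\ran + \frac{1}{(r+\ep)^2}\big(\frac{r+\ep}{r}\tr\Phi - \frac{r+\ep}{r}\lan\Phi\na r,\na r\ran - \lan\Phi\na r,\na r\ran\big)$. Using $|\lan\Phi\na r,\na r\ran|\le \tr\Phi$ (positive-semidefiniteness plus $|\na r|\le 1$) and the hypothesis to control the first term by $-\frac{1}{(r+\ep)^2}\tr\Phi$, one checks the negative contributions cancel and a definite positive multiple of $\frac{\ep}{r(r+\ep)^2}\tr\Phi$, or at least $\frac{c}{(r+\ep)^2}\tr\Phi$ for small $r$, survives — the precise bookkeeping here is the heart of the argument and the step I expect to be most delicate.

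Once a pointwise inequality of the form $\dv W \ge \frac{c_0}{(r+\ep)^2}\tr\Phi$ (or with $r+\ep$ replaced appropriately) is in hand, integrate over $B_\mu(q_0)$ and apply the divergence theorem: $\int_{\p B_\mu}\lan W,\nu\ran \ge c_0\int_{B_\mu}\frac{\tr\Phi}{(r+\ep)^2}$. On $\p B_\mu$ we have $r\le \mu$ and $|\lan W,\nu\ran|\le \frac{1}{\mu+\ep}\tr\Phi$, while the coarea formula gives $\int_{B_\mu}\frac{\tr\Phi}{(r+\ep)^2}$ in terms of $\varphi'$ and $\varphi$. This yields a differential inequality of the shape $\varphi'(\mu)\ge \frac{c_0}{\mu+\ep}\varphi(\mu) - (\text{lower order})$, i.e. roughly $(\,(\mu+\ep)^{-c_0}\varphi\,)'\ge 0$ after an integrating factor; since $\varphi(\mu)>0$ for $\mu$ large (because $\tr\Phi(q_0)>0$ and $\tr\Phi$ is continuous), we conclude $\varphi(\mu)\ge C(\mu+\ep)^{c_0}$, and in the borderline exponent case one gets $\varphi(\mu)\gtrsim \log\mu$, which gives exactly $\liminf_{\mu\to\infty}\varphi(\mu)/\log\mu>0$ and in particular $\int_M\tr\Phi=\infty$. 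The only remaining issue is justifying the divergence theorem on the non-smooth $\p B_\mu$ and across the cut locus of $\bar r$; this is handled by Calabi's trick (replacing $\bar r$ by $\bar r+\eta$ supported slightly off the cut locus) together with Sard's theorem for a.e. regular value $\mu$, so that no genuine obstruction arises there.
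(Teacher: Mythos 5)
Your overall framework --- choose a radial vector field, take its divergence, control it pointwise via the Hessian comparison theorem and the hypothesis on $H_\Phi+\dv\Phi$, then integrate via the divergence theorem and coarea to obtain a scalar differential inequality for $\varphi(\mu)=\int_{B_\mu}\tr\Phi$ --- is exactly what the paper does. The gap is in the choice of radial weight, the very step you flagged as ``the one I expect to be most delicate''; it is not merely delicate, it fails.

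You take $W=\frac{1}{r+\ep}\Phi(\na r)$, a \emph{decaying} weight. Write $p=\lan\Phi\na r,\na r\ran$, so $0\le p\le\tr\Phi$. Using the identity $\dv_M(\Phi\na r)=\m D_\Phi(\bar\na r)+\lan H_\Phi+\dv\Phi,\bar\na r\ran$, the Hessian comparison $\m D_\Phi(\bar\na r)\ge\frac1r(\tr\Phi-p)$, and the hypothesis $|H_\Phi+\dv\Phi|\le\frac{\tr\Phi}{r+\ep}$, the best lower bound available is
$$\dv_M W\ \ge\ -\frac{p}{(r+\ep)^2}+\frac{\tr\Phi-p}{r(r+\ep)}-\frac{\tr\Phi}{(r+\ep)^2}.$$
If $\Phi$ is (nearly) rank one with eigenvector aligned with $\na r$ --- a perfectly admissible positive-semidefinite $\Phi$ --- then $p\approx\tr\Phi$, the Hessian-comparison gain vanishes, and the right side collapses to $\approx-2\tr\Phi/(r+\ep)^2<0$. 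So your claimed pointwise bound $\dv_M W\ge c_0\tr\Phi/(r+\ep)^2$ is false. The decaying weight is doubly harmful: it kills the comparison gain and adds a \emph{new} negative term of the same size, since $\big(\frac{1}{r+\ep}\big)'p<0$. The boundary integral $\int_{\p B_\mu}\frac{1}{r+\ep}\lan\Phi\na r,\nu\ran$ is also awkward to relate to $\varphi'(\mu)$, since $\frac{1}{r+\ep}\ge\frac{1}{\mu+\ep}$ on $\p B_\mu$, which points the wrong way.

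The fix, and what the paper does (Proposition~\ref{hessiana-K} and Theorem~\ref{theo-al} with $h(t)=t$, $\al(t)=\frac{1}{t+\ep}$), is to use the \emph{growing} weight $h(r)=r$, i.e.\ the field $r\,\Phi(\na r)$. Because $h'>0$, the chain-rule term $h'(r)\,p$ enters with the \emph{correct} sign and cancels the $-p$ from the Hessian comparison exactly, giving the clean bound $\m D_\Phi(r\bar\na r)\ge h'(r)\tr\Phi=\tr\Phi$ with no alignment hypothesis on $\Phi$. The hypothesis then contributes $r\,|H_\Phi+\dv\Phi|\le\frac{r}{r+\ep}\tr\Phi<\tr\Phi$, so $\dv_M(r\Phi\na r)\ge\frac{\ep}{r+\ep}\tr\Phi>0$, and the boundary term is controlled by $r\lan\Phi\na r,\nu\ran\le\mu\,\tr\Phi$ on $\p B_\mu$. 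The ODE step you describe then goes through (the paper runs it for $\ln G$ with $G(\mu)=\int_0^\mu\int_{\p B_\tau}\big(\frac{h'}{h}-\al\big)h\,\tr\Phi$), yielding $\varphi(\mu)\gtrsim\log\mu$.

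A secondary remark: Calabi's trick for the cut locus of $\bar r$ is unnecessary here. Completeness, simple connectivity, and nonpositive radial curvature from $q_0$ force $q_0$ to be a pole of $\bar M$, so $\bar r$ is smooth away from $q_0$; the only irregularity is $\p B_\mu$ in $M$, handled by Sard's theorem and the coarea formula as you say.
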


Before we enunciate the next results, we will to consider a consequence of Theorem \ref{thm_asz}. Let $M$ be a manifold with nonpositive radial curvature with respect to some base point $q_0$. It is easy to show that the radial curvature of the product manifold $\bar M=M\times \real$ with respect to the base point $(q_0,0)$ is also nonnegative. Furthermore, the inclusion map $j:M\to \bar M$ given by $j(x)=(x,0)$ is a totally geodesic embedding. Thus the result below follows directly from Theorem \ref{thm_asz}.
\begin{corollary} Let $M$ be a complete simply-connected manifold with nonpositive radial curvature with respect to some base point $q_0$. Let $\Phi:TM\to TM$ be a positive-semidefinite symmetric operator such that $\tr\Phi(q_0)>0$. Assume that  $$|\dv\Phi|\leq \frac{1}{r+\ep}\tr\Phi,$$ for some $\ep>0$, where $r=d_{\bar M}(\cdot\,,q_0)$. Then the rate of growth of the integral $\int_M \tr\Phi$ is at least logarithmic with respect to the geodesic balls centered at $q_0$. In particular, the integral $\int_M \tr\Phi$ is infinite.
\end{corollary}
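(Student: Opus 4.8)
The plan is to derive this corollary from Theorem~\ref{thm_asz} by realizing $M$ as a totally geodesic submanifold of the product $\bar M=M\times\real$. Concretely, I would put the product metric on $\bar M$, fix the base point $\bar q_0=(q_0,0)$, and use the inclusion $j:M\to\bar M$, $j(x)=(x,0)$, which is an isometric embedding onto the slice $M\times\{0\}$. Completeness and simple connectivity of $\bar M$ are inherited from the two factors, so the only ambient hypothesis of Theorem~\ref{thm_asz} that needs checking is that $\bar M$ has nonpositive radial curvature with respect to $\bar q_0$.

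That curvature check is, I expect, the only real point of the argument; everything else is bookkeeping. A geodesic of $\bar M$ issuing from $\bar q_0$ has the form $\bar\gamma(t)=(\gamma(at),bt)$ with $\gamma$ a unit-speed geodesic of $M$ starting at $q_0$ and $a^{2}+b^{2}=1$, so its $M$-component at parameter $t$ is a multiple of the radial direction $\dot\gamma(at)$. Since the curvature tensor of a Riemannian product splits, for any $2$-plane $\Pi$ spanned by $\dot{\bar\gamma}(t)$ and a vector $V=(V_1,v_2)$ the numerator of the sectional curvature of $\Pi$ equals $a^{2}\lan R^{M}(\dot\gamma,V_1)V_1,\dot\gamma\ran$, which is $\le 0$ because that $M$-plane contains the radial direction $\dot\gamma(at)$ and $M$ has nonpositive radial curvature at $q_0$ (the case $a=0$ giving curvature $0$). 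Hence every plane containing $\dot{\bar\gamma}(t)$ has sectional curvature $\le0$, which is exactly the required radial curvature bound for $\bar M$.

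The remaining steps are immediate. The embedding $j$ is totally geodesic, so its second fundamental form vanishes and therefore $H_\Phi=\tr\{(X,Y)\mapsto\II(\Phi X,Y)\}\equiv 0$ for the immersion $j$; thus the hypothesis $|\dv\Phi|\le\frac{1}{r+\ep}\tr\Phi$ is literally the inequality $|H_\Phi+\dv\Phi|\le\frac{1}{r+\ep}\tr\Phi$ of Theorem~\ref{thm_asz}. Moreover, in a Riemannian product $d_{\bar M}\big((x,0),(q_0,0)\big)=d_M(x,q_0)$, so the function $r$ here coincides with $d_{\bar M}(\cdot\,,\bar q_0)$ along $j(M)$ and the geodesic balls of $M$ centered at $q_0$ are precisely those named in Theorem~\ref{thm_asz}. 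Since $\Phi$ is a positive-semidefinite symmetric operator of class $C^{1}$ with $\tr\Phi(q_0)>0$, Theorem~\ref{thm_asz} applies to $j$ and gives $\liminf_{\mu\to\infty}\frac{1}{\log(\mu)}\int_{B_\mu(q_0)}\tr\Phi>0$, and in particular $\int_M\tr\Phi=\infty$.
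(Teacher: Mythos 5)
Your proof is correct and follows precisely the route the paper indicates in the paragraph preceding the corollary: pass to the product $\bar M=M\times\real$ with base point $(q_0,0)$, verify that it inherits completeness, simple connectivity, and nonpositive radial curvature, observe that the totally geodesic inclusion $j$ makes $H_\Phi\equiv0$, and invoke Theorem~\ref{thm_asz}. Your verification of the radial curvature bound for the product (via the splitting of the curvature tensor and the factor $a^2$) fills in the step the paper dismisses as easy, and it is accurate.
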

The next theorem says the following:
\begin{theorem}\label{cor-asz-phi-min} Let $f:M\to \bar M$ be an isometric immersion of a complete noncompact manifold $M$ in a Hadamard manifold $\bar M$. Let $\Phi:TM\to TM$ be a positive-semidefinite symmetric operator of class $C^1$. Assume that $$\dv\Phi=H_{\Phi}=0.$$ Then, for all $q\in M$, we have that either $\Phi(q)=0$ or the rate of growth of the integral $\int_M\tr\Phi$ is at least linear with respect to the geodesic balls of $M$ centered at $q$, that is, $$\liminf_{\mu\to \infty}\frac{1}{\mu}\int_{B_\mu(q)}\tr\Phi >0,$$ where $B_\mu(q)$ denotes the geodesic ball of $M$ of radius $\mu$ and centered at $q$. In particular, either $\Phi$ vanishes identically or the integral $\int_M\tr\Phi$ is infinite.
\end{theorem}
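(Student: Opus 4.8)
The plan is to derive Theorem \ref{cor-asz-phi-min} as a limiting/localized version of the mechanism underlying Theorem \ref{thm_asz}, the essential new input being that in a Hadamard manifold one can use the intrinsic distance function on $M$ rather than the restriction of the ambient distance, and exploit the fact that the Hessian comparison gives a genuinely \emph{unbounded} lower bound on $\hs\, r$ (growing like $1/r$ near the origin but bounded below by a positive constant away from it), which is what upgrades the logarithmic rate to a linear one. Concretely, I would fix $q\in M$ with $\Phi(q)\neq 0$, set $r=d_{\bar M}(\cdot,q)$ (noting $q=q_0\in f(M)$ is a legitimate base point and $\bar M$ has nonpositive curvature), and compute the divergence on $M$ of the tangential vector field $V=\Phi(\na^{\bar M} r)^\top$, or more precisely of $\phi(r)\,\Phi(\na r)^\top$ for a suitable cutoff-type profile $\phi$. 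The pointwise identity one needs is the same one behind Theorem \ref{thm_asz}: $\dv_M\big(\Phi \na^\top r\big) = \lan \hs\, r, \Phi\ran + \lan \na^{\bar M} r, H_\Phi + \dv\Phi\ran^{\dots}$, where the term $\lan H_\Phi+\dv\Phi,\na r\ran$ vanishes identically by hypothesis, leaving $\dv_M(\Phi\na^\top r)=\lan \hs_{\bar M} r|_{TM},\Phi\ran \ge c(r)\,\tr\Phi$ by the Hessian comparison theorem together with positive-semidefiniteness of $\Phi$.

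Having this, the argument proceeds by integrating over intrinsic geodesic balls $B_\mu=B_\mu(q)$ and applying the divergence theorem. Multiplying by a radial profile $\phi(r)$ and integrating, one gets
\begin{equation*}
\int_{B_\mu} \phi(r)\,\dv_M(\Phi\na^\top r)\, dM + \int_{B_\mu}\phi'(r)\,\lan \Phi\na^\top r,\na^\top r\ran\, dM = \int_{\p B_\mu}\phi(r)\,\lan \Phi\na^\top r,\nu\ran,
\end{equation*}
where $\nu$ is the outward intrinsic unit normal. Choosing $\phi$ nondecreasing makes the second term on the left nonnegative (using $\Phi\ge 0$), and bounding $|\lan\Phi\na^\top r,\nu\ran|\le \tr\Phi$ on the boundary (since $|\na^\top r|\le 1$ and $\Phi$ is positive-semidefinite with operator norm controlled by the trace) one obtains a differential inequality in $\mu$ of the form $g(\mu)\le C\, g'(\mu)$ for $g(\mu)=\int_{B_\mu}\tr\Phi$, after absorbing the lower bound $c(r)\tr\Phi$ and handling the region near $q$ (where $c(r)\sim 1/r$ blows up but is still integrable against $\tr\Phi$ for the constant $\tr\Phi(q)>0$). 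Integrating this ODE inequality à la Gromov forces $g(\mu)\ge a\mu + b$ for constants $a>0$, which is exactly the claimed linear rate; and linear growth certainly implies $\int_M\tr\Phi=\infty$.

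The main obstacle I anticipate is the same one that must already be handled in the proof of Theorem \ref{thm_asz}, namely the interplay between the \emph{ambient} distance $r$ and the \emph{intrinsic} balls $B_\mu(q)$ over which we integrate: the coarea/divergence argument wants $r$ to be proper and its level sets to foliate things nicely, but $r\circ f$ need not be proper on $M$ unless one knows $f(B_\mu(q))\subset \bar B_\mu(q_0)$, which does hold since $f$ is distance-nonincreasing between intrinsic and ambient distances — this containment is the lemma I would isolate first. The second delicate point is that the Hessian comparison lower bound $c(r)$ degenerates at $r=0$; one resolves this by noting $\tr\Phi$ is continuous and positive at $q$, so on a small ball $B_{\rho_0}(q)$ we have $\tr\Phi\ge \tfrac12\tr\Phi(q)>0$, and the singular integral $\int_{B_{\rho_0}}\tfrac{c}{r}\,\tr\Phi\,dM$ contributes a strictly positive (finite) constant that seeds the ODE inequality — it is precisely this seed, absent when $\Phi(q)=0$, that accounts for the dichotomy in the statement. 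Everything else (the choice of $\phi$, e.g.\ $\phi\equiv 1$ may already suffice once $c(r)$ is bounded below by a constant for $r\ge\rho_0$, and the final Gromov-type integration) is routine.
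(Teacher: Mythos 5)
Your skeleton --- integrate $\dv_M\bigl(\phi(r)\,\Phi\nabla^\top r\bigr)$ over intrinsic geodesic balls, invoke the Hessian comparison plus positive-semidefiniteness, use $r\leq\rho$ and $\langle\Phi\nabla^\top r,\nu\rangle\leq\tr\Phi$ on $\partial B_\mu(q)$, and close a Gromov-type ODE --- is the same mechanism the paper runs through Theorem \ref{theo-al} with $\m K\equiv 0$, $\al\equiv 0$, $h(t)=t$. But the heart of your version rests on a false curvature premise, and as a result both the weight $\phi$ you lean on and the differential inequality you write down are wrong.

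In a Hadamard manifold the Hessian comparison gives only $\hs_{\bar M}r\geq\tfrac{1}{r}\bigl(\bar g-dr\otimes dr\bigr)$, which \emph{decays to zero} at infinity; it is not ``bounded below by a positive constant away from the origin'' (that would require $\bar K\leq -c^2<0$, which is the regime of Theorem \ref{thm_majorar}, not this one). So $\phi\equiv 1$ can fail outright: it yields only $\int_{B_\mu}\tfrac{1}{r}\bigl(\tr\Phi-\langle\nabla^\top r,\Phi\nabla^\top r\rangle\bigr)\leq g'(\mu)$, and the integrand vanishes identically if, say, $M$ is a geodesic line through $q$ and $\Phi$ is a positive constant multiple of $\id$ (then $\nabla^\top r$ is a unit eigenvector of $\Phi$), so nothing is proved. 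The weight you actually need is $\phi(r)=r$: it cancels the $1/r$ decay exactly, and Proposition \ref{hessiana-K} with $h(r)=r$ gives the clean pointwise bound $\dv_M\bigl(r\,\Phi\nabla^\top r\bigr)\geq\tr\Phi$ once $H_\Phi+\dv\Phi=0$. This produces $g(\mu)\leq\mu\,g'(\mu)$, not $g(\mu)\leq C\,g'(\mu)$ with a constant $C$ --- the latter would force exponential growth, which is false already for $\Phi=I$ in $\mathbb R^m$ (polynomial growth). Integrating $g'/g\geq 1/\mu$ gives exactly the claimed linear rate $g(\mu)\geq\Lambda\mu$. Finally, what upgrades Theorem \ref{thm_asz}'s logarithmic rate to a linear one here is not a sharper Hessian estimate (it is identical in both theorems) but the strengthening of the hypothesis from $|H_\Phi+\dv\Phi|\leq\tfrac{1}{r+\ep}\tr\Phi$ --- costing the factor $e^{-\int\al}\sim\mu^{-1}$ in the paper's Theorem \ref{theo-al} --- to $H_\Phi+\dv\Phi=0$, i.e.\ $\al\equiv 0$.
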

The following result is a non-direct application of Theorem \ref{cor-asz-phi-min}. It will be proved in section \ref{sec-the-al} below.

\begin{corollary}\label{corol-dist} Let $f:M\to \bar M$ be an isometric immersion of a complete noncompact manifold $M$ in a complete simply-connected manifold $\bar M$ with nonpositive radial curvature with respect to some base point in $f(M)$. Let $\m D$ be a $k$-dimensional integrable distribution on $M$, with $k\geq 1$, such that each integral of $\m D$ is a minimal submanifold in $\bar M$. Then the rate of volume growth of $M$ is at least linear.
\end{corollary}
To state our next applications of Theorems \ref{thm_asz} and \ref{cor-asz-phi-min} we need to recall some notations. Let $B:TM\to TM$ be a symmetric operator of class $C^1$. We recall that $B$ satisfies the Codazzi equation if the following holds:
\begin{equation*}
(\na_X B)Y=(\na_Y B)X,
\end{equation*}
for all tangent vector fields $X$ and $Y$ on $M$. The Newton operators $P_j=P_j(B)$, $j=1,\ldots,m$, associated to $B$, are the symmetric operators on $M$ defined inductively by:
\begin{equation*}
\begin{array}{l}
P_0=I;\\
P_{j}=S_jI-BP_{j-1}, \mbox{ with } j\geq 1,
\end{array}
\end{equation*}
where $S_j=S_j(B)=\sum_{i_1<\ldots<i_r} \la_{i_1}\ldots \la_{i_j}$ is the jth-elementary symmetric polynomial evalued on the eigenvalues $\la_1,\ldots, \la_m$ of $B$. 

A result due to Alencar, Santos and Zhou \cite{asz} says the following:
\begin{theoremA}[Corollary 2.2 of \cite{asz}] Let $f:M^m\to Q_c^{m+1}$ be a noncompact properly immersed hypersurface in a space form $Q_c^{m+1}$, with $c\leq 0$. Let $P_j=P_j(A)$, $j=1,2,\ldots$, be the Newton operators associated to the shape operator $A$ of the immersion $f$. Assume that $$S_j\geq 0 \ \mbox{ and } \ S_{j+1}=0,$$ for some $1\leq j\leq m-1$. Then either $S_j\equiv 0$ or the integral $\int_M S_j$ is infinite.
\end{theoremA}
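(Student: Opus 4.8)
The plan is to deduce Theorem A from Theorem \ref{cor-asz-phi-min} by taking $\Phi = P_j(A)$, the $j$-th Newton operator associated to the shape operator $A$. There are three things to verify: that $P_j$ is positive-semidefinite under the hypothesis $S_j\geq 0$, $S_{j+1}=0$; that $\tr P_j$ is (up to a positive constant) equal to $S_j$; and that the two divergence-type quantities $\dv P_j$ and $H_{P_j}$ both vanish. The second point is the standard identity $\tr P_j = (m-j)S_j$, which is immediate from the definition $P_j = S_j I - A P_{j-1}$ together with the recursion $\tr(AP_{j-1}) = j S_j$ (proved by induction using $S_j = \frac{1}{j}\tr(A P_{j-1})$ and $P_{j-1} = S_{j-1}I - AP_{j-2}$). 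So $\int_M \tr P_j = (m-j)\int_M S_j$, and since $1\leq j\leq m-1$ the constant $m-j$ is positive; thus $\int_M S_j = \infty$ iff $\int_M \tr P_j = \infty$, and $S_j(q)=0$ iff $\tr P_j(q) = 0$. Because $P_j$ is positive-semidefinite, $\tr P_j(q)=0$ forces $P_j(q)=0$, matching the dichotomy in Theorem \ref{cor-asz-phi-min}.

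The positivity of $P_j$ is a classical fact of G\aa rding / Hardy--Littlewood--P\'olya type: if $S_{j+1}=0$ and $S_j\geq 0$, then at each point the eigenvalues of $A$ lie in the closure of a G\aa rding cone, and in that situation all the Newton tensors $P_0,\ldots,P_j$ are positive-semidefinite. I would cite the standard references (e.g. the work of Barbosa--Colares, or Hounie--Leite, on ellipticity of higher-order mean curvature equations) rather than reprove it. Alternatively one can argue pointwise: vanishing of $S_{j+1}$ means at most $j$ of the $\la_i$ are nonzero, and $S_j\geq 0$ forces those to be, say, all nonnegative (or all nonpositive), from which $P_j = \mathrm{diag}(\,\prod_{k\neq i}(\text{relevant factors})\,)$ is seen to have nonnegative entries.

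The key structural input — and the step I expect to be the real heart of the matter — is that $\dv P_j = 0$ and $H_{P_j} = 0$. The first follows from the classical fact that when $A$ satisfies the Codazzi equation (which the shape operator of a hypersurface in a space form does, by Codazzi's equation with vanishing ambient curvature term), the Newton operators are divergence-free: $\dv P_j = 0$. This is standard and goes back to Reilly; it is proved by induction on $j$ using $\dv P_j = S_j{}_{,\cdot} - \dv(AP_{j-1})$ and a Codazzi-type cancellation. For $H_{P_j} = 0$: since the ambient space is a space form, $\II(X,Y) = \langle AX, Y\rangle\,\nu$ for a (locally, or globally if orientable) chosen unit normal $\nu$, so
\begin{equation*}
H_{P_j} = \left(\tr\{(X,Y)\mapsto \langle A P_j X, Y\rangle\}\right)\nu = \tr(AP_j)\,\nu = (j+1)S_{j+1}\,\nu = 0,
\end{equation*}
using $S_{j+1}=0$. (If $M$ is non-orientable one passes to the orientable double cover; infinite volume there gives infinite volume of $M$, and $S_j$ is independent of the choice of $\nu$ since it is even... more precisely $S_j$ changes sign with $\nu$ for odd $j$ but its vanishing and the finiteness of $\int|S_j|$ are unaffected, and one works with $|S_j|$ or notes the covering pulls back $P_j$ up to sign.) With all hypotheses of Theorem \ref{cor-asz-phi-min} in place — completeness and noncompactness of $M$ (from proper immersion), Hadamard ambient (a space form $Q_c^{m+1}$ with $c\leq 0$ is Hadamard), $\Phi=P_j$ positive-semidefinite $C^1$, $\dv\Phi = H_\Phi = 0$ — the conclusion "$\Phi\equiv 0$ or $\int_M\tr\Phi=\infty$" translates directly into "$S_j\equiv 0$ or $\int_M S_j=\infty$", which is Theorem A. \cqd
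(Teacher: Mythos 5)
Your proposal follows the same route as the paper's proof of Theorem \ref{cor-asz2}, which the authors present as the sharpening of Theorem A: apply Theorem \ref{cor-asz-phi-min} with $\Phi = P_j(A)$, use $\dv P_j = 0$ (the paper's Lemma \ref{dvPj=0}, via Codazzi and Reilly's identity $\tr(P_{j-1}(\na_X A)) = \lan \na S_j, X\ran$), use $H_{P_j} = \tr(AP_j)\,\nu = (j+1)S_{j+1}\,\nu = 0$, and use $\tr P_j = (m-j)S_j$ together with the observation that for a positive-semidefinite operator $\tr P_j(q)=0$ iff $P_j(q)=0$. The one place where your write-up is not sound is the justification that $P_j$ is positive-semidefinite. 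The alternative pointwise argument you sketch is false: $S_{j+1}=0$ does not force at most $j$ of the $\la_i$ to be nonzero (for $m=3$, $j=1$, take $\la_1=\la_2=1$, $\la_3=-\tfrac12$; then $S_2=0$ yet all three eigenvalues are nonzero). The G\aa rding-cone citation is also not quite the right tool: membership in the closure of the $j$-th G\aa rding cone amounts to $S_1\ge 0,\ldots,S_j\ge 0$, which is more information than the hypothesis $S_j\ge 0$, $S_{j+1}=0$ gives you. What is actually true, and what the paper uses, is Lemma \ref{Pj-semidefinite} (Proposition 2.4 of \cite{asz}): $S_{j+1}=0$ alone already forces $P_j$ to be semidefinite at each point, with no condition on the lower $S_i$; then $\tr P_j = (m-j)S_j \ge 0$ pins down the sign, since a semidefinite operator with nonnegative trace is positive-semidefinite. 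Replace the G\aa rding appeal with that two-step argument and your proof coincides with the paper's.
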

Actually, under the hypotheses of Theorem A, Alencar, Santos and Zhou \cite{asz} proved that, for each $q\in M$, it holds that either $S_j(q)=0$ or the rate of growth of the integral $\int_M S_j$ is at least linear with respect to the geodesic balls of $M$ centered at $q$. 

The result below is a non-direct consequence of Theorem \ref{thm_asz}. It will be proved in section \ref{sec-the-al}.

\begin{theorem}\label{cor-asz} Let $f:M^m\to \bar M$ be a complete non-compact isometric immersion in a complete simply-connected manifold $\bar M$ with nonpositive radial curvature with respect to some base point $q_0\in f(M)$. Let $P_j=P_j(B)$, $j=1,2,\ldots$, be the Newton operators associated to a symmetric operator $B:TM\to TM$ that satisfies the Codazzi equation. Assume that $$S_{j+1}(B)= 0, \ \ S_j(B(q_0))\neq 0 \ \mbox{ and } \  S_j(B) \mbox{ does not change of sign,}$$ for some $1\leq j\leq m-1$. Assume further that the $P_j$-mean curvature vector satisfies $$|H_{P_j}|\leq \frac{1}{r+\ep},$$ where $r=d_{\bar M}(\cdot\,,q_0)$. Then the rate of growth of the integral $\int_M |S_j(B)|$ is at least logarithmic with respect to the geodesic balls of $M$ centered at $q_0$.
\end{theorem}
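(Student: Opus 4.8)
The plan is to deduce the theorem from Theorem~\ref{thm_asz} by applying the latter to a sign-normalized Newton operator. Since $S_j(B)$ does not change sign on $M$ and $S_j(B(q_0))\ne 0$, exactly one of the alternatives ``$S_j(B)\ge 0$ on $M$'' and ``$S_j(B)\le 0$ on $M$'' holds; accordingly I would set $\Phi:=P_j$ in the first case and $\Phi:=-P_j$ in the second, where $P_j=P_j(B)$. From $P_0=I$ and $P_j=S_jI-BP_{j-1}$ an easy induction gives $\tr P_j=(m-j)\,S_j(B)$, so $\Phi$ is a symmetric operator of class $C^1$ with $\tr\Phi=(m-j)\,|S_j(B)|\ge 0$ and $\tr\Phi(q_0)=(m-j)\,|S_j(B(q_0))|>0$. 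The proof then comes down to checking: (i) $\Phi\ge 0$; (ii) $\dv\Phi=0$; (iii) $|H_\Phi+\dv\Phi|\le(r+\ep)^{-1}\tr\Phi$. Granting (i)--(iii), Theorem~\ref{thm_asz} yields $\liminf_{\mu\to\infty}(\log\mu)^{-1}\int_{B_\mu(q_0)}\tr\Phi>0$, and dividing by the constant $m-j$ gives precisely the asserted logarithmic lower bound for the growth of $\int_M|S_j(B)|$.

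The crux is (i), a pointwise inequality for elementary symmetric functions. Fix $p\in M$; let $\la_1,\dots,\la_m$ be the eigenvalues of $B$ at $p$, and recall that in the corresponding eigenbasis $P_j$ is diagonal with entries $\si_i:=S_j(\widehat{\la}_i)$, where $\widehat{\la}_i$ is the list $\la_1,\dots,\la_m$ with $\la_i$ removed. I would combine the identities $S_j(B)=\si_i+\la_iS_{j-1}(\widehat{\la}_i)$ and $S_{j+1}(B)=S_{j+1}(\widehat{\la}_i)+\la_i\si_i$ with the hypothesis $S_{j+1}(B)=0$ to eliminate $\la_i$, producing
\begin{equation*}
S_{j-1}(\widehat{\la}_i)\,S_{j+1}(\widehat{\la}_i)=\si_i^2-\si_i\,S_j(B),\qquad i=1,\dots,m.
\end{equation*}
When $1\le j\le m-2$, Newton's inequality for the $m-1$ real numbers $\widehat{\la}_i$ gives $S_{j-1}(\widehat{\la}_i)\,S_{j+1}(\widehat{\la}_i)\le\ka\,\si_i^2$ with $\ka=\binom{m-1}{j-1}\binom{m-1}{j+1}/\binom{m-1}{j}^{2}<1$ by log-concavity of the binomial coefficients, so the displayed identity forces $\si_i\,S_j(B)\ge(1-\ka)\,\si_i^2\ge 0$; when $j=m-1$ one has $S_{j+1}(\widehat{\la}_i)=0$ identically, the identity reads $\si_i(\si_i-S_j(B))=0$, and again $\si_i\,S_j(B)\ge 0$. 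In every case each eigenvalue $\si_i$ of $P_j$ has the sign of $S_j(B)$ and vanishes where $S_j(B)=0$. Hence $\Phi\ge 0$ on $M$ and $\Phi\equiv 0$ on $\{S_j(B)=0\}$, so $\Phi$ is an admissible operator for Theorem~\ref{thm_asz}. (This positivity of the Newton operator under the single scalar condition $S_{j+1}(B)=0$ is in the spirit of the lemma used in~\cite{asz}, and I would either invoke it from there or record the short argument above.)

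For (ii) the Codazzi hypothesis on $B$ is exactly what is needed: it is classical (Reilly; Rosenberg) that if $B$ satisfies $(\na_XB)Y=(\na_YB)X$ then its Newton operators are divergence-free, $\dv P_j=0$; I would recall the inductive proof via $P_j=S_jI-BP_{j-1}$, the key point being that Codazzi makes $\na S_r$ a divergence for each $r$. Then $\dv\Phi=0$, so $H_\Phi+\dv\Phi=\pm H_{P_j}$ and $|H_\Phi+\dv\Phi|=|H_{P_j}|$; thus (iii) reads $|H_{P_j}|\le(r+\ep)^{-1}\tr\Phi$, which is the standing hypothesis on $|H_{P_j}|$, and Theorem~\ref{thm_asz} now applies and closes the argument. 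The genuine obstacle is (i) — extracting positive-semidefiniteness of the operator $P_j$ from the lone scalar relation $S_{j+1}(B)=0$ together with the constant sign of $S_j(B)$; by comparison, steps (ii) and (iii) are routine once the divergence-free property of Newton operators of Codazzi tensors is in hand.
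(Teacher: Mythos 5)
Your argument follows the paper's route closely: set $\Phi=\pm P_j(B)$ according to the fixed sign of $S_j(B)$, check positive-semidefiniteness, use the Codazzi hypothesis to get $\dv\Phi=0$, and invoke Theorem~\ref{thm_asz}; this is exactly the paper's reduction. Where you diverge is the positivity step. The paper quotes Lemma~\ref{Pj-semidefinite} (deferred to Proposition~2.4 of~\cite{asz}) to get that $P_j$ is pointwise semidefinite, and then pins down the sign via $\tr P_j=(m-j)S_j$ together with the constant sign of $S_j$. You instead eliminate $\la_i$ from the two cofactor expansions $S_j(B)=\si_i+\la_iS_{j-1}(\widehat{\la}_i)$ and $0=S_{j+1}(B)=S_{j+1}(\widehat{\la}_i)+\la_i\si_i$ to obtain $S_{j-1}(\widehat{\la}_i)S_{j+1}(\widehat{\la}_i)=\si_i^2-\si_iS_j(B)$, then apply Newton's inequality for the $m-1$ real numbers $\widehat{\la}_i$ (valid for $1\le j\le m-2$, with the degenerate case $j=m-1$ handled separately) to bound the left side by $\ka\,\si_i^2$ with $\ka<1$; this forces $\si_iS_j(B)\ge(1-\ka)\si_i^2\ge0$. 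That is a clean, self-contained proof of the paper's Lemma~\ref{Pj-semidefinite}, and it is in fact slightly sharper: it determines the sign of each eigenvalue $\si_i$ of $P_j$ at every point without having to go through the trace identity. The divergence-free step is the same as the paper's Lemma~\ref{dvPj=0}, via Reilly's identity for Codazzi tensors.

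One remark that applies equally to your write-up and to the paper's: the stated hypothesis $|H_{P_j}|\le(r+\ep)^{-1}$ is not literally the condition $|H_\Phi+\dv\Phi|\le(r+\ep)^{-1}\tr\Phi$ required by Theorem~\ref{thm_asz}, since $\tr\Phi=(m-j)|S_j(B)|$ need not be bounded below by $1$. Your sentence ``which is the standing hypothesis on $|H_{P_j}|$'' therefore glosses over the same normalization the paper does; the version under which both arguments run verbatim is $|H_{P_j}|\le(r+\ep)^{-1}\,\tr P_j$. Aside from this inherited point, your proposal is correct and is essentially the paper's proof, augmented by an independent and somewhat tighter proof of the key positivity lemma.
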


Note that if $f:M^m\to Q_c^{m+1}$ is a hypersurface and $A:TM\to TM$ is the shape operator then the Newton operator $P_j=P_j(A)$ satisfies $$H_{P_j}=\tr(AP_j)=(j+1)S_{j+1}.$$ 
Thus the result below improves Theorem A and it follows as a consequence of Theorem \ref{cor-asz-phi-min}.

\begin{theorem}\label{cor-asz2} Let $f:M^m\to Q^{m+1}$ be a complete non-compact hypersurface $M$ in a space form $Q_c^{m+1}$, with $c\leq 0$. Let $P_j=P_j(A)$, $j=1,2,\ldots$, be the Newton operators associated to the shape operator $A$ of the immersion $f$. Assume that $$S_{j+1}= 0 \ \mbox{ and } \  S_j \mbox{ does not change of sign,}$$ for some $1\leq j\leq m-1$. Then either the rank $\rk(A(q))\leq j-1$ or the rate of growth of the integral $\int_M |S_j|$ is at least linear with respect to the geodesic balls of $M$ centered at $q$.
\end{theorem}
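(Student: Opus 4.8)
The plan is to realize Theorem 1.6 as an instance of Theorem 1.3 applied to the operator $\Phi = P_j(A)$, so the bulk of the work is checking that the Newton operator of a hypersurface with $S_{j+1}\equiv 0$ and $S_j$ of one sign satisfies the hypotheses of Theorem 1.3. First, note that a space form $Q_c^{m+1}$ with $c\le 0$ is a Hadamard manifold, so the ambient hypothesis of Theorem 1.3 holds. Next, the key identity $H_{P_j}=\tr(AP_j)=(j+1)S_{j+1}$, recorded just before the statement, shows that the assumption $S_{j+1}\equiv 0$ forces $H_{P_j}=0$. For the divergence: since $A$ is the shape operator of a hypersurface in a space form, it satisfies the Codazzi equation, and it is classical (a computation with the inductive definition $P_j = S_j I - A P_{j-1}$, using Codazzi) that $\dv P_j = 0$ in that case — indeed in constant curvature $\dv P_j$ is a multiple of the gradient-type term that vanishes because of the Codazzi identity and the trace structure. (When $c\ne 0$ the relevant curvature correction terms also cancel by symmetry.) Hence $\dv P_j = H_{P_j} = 0$, which is exactly the hypothesis of Theorem 1.3 for $\Phi = P_j$.

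The remaining obstacle is the positive-semidefiniteness of $\Phi = P_j$, which does not hold in general — $P_j$ is positive-semidefinite only under sign conditions on the $S_i$'s. Here one uses the standard Gårding-type fact from the theory of elementary symmetric functions: if $S_{j+1}(A)\equiv 0$, then at each point the eigenvalue vector of $A$ lies in the boundary of the Gårding cone, and in that degenerate situation the rank of $A$ is at most $j$; moreover, on the open set where $\rk(A)\ge j$ (equivalently $\rk(A)=j$, i.e. exactly $j$ nonzero eigenvalues, all of the same sign once we fix the sign of $S_j$), the Newton operator $P_j$ is $\pm$ positive-definite with $\tr(\pm P_j) = (j+1)|S_j|$ up to a positive combinatorial constant. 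Replacing $A$ by $-A$ if necessary, we may assume $S_j\ge 0$, and then $\Phi := P_j(A)$ (or a fixed positive multiple of it) is positive-semidefinite with $\tr\Phi$ a positive multiple of $|S_j|$, and $\tr\Phi(q)>0$ precisely when $\rk(A(q))=j$, i.e. when $\rk(A(q))\ge j$.

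With these facts in hand the proof is short: fix $q\in M$ with $\rk(A(q))\ge j$; then $\tr\Phi(q)>0$, $\dv\Phi = H_\Phi = 0$, and $\Phi\ge 0$, so Theorem 1.3 applies and gives
\[
\liminf_{\mu\to\infty}\frac{1}{\mu}\int_{B_\mu(q)}\tr\Phi > 0.
\]
Since $\tr\Phi$ is a fixed positive multiple of $|S_j|$, this yields the claimed at-least-linear growth of $\int_{B_\mu(q)}|S_j|$. Conversely, if $\rk(A(q))\le j-1$ the first alternative in the statement holds, so the dichotomy is exactly the one asserted. The one point needing care in the write-up is the passage from "$S_{j+1}\equiv 0$ and $S_j$ does not change sign" to the pointwise structure of the eigenvalues (the rank bound and the sign/definiteness of $P_j$), together with checking $\dv P_j = 0$ in the non-flat space forms $Q_c^{m+1}$; I expect the latter to be the main technical step, handled by the same Codazzi-based computation used to prove Theorem 1.5 in Section 4.
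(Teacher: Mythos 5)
Your overall strategy is the same as the paper's: set $\Phi=\pm P_j(A)$, use $H_{P_j}=(j+1)S_{j+1}=0$ and the Codazzi identity to get $H_\Phi=\dv\Phi=0$, and invoke Theorem~\ref{cor-asz-phi-min}. However, your justification for the positive-semidefiniteness of $\pm P_j$ contains a genuine error. You claim that ``if $S_{j+1}(A)\equiv 0$ then \dots the rank of $A$ is at most $j$''. This is false: for $m=3$, $j=1$, the eigenvalue vector $(2,2,-1)$ has $S_1=3>0$, $S_2=0$ and rank $3>j$; moreover this vector lies in the closure of the G\aa rding cone $\Gamma_2$, so the G\aa rding-cone heuristic does not rescue the claim. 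The subsequent assertions built on this (that $\rk(A)\ge j$ is equivalent to $\rk(A)=j$; that the $j$ nonzero eigenvalues then ``all have the same sign''; that $P_j$ is positive-\emph{definite} there; and that $\tr P_j=(j+1)|S_j|$ rather than $(m-j)|S_j|$) are likewise incorrect. The paper does not need any rank bound on $A$: it simply cites Lemma~\ref{Pj-semidefinite} (Proposition~2.4 of~\cite{asz}), which directly gives that $S_{j+1}(T)=0$ forces $P_j(T)$ to be semidefinite, and combines this with $\tr P_j=(m-j)S_j$ and the sign hypothesis on $S_j$ to conclude $\Phi=\ep P_j\ge 0$ for some fixed $\ep\in\{-1,1\}$.

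You also leave implicit the step that converts the alternative ``$\Phi(q)=0$'' in Theorem~\ref{cor-asz-phi-min} into ``$\rk(A(q))\le j-1$''. The correct argument is not via a global rank bound but via Lemma~\ref{rkT}: $P_j(A(q))=0$ forces $S_j(A(q))=\tfrac{1}{m-j}\tr P_j(A(q))=0$, and then $S_j(A(q))=S_{j+1}(A(q))=0$ gives $\rk(A(q))\le j-1$ by that lemma; the reverse implication is immediate since at most $j-1$ nonzero eigenvalues kill every $S_j(T_k)$. You should replace the G\aa rding-cone discussion with a citation of Lemma~\ref{Pj-semidefinite} for the semidefiniteness and an explicit appeal to Lemma~\ref{rkT} for the rank dichotomy; the rest of your write-up (ambient Hadamard, $H_{P_j}=0$, $\dv P_j=0$ via Codazzi, and the application of Theorem~\ref{cor-asz-phi-min}) is sound and matches the paper.
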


Our next theorem says the following:

\begin{theorem}\label{thm_majorar} Let $f:M^m\to \bar M$ be an isometric immersion of a complete non-compact manifold $M$ in a complete simply-connected manifold $\bar M$. Assume that the radial curvature of $\bar M$ with respect to some base point $q_0\in f(M)$ satisfies $\bar K_{\rad}\leq -c^2$, for some constant $c\geq 0$. Let $\Phi:TM\to TM$ be a positive-semidefinite symmetric operator of class $C^1$ such that $\tr\Phi(q_0)>0$.  Assume that  
\begin{equation*}
|H_\Phi + \dv \Phi|\leq \frac{(m-1)c}{m} \, \tr\Phi
\end{equation*} 
Assume further that $|\Phi \na r|\leq \frac{\tr\Phi}{m}$, where $r=d_{\bar M}(\cdot\,,q_0)$ and $\na r=(\bar\na r)^T$ denotes the orthogonal projection of the gradient vector $\bar \na r$ to $TM$. Then it holds that 
\begin{equation}\label{eq-thm-majorar}
\liminf_{\mu\to \infty} \frac{\mu^{-1}}{\tr\Phi(q_0)}\int_{B_{\mu(q_0)}}\tr\Phi >0. 
\end{equation}
Moreover the limit in (\ref{eq-thm-majorar}) does not depend of $q_0$. In particular, the integral $\int_M \tr\Phi$ is infinite.
\end{theorem}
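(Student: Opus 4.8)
The plan is to establish a single pointwise divergence inequality $\dv_M(\Phi\nabla u)\geq\tr\Phi$ on $M$ for a well-chosen smooth function $u$, integrate it over the intrinsic geodesic balls, and close the argument with an ODE comparison. Fix $p_0\in f^{-1}(q_0)$ with $\tr\Phi(p_0)>0$, and write $r$ also for the function $x\mapsto d_{\bar M}(f(x),q_0)$ on $M$. Since $\bar M$ is simply connected with nonpositive radial curvature, $r^2$ extends to a smooth function on all of $\bar M$, so $u:=\tfrac m2 r^2$ is smooth on $M$ and $\nabla u=mr\,\nabla r$ with $\nabla r=(\bar\nabla r)^T$.

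To prove $\dv_M(\Phi\nabla u)\geq\tr\Phi$, work at a point with $r>0$ in an orthonormal frame $\{e_i\}$ diagonalizing $\Phi$ ($\Phi e_i=\phi_i e_i$, $\phi_i\geq0$). From $\hs_M u=m(dr\otimes dr+r\,\hs_M r)$ and the product rule for $\dv(\Phi\,\cdot\,)$,
\begin{equation*}
\dv_M(\Phi\nabla u)=mr\,\lan\dv\Phi,\bar\nabla r\ran+m\lan\Phi\nabla r,\nabla r\ran+mr\,\tr(\Phi\circ\hs_M r).
\end{equation*}
Then I would use: (i) $\hs_M r(e_i,e_i)=\bar{\hs}\,r(e_i,e_i)+\lan\bar\nabla r,\II(e_i,e_i)\ran$, so $\tr(\Phi\circ\hs_M r)=\sum_i\phi_i\,\bar{\hs}\,r(e_i,e_i)+\lan\bar\nabla r,H_\Phi\ran$; (ii) the Hessian comparison theorem, which under $\bar K_{\rad}\leq-c^2$ gives $\bar{\hs}\,r(X,X)\geq\theta_c(r)\big(|X|^2-\lan X,\bar\nabla r\ran^2\big)$ with $\theta_c(r)=c\coth(cr)$ (and $\theta_0(r)=1/r$), whence $\tr(\Phi\circ\hs_M r)\geq\theta_c(r)\big(\tr\Phi-\lan\Phi\nabla r,\nabla r\ran\big)+\lan\bar\nabla r,H_\Phi\ran$ since $\sum_i\phi_i\lan e_i,\bar\nabla r\ran^2=\lan\Phi\nabla r,\nabla r\ran$; and (iii) $\lan\dv\Phi,\bar\nabla r\ran+\lan\bar\nabla r,H_\Phi\ran=\lan H_\Phi+\dv\Phi,\bar\nabla r\ran$ ($\dv\Phi$ tangent, $H_\Phi$ normal). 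Collecting,
\begin{equation*}
\dv_M(\Phi\nabla u)\geq m\big(1-r\,\theta_c(r)\big)\lan\Phi\nabla r,\nabla r\ran+mr\,\theta_c(r)\,\tr\Phi+mr\,\lan H_\Phi+\dv\Phi,\bar\nabla r\ran.
\end{equation*}
The two hypotheses now close the estimate: $r\,\theta_c(r)=cr\coth(cr)\geq1$ (using $t\coth t\geq1$), so the coefficient of $\lan\Phi\nabla r,\nabla r\ran$ is $\leq0$; since $0\leq\lan\Phi\nabla r,\nabla r\ran\leq|\Phi\nabla r|\leq\tr\Phi/m$, that term is $\geq(1-r\,\theta_c(r))\tr\Phi$. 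Also $|\lan H_\Phi+\dv\Phi,\bar\nabla r\ran|\leq|H_\Phi+\dv\Phi|\leq\frac{(m-1)c}{m}\tr\Phi$. Substituting and using $\theta_c(r)\geq c$,
\begin{equation*}
\dv_M(\Phi\nabla u)\geq\big(1+(m-1)\,r\,(\theta_c(r)-c)\big)\tr\Phi\geq\tr\Phi,
\end{equation*}
and by continuity of $\dv_M(\Phi\nabla u)$ this extends to all of $M$.

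Now integrate over $B_\mu=B_\mu(p_0)$ and apply the divergence theorem (valid for a.e.\ $\mu$, or replace it by testing against a Lipschitz radial cutoff). With $\rho=d_M(\cdot,p_0)$ one has $r\leq\rho$ on $M$ (an isometric immersion does not increase distances), so $r\leq\mu$ on $\partial B_\mu$; together with $|\Phi\nabla r|\leq\tr\Phi/m$ and $|\nabla\rho|=1$,
\begin{equation*}
\int_{B_\mu}\tr\Phi\leq\int_{B_\mu}\dv_M(\Phi\nabla u)=\int_{\partial B_\mu}\lan\Phi\nabla u,\nabla\rho\ran\leq\int_{\partial B_\mu}mr\,|\Phi\nabla r|\leq\mu\int_{\partial B_\mu}\tr\Phi.
\end{equation*}
Putting $J(\mu)=\int_{B_\mu}\tr\Phi$, the coarea formula gives $J'(\mu)=\int_{\partial B_\mu}\tr\Phi$ for a.e.\ $\mu$, hence $J(\mu)\leq\mu J'(\mu)$, i.e.\ $\mu\mapsto J(\mu)/\mu$ is nondecreasing. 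Since $\tr\Phi(p_0)>0$ forces $J(\mu_0)>0$ for small $\mu_0>0$, we obtain $\lim_{\mu\to\infty}J(\mu)/\mu\geq J(\mu_0)/\mu_0>0$, which after dividing by $\tr\Phi(q_0)$ is \eqref{eq-thm-majorar}; in particular $\int_M\tr\Phi=\infty$. Independence of the base point follows because, for two admissible base points at intrinsic distance $d$, the corresponding integrals $J,\tilde J$ satisfy $|J(\mu)-\tilde J(\mu)|\leq\tilde J(\mu+d)-\tilde J(\mu-d)$, and monotonicity of $\tilde J(\mu)/\mu$ forces $|J(\mu)-\tilde J(\mu)|/\mu\to0$.

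The only genuinely delicate point should be the collected inequality: one must not bound $m\lan\Phi\nabla r,\nabla r\ran$ from below by $0$ on its own — that would yield merely $\dv_M(\Phi\nabla u)\geq0$ — but rather combine it with the $-mr\,\theta_c(r)\lan\Phi\nabla r,\nabla r\ran$ produced by the Hessian comparison, so that the resulting nonpositive coefficient, controlled via $|\Phi\nabla r|\leq\tr\Phi/m$, supplies the decisive leading $\tr\Phi$. The remaining ingredients — smoothness of $r^2$ on a Hadamard manifold, the Hessian comparison theorem, the inequality $t\coth t\geq1$, and the a.e.\ regularity of the geodesic spheres $\partial B_\mu$ underlying the divergence and coarea formulas — are standard.
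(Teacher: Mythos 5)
Your proof is correct, and it takes a genuinely different route from the paper's. The paper works with the radial vector field $X=h(r)\,\bar\nabla r$, where $h(t)=\tfrac1c\sinh(ct)$ solves the model Jacobi equation, and sets up a two-function ODE argument: it introduces $F(\mu)=\int_{\partial B_\mu}h(r)\tr\Phi$ and $G(\mu)=\int_0^\mu\int_{\partial B_\tau}\big(\tfrac{h'}{h}-\al\big)h\tr\Phi$, uses the hypothesis $|\Phi\na r|\leq\tr\Phi/m$ to get $F\geq mG$, deduces $G'\geq m\big(\tfrac{h'}{h}-\al\big)G$, integrates this to bound $G$ below by $\tr\Phi(q_0)\,h(\mu)^m e^{-m\int_0^\mu\al}$, and finally integrates once more to reach linear growth. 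You instead take the quadratic potential $u=\tfrac m2 r^2$, fold the Hessian comparison and both hypotheses into a single pointwise inequality $\dv_M(\Phi\na u)\geq\tr\Phi$, and read off $J(\mu)\leq\mu J'(\mu)$, i.e.\ monotonicity of $\mu\mapsto J(\mu)/\mu$ — a monotonicity-formula argument in the spirit of the classical one for minimal submanifolds of Euclidean space. The decisive move, which you correctly flag as the delicate point, is not to discard the nonpositive term $m(1-r\theta_c(r))\lan\Phi\na r,\na r\ran$ but to control it with $|\Phi\na r|\leq\tr\Phi/m$, which is precisely where that hypothesis earns its keep; you also correctly use $\theta_c(r)\geq c$ (not just $r\theta_c(r)\geq1$) to absorb the $H_\Phi+\dv\Phi$ term. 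What the paper's approach buys is generality (arbitrary model $\m K$ and arbitrary $\al$, leading to the unified Theorems 3.1 and 5.1 from which all the corollaries drop out) and an explicit, base-point-free lower constant $m/(2c)^{m-1}$; what your approach buys is brevity and conceptual transparency — one divergence identity, one integration, monotonicity. On the base-point independence: you establish that $\lim_\mu J(\mu)/\mu$ itself is independent of the center via ball inclusions, which is cleaner than (and slightly different from) the paper's route of exhibiting a $q_0$-independent lower bound for the normalized liminf; the asymptotic cancellation you invoke is fine, though it is worth writing it as $\tfrac{\tilde J(\mu\pm d)}{\mu}=\tfrac{\mu\pm d}{\mu}\cdot\tfrac{\tilde J(\mu\pm d)}{\mu\pm d}$ so the telescoping is visible. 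One small expository caveat: in your formula $\dv_M(\Phi\na u)=mr\lan\dv\Phi,\bar\na r\ran+\cdots$ you could equivalently write $\lan\dv\Phi,\na r\ran$, since $\dv\Phi$ is tangent; it is unambiguous but might briefly puzzle a reader.
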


The result bellow follows from Theorem \ref{thm_majorar} by considering $\Phi=\la^s I$, with $1\leq s\leq p$, where $\la:M\to \real$ is a  nonnegative $C^1$-function.

\begin{corollary}\label{majorada} Let $f:M^m\to \bar M$ be an isometric immersion of a complete non-compact manifold $M$ in a Hadamard manifold $\bar M$ with sectional curvature satisfying $\bar K\leq -c^2$, for some constant $c\geq 0$. Let $\la:M\to [0,\infty)$ be a nonnegative $C^1$ function satisfying:
\begin{equation*}
|\la H + p\na\la|\leq {(m-1)c}\la,
\end{equation*}
for some constant $p\geq 1$, where $H=\tr\II$ is  the mean curvature vector field of the immersion $f$. Then, for all $q\in M$, either $\la(q)=0$ or the rate of growth of the integral $\int_M \la^s$ satisfies
\begin{equation}\label{eq-majorada}
\liminf_{\mu\to\infty} \frac{\mu^{-1}}{\la^s(q)}\int_{B_\mu(q)}\la^s>0,
\end{equation}
for all $1\leq s\leq p$. Moreover the limit in (\ref{eq-majorada}) does not depend of $q$.
In particular, either $\la\equiv 0$ or the integral $\int_M\la^s$ is infinite, for all $1\leq s\leq p$.
\end{corollary}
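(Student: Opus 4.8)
The plan is to derive Corollary \ref{majorada} directly from Theorem \ref{thm_majorar} by a careful choice of the operator $\Phi$. Fix a real parameter $s$ with $1\le s\le p$ and set $\Phi=\la^s I:TM\to TM$, which is a positive-semidefinite symmetric operator of class $C^1$ (here one uses $\la\ge 0$ and $\la\in C^1$, noting that $\la^s$ is $C^1$ since $s\ge 1$). Then $\tr\Phi=m\la^s$. For this choice one computes directly from the definitions: the $\Phi$-mean curvature vector is $H_\Phi=\tr\{(X,Y)\mapsto\II(\la^s X,Y)\}=\la^s H$, and the divergence is $\dv\Phi=\dv(\la^s I)=\na(\la^s)=s\la^{s-1}\na\la$, by the remark following the definition of $\dv\Phi$. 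Hence
\begin{equation*}
H_\Phi+\dv\Phi=\la^s H+s\la^{s-1}\na\la=\la^{s-1}\bigl(\la H+s\na\la\bigr).
\end{equation*}

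Next I would verify the two hypotheses of Theorem \ref{thm_majorar}. For the first, observe that the assumption $|\la H+p\na\la|\le (m-1)c\,\la$ together with $1\le s\le p$ gives $|\la H+s\na\la|\le(m-1)c\,\la$; the point is that $\la H+s\na\la$ is a convex combination of $\la H+p\na\la$ and $\la H+\na\la$—more directly, writing $\la H+s\na\la=\tfrac{s-1}{p-1}(\la H+p\na\la)+\tfrac{p-s}{p-1}(\la H+\na\la)$ when $p>1$ (the case $p=1$ forcing $s=1$ is trivial), one must also bound $|\la H+\na\la|$. In fact it is cleaner to argue: from $|\la H+p\na\la|\le(m-1)c\la$ one does not immediately get a bound on $|\la H+\na\la|$, so instead I would note that on the set $\{\la>0\}$ the hypothesis reads $|H+p\na\log\la|\le(m-1)c$, and similarly convexity in the multiplier of $\na\log\la$ gives $|H+s\na\log\la|\le(m-1)c$ for $1\le s\le p$, equivalently $|\la H+s\na\la|\le(m-1)c\la$, which extends to all of $M$ by continuity (both sides vanish where $\la=0$). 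Multiplying by $\la^{s-1}\ge 0$ yields
\begin{equation*}
|H_\Phi+\dv\Phi|=\la^{s-1}|\la H+s\na\la|\le(m-1)c\,\la^s=\frac{(m-1)c}{m}\,\tr\Phi,
\end{equation*}
which is exactly the first hypothesis of Theorem \ref{thm_majorar}. For the second hypothesis, since $\Phi=\la^s I$ we have $|\Phi\na r|=\la^s|\na r|\le\la^s=\tfrac{1}{m}\tr\Phi$, using $|\na r|=|(\bar\na r)^T|\le|\bar\na r|=1$. Finally, the condition $\tr\Phi(q_0)=m\la^s(q_0)>0$ is equivalent to $\la(q_0)\ne 0$.

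With both hypotheses checked, Theorem \ref{thm_majorar} applies for each fixed $q_0=q$ with $\la(q)\ne 0$ and gives
\begin{equation*}
\liminf_{\mu\to\infty}\frac{\mu^{-1}}{\tr\Phi(q)}\int_{B_\mu(q)}\tr\Phi
=\liminf_{\mu\to\infty}\frac{\mu^{-1}}{m\la^s(q)}\int_{B_\mu(q)}m\la^s
=\liminf_{\mu\to\infty}\frac{\mu^{-1}}{\la^s(q)}\int_{B_\mu(q)}\la^s>0,
\end{equation*}
which is \eqref{eq-majorada}; the factors of $m$ cancel, and the stated independence of the limit from $q$ is inherited from the corresponding assertion in Theorem \ref{thm_majorar}. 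In particular $\int_M\la^s=\infty$ whenever $\la$ does not vanish identically (if $\la(q)\ne 0$ for some $q$, apply the above at that $q$; the $\liminf$ being positive forces the integral over $M$ to diverge). The main obstacle is the first step: justifying the bound $|\la H+s\na\la|\le(m-1)c\la$ for every intermediate $s\in[1,p]$ from the single bound at $s=p$. The convexity/logarithmic-derivative argument sketched above handles it, but one must be slightly careful at points where $\la=0$ (where $\na\la$ need not vanish if $s<1$, though here $s\ge 1$ so $\la^{s-1}\na\la\to 0$) and in the degenerate case $p=1$; both are routine once isolated. Everything else is a direct substitution into Theorem \ref{thm_majorar}.
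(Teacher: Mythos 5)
Your overall strategy is the same as the paper's (take $\Phi=\la^s I$ and plug into Theorem~\ref{thm_majorar}), and all of the routine verifications --- $H_\Phi=\la^s H$, $\dv\Phi=s\la^{s-1}\na\la$, $\tr\Phi=m\la^s$, $|\Phi\na r|\le \tr\Phi/m$, and $\tr\Phi(q_0)>0\iff\la(q_0)\neq0$ --- are correct. But the step you yourself flag as ``the main obstacle'' is not handled by the argument you propose: neither the convex-combination decomposition nor the log-derivative rephrasing establishes $|\la H+s\na\la|\le(m-1)c\la$ for $1\le s\le p$ from the single hypothesis at $s=p$. Writing $\la H+s\na\la$ as a convex combination of $\la H+p\na\la$ and $\la H+\na\la$ requires a bound on the latter, which you do not have; and writing $H+s\na\log\la$ and invoking ``convexity in the multiplier'' is the same argument and has the same missing ingredient. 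In general, for vectors $v,w$, $s\mapsto|v+sw|^2=|v|^2+2s\lan v,w\ran+s^2|w|^2$ is a convex quadratic, so the sublevel set $\{s:|v+sw|\le C\}$ is an interval containing $p$ --- but nothing forces it to contain $[1,p]$. (Take $v=-pw$ with $|w|=1$ and $C$ small: the sublevel interval is a short interval around $s=p$.)

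The missing idea is orthogonality, not convexity. The mean curvature vector $H=\tr\II$ is a \emph{normal} field, while $\na\la$ is a \emph{tangent} field, so $\lan H,\na\la\ran=0$ in $T\bar M$. Hence
\begin{equation*}
|\la H+s\na\la|^2=\la^2|H|^2+s^2|\na\la|^2,
\end{equation*}
which is nondecreasing in $s\ge0$. Therefore $|\la H+s\na\la|\le|\la H+p\na\la|\le(m-1)c\,\la$ for every $0\le s\le p$, and multiplying by $\la^{s-1}\ge0$ gives $|H_\Phi+\dv\Phi|\le\frac{(m-1)c}{m}\tr\Phi$ as required. With this replacement the rest of your proof goes through; without it, the key inequality is unjustified.
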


Let $M$ be a Hadamard manifold with sectional curvature satisfying $K\leq -c^2$, for some constant $c\geq 0$. It is simple to show that the warped product space $\bar M=\real\times_{\cosh(ct)}M$ is also a Hadamard manifold with sectional curvature satisfying $\bar K\leq -c^2$. Furthermore the inclusion map $i:M\to \{0\}\times M\subset \bar M$ is a totally geodesic embedding. Thus it follows from Corollary \ref{majorada} the result below:

\begin{corollary} Let $M$ be a Hadamard manifold with sectional curvature satisfying $K\leq -c^2$, for some constant $c\geq 0$. Let $\la:M\to \real$ be a nonnegative $C^1$ function satisfying $|\na \la|\leq \frac{(m-1)c}{p} \la$, for some constant $p\geq 1$. Then either $\la \equiv 0$ or the rate of growth of $\int_M \la^s$ satisfies 
\begin{equation}\label{eq-majorada-2}
\liminf_{\mu\to\infty} \frac{\mu^{-1}}{\la^s(q)}\int_{B_\mu(q)}\la^s>0,
\end{equation}
for all $1\leq s\leq p$. Moreover the limit in (\ref{eq-majorada-2}) does not depend of $q$.
In particular, either $\la\equiv 0$ or the integral $\int_M\la^s$ is infinite, for all $1\leq s\leq p$.
\end{corollary}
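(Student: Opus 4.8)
The strategy is to realise $M$ as a totally geodesic submanifold of an ambient Hadamard manifold whose sectional curvature is bounded above by $-c^2$, and then quote Corollary \ref{majorada}. Concretely, following the remark made just before the statement, I would take $\bar M:=\real\times_{\varphi}M$, the warped product with base $\real$, fibre $M$ and warping function $\varphi(t)=\cosh(ct)$, so that its metric is $dt^2+\varphi(t)^2\,g_M$. Since $\varphi\ge 1>0$ and both $\real$ and $M$ are complete, $\bar M$ is complete; and since $M$, being Hadamard, is diffeomorphic to $\real^m$, the product $\bar M$ is diffeomorphic to $\real^{m+1}$, hence simply connected.

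The next step is to verify the two structural facts underlying the construction. For the curvature, the standard warped-product formulas give $\bar K=-\varphi''/\varphi=-c^2$ on every $2$-plane containing $\partial_t$, while on a $2$-plane $\Pi$ tangent to a fibre $\{t\}\times M$ one has
\[
\bar K(\Pi)=\frac{K_M(\Pi)-\varphi'(t)^2}{\varphi(t)^2}\le\frac{-c^2-c^2\sinh^2(ct)}{\cosh^2(ct)}=-c^2 ,
\]
using the hypothesis $K_M\le -c^2$ together with $\cosh^2(ct)-\sinh^2(ct)=1$. An arbitrary $2$-plane admits an orthonormal basis $\{e_1,e_2\}$ with $e_2$ tangent to the fibre (because $\Pi$ meets the fibre tangent hyperplane in a line), and a short computation in which the mixed curvature terms drop out gives $\bar K(\Pi)=a^2\,\bar K(\partial_t,e_2)+|u|^2\,\bar K(u,e_2)$, a convex combination ($a^2+|u|^2=1$, $e_1=a\partial_t+u$) of a curvature of the first kind and one of the second kind, hence $\le -c^2$. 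Thus $\bar M$ is a Hadamard manifold with $\bar K\le -c^2$. For the embedding, the slice $\{0\}\times M$ carries the induced metric $\varphi(0)^2\,g_M=g_M$, so $i\colon M\to\{0\}\times M\subset\bar M$ is an isometric embedding; each fibre $\{t\}\times M$ is totally umbilic with second fundamental form a multiple of $\varphi'(t)$, and $\varphi'(0)=c\sinh 0=0$, so $\{0\}\times M$ is totally geodesic. In particular the mean curvature vector $H=\tr\II$ of $i$ vanishes identically. (This is precisely why $\cosh(ct)$, which has a critical point at $t=0$, is the right warping function.)

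Finally I would apply Corollary \ref{majorada} to the isometric immersion $i$. Because $H\equiv 0$, its hypothesis $|\la H+p\,\na\la|\le (m-1)c\,\la$ reduces to $p|\na\la|\le (m-1)c\,\la$, that is, to the assumed inequality $|\na\la|\le\frac{(m-1)c}{p}\,\la$; and since $i$ is isometric, the intrinsic geodesic balls $B_\mu(q)$ and the integrals $\int_{B_\mu(q)}\la^s$ appearing in Corollary \ref{majorada} are literally those in the statement. Hence the dichotomy, the growth estimate \eqref{eq-majorada-2} together with its independence of $q$, and the infiniteness of $\int_M\la^s$ for all $1\le s\le p$ follow at once. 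The corollary is thus essentially a direct application of Corollary \ref{majorada}; the only step that calls for genuine care is the curvature bookkeeping in the second paragraph — checking that the fibre-tangent $2$-planes, whose curvature gets rescaled by $\varphi^{-2}$ and lowered by $-|\varphi'|^2\varphi^{-2}$, still satisfy $\bar K\le -c^2$.
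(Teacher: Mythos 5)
Your proposal is correct and follows exactly the route the paper indicates in the remark immediately preceding the corollary: embed $M$ totally geodesically as the slice $\{0\}\times M$ in the warped product $\bar M=\real\times_{\cosh(ct)}M$, verify $\bar K\le -c^2$, and apply Corollary \ref{majorada} with $H\equiv 0$. Your curvature bookkeeping (mixed terms vanishing by the warped-product curvature identities, the convex-combination argument for general $2$-planes) and the observation that $\varphi'(0)=0$ makes the slice totally geodesic are precisely the details the paper declares ``simple to show.''
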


Finally we will enunciate our last theorem. We recall that an end $E$ of $M$ is an unbounded connected component of the complement set $M-\Om$, for some compact subset $\Om$ of $M$. We say that a manifold $\bar M$ has bounded geometry if it has sectional curvature bounded from above and injective radius bounded from below by a positive constant. By results of Frensel \cite{fr},  Cheng, Cheung and Zhou \cite{ccz} and do Carmo, Wang and Xia \cite{cwx} we know the following theorem.
\begin{theoremB}
Let $f:M\to \bar M$ be an isometric immersion of a complete non-compact manifold $M$ in a manifold $\bar M$ with bounded geometry. If the mean curvature vector of $f$ is bounded in norm then each end of $M$ has infinite volume.
\end{theoremB}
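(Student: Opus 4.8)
The plan is to prove a uniform non‑collapsing estimate for small intrinsic balls and then to pack infinitely many disjoint such balls into a single end. Fix an upper bound $\Lambda$ for the sectional curvature of $\bar M$, a lower bound $i_0>0$ for its injectivity radius, and set $\alpha=\sup_M|H|<\infty$. Choose once and for all a radius $r_0>0$, depending only on $m,\Lambda,i_0,\alpha$, small enough that $r_0<i_0$ and, when $\Lambda>0$, also $r_0<\pi/(2\sqrt{\Lambda})$. The key claim is the following: there is $v_0>0$, depending only on $m,\Lambda,i_0,\alpha$, such that $\vol(B_{r_0}(p))\ge v_0$ for every $p\in M$, where $B_{r_0}(p)$ is the geodesic ball of $M$.

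To establish this I would fix $p$ and put $\rho=d_{\bar M}(f(\cdot),f(p))$. On $B_{r_0}(p)$ one has $\rho\le d_M(\cdot,p)<r_0<i_0$, so $\tfrac12\rho^2$ is smooth there, and the Hessian comparison theorem in $\bar M$ (using $\bar K\le\Lambda$) yields $\tr\big(\hs_{\bar M}(\tfrac12\rho^2)\big|_{TM}\big)\ge m-C\rho^2$ for a constant $C=C(\Lambda)\ge0$. Combining this with the composition formula $\De_M(h\circ f)=\tr\big(\hs_{\bar M}h\big|_{TM}\big)+\langle\bar\na h,H\rangle$ and with $|\langle\bar\na(\tfrac12\rho^2),H\rangle|\le\rho\,\alpha$ gives $\De_M(\tfrac12\rho^2)\ge m-C\rho^2-\alpha\rho$ on $B_{r_0}(p)$. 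Writing $V(r)=\vol(B_r(p))$, integrating this inequality over $B_r(p)$, applying the divergence theorem together with $|\na(\tfrac12\rho^2)|=\rho|\na\rho|\le\rho\le r$ on $\partial B_r(p)$, and using the coarea formula leads to the differential inequality $V'(r)\ge\big(\tfrac{m}{r}-Cr-\alpha\big)V(r)$ for almost every $0<r\le r_0$. Integrating this from $\varepsilon$ to $r_0$ and letting $\varepsilon\to0^+$, where $V(\varepsilon)/\varepsilon^m\to\omega_m$ (the volume of the Euclidean unit ball) because $M$ is a smooth Riemannian manifold, produces $\vol(B_{r_0}(p))\ge\omega_m r_0^m e^{-\frac{C}{2}r_0^2-\alpha r_0}=:v_0>0$.

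Next I would show that each end contains a suitable ray. Let $E$ be a component of $M\setminus\Omega$ with $\Omega$ compact; since $\bar E$ cannot be compact, $\sup_{x\in E}d_M(x,\Omega)=\infty$, so there are $q_n\in E$ with $d_M(q_n,\Omega)\to\infty$. A minimizing geodesic from $q_n$ to $\Omega$ meets $\Omega$ only at a point $p_n\in\partial\Omega$ and its open part lies in $E$; passing to a subsequence along which $p_n\to p_\infty\in\partial\Omega$ and the initial velocities converge, the limiting geodesic $\gamma\colon[0,\infty)\to M$ is minimizing, satisfies $\gamma(s)\in E$ for all $s>0$, and, comparing with the approximants, $d_M(\gamma(s),\partial E)\ge d_M(\gamma(s),\Omega)=s$. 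Now take $p_k=\gamma(3kr_0)$ for $k\ge1$: the balls $B_{r_0}(p_k)$ are pairwise disjoint because $d_M(p_k,p_\ell)=3r_0|k-\ell|\ge3r_0>2r_0$, and each lies in $E$ because $p_k$ is at distance $\ge3kr_0>r_0$ from $\partial E$; by the non‑collapsing estimate each has volume at least $v_0$. Hence $\vol(E)\ge\sum_{k=1}^{N}\vol(B_{r_0}(p_k))\ge Nv_0$ for all $N$, so $\vol(E)=\infty$.

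The delicate point is the non‑collapsing estimate: one must run the monotonicity‑type argument so that the lower bound is not destroyed in the limit $\varepsilon\to0$, which is precisely why it is carried out with intrinsic balls — whose infinitesimal density is the Euclidean $\omega_m$ regardless of how $M$ sits in $\bar M$ — and why the curvature and mean‑curvature errors are collected into the single exponential factor $e^{-\frac{C}{2}r_0^2-\alpha r_0}$; the bounded‑geometry hypothesis enters exactly through the constants $r_0$ and $v_0$. The ray‑extraction step is standard point‑set topology combined with the Hopf–Rinow theorem and should present no difficulty.
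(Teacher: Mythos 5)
Your argument is correct. Two remarks on how it relates to the paper. First, the paper does not reprove Theorem~B — it cites it to Frensel, Cheng–Cheung–Zhou and do~Carmo–Wang–Xia — but it does prove the generalization Theorem~\ref{bound_geom}, and your proof runs along the same two-step strategy as the proof of that theorem: a uniform lower bound for the volume (more generally, the integral of $\tr\Phi$) on small intrinsic balls, followed by packing infinitely many disjoint such balls inside the end. Second, the technical engine you use for the non-collapsing step is different. The paper works with the $\Phi$-divergence of the radial field $X=h(r)\bar\na r$, where $h$ solves the Jacobi-type ODE $h''+\m Kh=0$, and derives the sharp monotonicity $G'(\mu)\geq m(h'/h-\al)G(\mu)$; this yields a lower bound of the form $m\tr\Phi(q_0)\int_0^\mu h(\tau)^{m-1}e^{-m\int_0^\tau\al}\,d\tau$, which is exact in a space form. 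You instead apply the divergence theorem to $\na_M(\tfrac12\rho^2)$, using the cruder Taylor-type Hessian comparison $\tr\bigl(\hs_{\bar M}(\tfrac12\rho^2)\big|_{TM}\bigr)\geq m-C\rho^2$, which gives the weaker but perfectly adequate bound $\om_m r_0^m e^{-\frac{C}{2}r_0^2-\al r_0}$; your approach is more elementary and self-contained, at the cost of a less precise constant. Finally, for the packing step you construct an explicit geodesic ray in $E$ and place ball centers at arithmetic progression along it, whereas the paper simply extracts a divergent sequence $q_k$ in $E$ and passes to a subsequence making the balls disjoint and contained in $E$; your version is slightly more constructive but the two are equivalent, since $\p E\subset\p\Om$ is compact so $d(q_k,\p E)\to\infty$ automatically. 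All the details you flag (the composition formula $\De_M(h\circ f)=\tr(\hs_{\bar M}h|_{TM})+\lan\bar\na h,H\ran$, the limit $V(\ep)/\ep^m\to\om_m$, the fact that $\ga$ is a genuine distance-realizing ray so that $d(\ga(s),\Om)=s$ and $\ga([a,b])$ is minimizing) check out.
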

Actually, Cheng, Cheung and Zhou \cite{ccz} improve Theorem B by showing that the volume growth of each end $E$ of $M$ is at least linear, that is 
\begin{equation}\label{eq-ccz}
\liminf_{\mu\to \infty} \frac{\vol(B_\mu(q)\cap E)}{\mu}>0,
\end{equation}
for all $q\in E$. Moreover the limit (\ref{eq-ccz}) does not depend of of $q$ (see Proposition 2.1 of \cite{ccz}).

Our last theorem says the following:

\begin{theorem}\label{bound_geom} Let $f:M^m\to \bar M$ be an isometric immersion of a complete non-compact manifold $M$ in a manifold $\bar M$ with bounded geometry. Let $E$ be an end of $M$ and $\la:E\to \real$ a nonnegative $C^1$ function. Assume that the mean curvature vector field $H=\tr\II$ of the immersion $f$ satisfies $$|H\la+p\na\la|\leq \ka\, \la \ \mbox{ in } E,$$ for some constant $\ka\geq 0$. Then it holds that $\lim_{x\to \infty \atop{x\in E}} \la(x)=0$ or the integrals $\int_E \la^s$ are infinite, for all $1\leq s\leq p$.
\end{theorem}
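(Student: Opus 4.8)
The plan is to reduce the statement to the same differential-inequality machinery that drives Theorems \ref{thm_asz}--\ref{thm_majorar}, but now carried out \emph{intrinsically on the end $E$} rather than using the distance function of $\bar M$. First I would set $\Phi=\la^s I$ on $E$, so that $\tr\Phi=m\la^s$, $H_\Phi=s\la^{s-1}(\la H)$ up to the scalar factor, and $\dv\Phi=s\la^{s-1}\na\la$; hence the hypothesis $|H\la+p\na\la|\le\ka\la$ together with $s\le p$ gives a pointwise bound of the form $|H_\Phi+p\,\dv\Phi|\le C\,\tr\Phi$ on $E$ (here one uses $s\le p$ to absorb the coefficient: $|s\la^{s-1}(\la H+s\na\la)|\le |s\la^{s-1}(\la H+p\na\la)|+|s(p-s)\la^{s-1}\na\la|$, and one must be a little careful because the ``$p$'' in front of $\dv\Phi$ is $s$, not $p$ — but since $1\le s\le p$ the weaker inequality with coefficient $s$ still follows). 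The point is that $\la^s$ satisfies an inequality of exactly the type handled in the earlier arguments, with a \emph{constant} on the right-hand side rather than a weight decaying like $1/(r+\ep)$.

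Next I would invoke the divergence/first-variation computation underlying Theorem \ref{thm_majorar}: for a suitable cutoff or for the function $\rho=d_M(\cdot,q)$ on the end, one has $\dv(\Phi\na\rho)=\langle H_\Phi+\dv\Phi,(\bar\na\rho)^\perp\rangle+(\text{Hessian comparison terms})\ge \tr\Phi\cdot(\text{positive curvature-comparison factor})-|H_\Phi+\dv\Phi|$. Because $\bar M$ has bounded geometry — sectional curvature $\le b^2$ and injectivity radius $\ge i_0>0$ — the key ingredient is the lower bound for the mean curvature of geodesic spheres of $\bar M$ of radius $<i_0$, which gives a uniform positive lower bound on that comparison factor on a fixed scale; this is precisely the mechanism Cheng--Cheung--Zhou use to prove the linear volume growth in \eqref{eq-ccz}. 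Integrating the resulting differential inequality over $B_\mu(q)\cap E$ and applying the divergence theorem (controlling the boundary term on $\p\Om$, which is compact and hence contributes a bounded quantity) yields a Gronwall-type estimate: if $\la$ does not tend to $0$ along $E$, there is a sequence of points in $E$ where $\la^s$ is bounded below, and the comparison inequality forces $\int_{B_\mu(q)\cap E}\la^s$ to grow at least linearly in $\mu$, hence $\int_E\la^s=\infty$.

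The step I expect to be the main obstacle is handling the \emph{end} rather than the whole manifold, and the lack of a global distance-type function with controlled Hessian. On a general end $E\subset M-\Om$ one cannot use $d_{\bar M}(\cdot,q_0)$ as the test function (the immersion need not be proper and $\bar M$ need not be Hadamard), so one must work with the intrinsic distance to a point of $E$ or with the distance to $\p\Om$ inside $E$, localize everything to balls of radius comparable to the injectivity radius $i_0$ of $\bar M$, and then chain these local estimates along the end. Making the chaining uniform — so that each annulus of fixed width contributes a fixed positive amount to $\int_E\la^s$ unless $\la$ has already decayed — is where the bounded-geometry hypothesis is used in an essential (not merely cosmetic) way, and where care is needed to ensure the constants do not degenerate as one marches to infinity along $E$. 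Once that localization is in place, the dichotomy ``$\la\to 0$ on $E$ or $\int_E\la^s=\infty$'' follows by the same argument as in Corollary \ref{majorada}, applied end-by-end.
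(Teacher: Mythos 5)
Your high-level plan --- feed $\Phi=\la^s I$ into the $\Phi$-divergence comparison machinery and let bounded geometry supply uniform constants --- is close in spirit to the paper, but the step you propose as the engine would fail, and the obstacle you single out is not where the real work lies. With sectional curvature only bounded above by $c^2>0$, the comparison function from (\ref{cauchy-K}) is $h(t)=\frac1c\sin(ct)$, which is positive and satisfies $h'/h>\al$ only up to a finite $t_0\le\pi/(2c)$, and the conclusion of Theorem~\ref{phi r} is valid only for radii $\mu<\min\{\mu_{\m K,\al},\bar R_0\}$, where $\bar R_0$ is the uniform lower bound for the injectivity radius of $\bar M$ --- a fixed finite quantity. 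So the claim that the comparison inequality forces $\int_{B_\mu(q)\cap E}\la^s$ to ``grow at least linearly in $\mu$'' cannot be right: $\mu$ simply cannot be taken to infinity, and there is no Gronwall-type differential inequality running along the end. What the paper actually does is apply Theorem~\ref{phi r} at a \emph{fixed} small scale $\mu_0<\min\{\mu_{\m K,\al},\bar R_0\}$, yielding for every $q\in E$ with $B_{\mu_0}(q)\subset E$ the local lower bound
\begin{equation*}
\int_{B_{\mu_0}(q)}\la\ \ge\ \la(q)\,\Ga(\mu_0),\qquad \Ga(\mu_0)=\frac{m}{c^{m-1}}\int_0^{\mu_0}\sin(c\tau)^{m-1}e^{-m\ka\tau}\,d\tau>0,
\end{equation*}
with $\Ga(\mu_0)$ independent of $q$ thanks to bounded geometry. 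If $\limsup_{x\to\infty,\,x\in E}\la>0$, one picks $\de>0$ and a sequence $q_k\to\infty$ in $E$ with $\la(q_k)\ge\de$, extracts a subsequence with $B_{\mu_0}(q_k)\subset E$ pairwise disjoint, and sums: $\int_E\la\ge N\de\,\Ga(\mu_0)$ for every $N$. This is a packing argument over disjoint balls of fixed radius, not a growth-rate estimate and not a chaining of annuli.

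Two smaller points. First, your concern that $d_{\bar M}(\cdot,q)$ ``cannot be used'' because $f$ may not be proper and $\bar M$ is not Hadamard is overcautious: the estimate is entirely local, each $B_{\mu_0}(q)$ sits inside the injectivity ball of $\bar M$ about $q$, and that is all Theorem~\ref{phi r} needs; properness of $f$ plays no role. Second, the coefficient discrepancy you flag --- $\dv(\la^sI)=s\la^{s-1}\na\la$ carries an $s$, while the hypothesis controls $\la H+p\na\la$ --- is a genuine subtlety that your hand-wave does not resolve: $\la H+s\na\la=(\la H+p\na\la)-(p-s)\na\la$ leaves an uncontrolled $(p-s)|\na\la|$ term unless $|H|$ is separately bounded. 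The clean case is $s=p$, where $\Phi=\la^pI$ gives $H_\Phi+\dv\Phi=\la^{p-1}(\la H+p\na\la)$ and the hypothesis of Theorem~\ref{phi r} holds with $\al\equiv\ka/m$; the passage to $1\le s<p$ requires an actual argument, not the assertion that ``the weaker inequality still follows.''
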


Note that if $M$ has bounded geometry then the product manifold $\bar M=M\times \real$ also has bounded geometry. Since the inclusion map $i:M\to M\times \{0\}\subset \bar M$ is a totally geodesic embedding, the result below follows from Theorem  \ref{bound_geom}.

\begin{corollary} Let $E$ be an end of a complete non-compact manifold with bounded geometry. Let $\la:E\to [0,\infty)$ be a nonnegative $C^1$ function. Assume that $$|\na\la|\leq \ka \la \mbox{ in E,}$$ for some constant $\ka\geq 0$. Then it holds that $\lim_{x\to \infty \atop{x\in E}} \la(x)=0$ or the integrals $\int_E \la^p$ are infinite, for all $p\geq 1$.
\end{corollary}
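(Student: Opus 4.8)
The plan is to obtain this corollary as the special case of Theorem~\ref{bound_geom} in which the ambient manifold is the Riemannian product $\bar M = M\times\real$ and $M$ sits inside $\bar M$ as the totally geodesic slice $M\times\{0\}$; the point is that for such an isometric immersion the mean curvature vector vanishes, so the term $H\la$ in the hypothesis of Theorem~\ref{bound_geom} disappears.

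First I would record the two elementary transfer facts. (i) If $M$ has bounded geometry then so does $\bar M = M\times\real$: the curvature tensor of a Riemannian product is the sum of the curvature tensors of the factors, and the $\real$-factor is flat, so every sectional curvature of $\bar M$ equals a sectional curvature of $M$ multiplied by a factor in $[0,1]$ (the ratio of the area of the chosen $2$-plane to the area of its projection to $TM$), whence the sectional curvature of $\bar M$ is bounded from above; and since geodesics of $\bar M$ are products of geodesics of $M$ with straight lines of $\real$, one gets $\mathrm{inj}(\bar M)\geq\mathrm{inj}(M)>0$. (ii) The inclusion $i\colon M\hookrightarrow M\times\{0\}\subset\bar M$ is a totally geodesic embedding, so its second fundamental form vanishes and hence $H=\tr\II\equiv 0$; restricting $i$ to the end $E$ gives an isometric immersion of $E$ into $\bar M$ with (zero, in particular bounded) mean curvature.

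Now fix an arbitrary $p\geq 1$. Using $H\equiv 0$, the standing hypothesis $|\na\la|\leq\ka\la$ on $E$ gives
\[
|H\la+p\,\na\la| \;=\; p\,|\na\la| \;\leq\; (p\ka)\,\la \qquad\text{on } E ,
\]
which is precisely the hypothesis of Theorem~\ref{bound_geom} with the nonnegative constant $p\ka$ playing the role of $\ka$. Theorem~\ref{bound_geom} then yields the alternative: either $\lim_{x\to\infty,\,x\in E}\la(x)=0$, or $\int_E\la^s=\infty$ for all $1\leq s\leq p$; in particular $\int_E\la^p=\infty$. Since $p\geq 1$ was arbitrary, we conclude that either $\lim_{x\to\infty,\,x\in E}\la(x)=0$ or $\int_E\la^p=\infty$ for every $p\geq 1$, which is exactly the statement.

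There is no real obstacle in this argument: the substance is entirely contained in Theorem~\ref{bound_geom}, and the only things to check are the two standard transfer facts in the second paragraph. The single mildly delicate point is the constant bookkeeping — one must invoke Theorem~\ref{bound_geom} with $p\ka$ rather than $\ka$ — but this is harmless because Theorem~\ref{bound_geom} is stated for an arbitrary constant $\ka\geq 0$.
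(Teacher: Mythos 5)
Your proposal is correct and follows exactly the route the paper indicates in the remark preceding this corollary: pass to the Riemannian product $\bar M = M\times\real$, use that the inclusion $i\colon M\to M\times\{0\}$ is totally geodesic so that $H\equiv 0$, verify that bounded geometry passes to the product, and then invoke Theorem~\ref{bound_geom} with the constant $p\ka$ in place of $\ka$. Your extra care with the curvature and injectivity-radius transfer and with the constant $p\ka$ is exactly the bookkeeping the paper leaves implicit.
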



\section{Preliminaries}\label{preliminaries}
Let $f:M^m\to \bar M$ be an isometric immersion of an $m$-dimensional Riemannian manifold $M$ in a Riemannian manifold $\bar M$. For the sake of simplicity, henceforth we shall make the usual identification of the points $f(q)$ with $q$ and the vectors $df_qv$ with $v$, for all $q\in M$ and $v\in T_qM$. 
Let $\Phi:TM\to TM$ be a symmetric operator of class $C^1$. We consider the following definition:
\begin{definition} The {\it $\Phi$-divergence} of a vector field $X:M\to T\bar M$ of class $C^1$ is given by the following: 
\begin{equation*}
\m D_{\Phi} X =\tr\left\{Z\in TM\mapsto \Phi \left(\bar\nabla_Z X\right)^T\right\},
\end{equation*}
Note that if $\Phi=I:TM\to TM$ is the identity operator then $D_{\Phi}X$ coincides with the divergence $\rm{div}_MX$. 
\end{definition}

Let $u=(u^1,\ldots,u^m)$ be a local coordinate system on $M$. Let $\{\frac{\p}{\p u^1},\ldots,\frac{\p}{\p u^m}\}$ and  $\{du^1,\ldots,du^m\}$ be the frame and coframe associated to $u$, respectively. Using the Einstein's summation convention, let $\lan\,,\ran=g_{ij}du^i \otimes du^j$ be the metric of $M$. The Cheng-Yau square operator \cite{cy} associated to the symmetric $(0,2)$-tensor $\phi=\phi_{ij}du^i\otimes du^j$, where $\phi_{ij}=\lan \Phi(\frac{\p}{\p u^i}),\frac{\p}{\p u^j}\ran=\Phi^k_ig_{kj}$, is defined by
\begin{equation}\label{chen-yau}
\square_{\phi} f = \m D_{\Phi} (\na f).
\end{equation}
It was proved in \cite{cy} that the operator $\square_{\phi}$ is self-adjoint on the space of the all Sobolev functions with null trace if, and only if, the covariant derivative of $\Phi$ satisfies $\Phi^i_{j,i}=0$, for all $j$. Let $X:M\to T\bar M$ be a vector field of class $C^1$ and write $X^T=X^j\frac{\p}{\p u^j}$. Using that  $\na_{\frac{\p}{\p u^i}}X^T=\na_{\frac{\p}{\p u^i}}(X^j\frac{\p}{\p u^j})=X^j_{,i}\frac{\p}{\p u^j}$, we have that
\begin{equation*}
\Phi(\na_{\frac{\p}{\p u^i}}X^T)=\Phi(X^j_{,i}\frac{\p}{\p u^j})=X^j_{,i}\Phi^k_j\frac{\p}{\p u^k}.
\end{equation*} 
Thus we have that $\m D_{\Phi}(X^T)=X^j_{,i}\Phi^i_j$. Since $\Phi(X^T)=X^j\Phi(\frac{\p}{\p u^j})=X^j\Phi_j^i\frac{\p}{\p u^i}$, we obtain
\begin{eqnarray}\label{divergent}
\dv_M(\Phi (X^T))&=& (X^j\Phi_j^i)_{,i}=X^j_{,i}\Phi^i_j+X^j\Phi^i_{j,i}\\&=&\m D_{\Phi}X^T + (\dv\Phi)^*(X^T),\nonumber 
\end{eqnarray}
where $(\dv\Phi)^*$ the $1$-form defined by $(\dv\Phi)^*=\Phi^i_{j,i}du^j$. Using that $(\na_{\frac{\p}{\p u^i}}\Phi)\frac{\p}{\p u^j}=\Phi_{j,i}^k\frac{\p}{\p u^k}$ we have that $(\na_{\frac{\p}{\p u^i}}\Phi)X^T=X^j(\na_{\frac{\p}{\p u^i}}\Phi)\frac{\p}{\p u^j}=X^j\Phi_{j,i}^k\frac{\p}{\p u^k}$. This implies that 
\begin{eqnarray}\label{div}
(\dv\Phi)^*(X^T)&=& X^j\Phi_{j,i}^i=\tr\left\{Y\in TM \mapsto (\na_{Y}\Phi)X^T\right\}\\&=&\lan \dv \Phi, X^T\ran.\nonumber
\end{eqnarray}

\begin{proposition}\label{phi vector} Let $X:M^m\to T\bar M$ be a $C^1$ vector field and $f:M\to \real$  a $C^1$ function. Then the following statements hold:
\begin{enumerate}[(A)]
\item\label{phi tangent} $\m D_{\Phi} X = \m D_{\Phi} X^T - \lan H_{\Phi},X\ran$; 
\item\label{phi function} $\m D_{\Phi} (fX)=f\m D_{\Phi}X + \lan \Phi (X^T),\nabla f\ran$;
\item\label{divphi} $\m D_{\Phi} X= \dv_M(\Phi(X^T)) - \lan (H_{\Phi}+\dv_M\Phi) , X\ran. 
$
\end{enumerate}
\end{proposition}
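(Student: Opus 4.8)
The plan is to prove the three identities in the order (A), (B), (C), obtaining (C) from (A) together with the coordinate computations (\ref{divergent})--(\ref{div}) already established above. Throughout I would fix $q\in M$, work in a local orthonormal frame $\{e_1,\dots,e_m\}$ of $TM$ near $q$, and decompose the ambient field as $X=X^T+X^\perp$ into its components tangent and normal to $f$.

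For (A), the key point is the behaviour of $(\bar\na_Z X)^T$ under this decomposition: the Gauss formula gives $(\bar\na_Z X^T)^T=\na_Z X^T$, and the Weingarten formula gives $(\bar\na_Z X^\perp)^T=-A_{X^\perp}Z$, where $A_\xi$ is the shape operator in the normal direction $\xi$, normalized by $\lan \II(u,v),\xi\ran=\lan A_\xi u,v\ran$. Hence $(\bar\na_Z X)^T=\na_Z X^T-A_{X^\perp}Z$; applying $\Phi$ and taking the trace over $Z$ gives $\m D_{\Phi}X=\m D_{\Phi}X^T-\tr\{Z\mapsto \Phi(A_{X^\perp}Z)\}$. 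It then remains to identify the last trace with $\lan H_{\Phi},X\ran$, which follows from the symmetry of $\Phi$ and of $A_{X^\perp}$ together with $H_\Phi$ being normal: $\sum_i\lan \Phi(A_{X^\perp}e_i),e_i\ran=\sum_i\lan A_{X^\perp}(\Phi e_i),e_i\ran=\sum_i\lan \II(\Phi e_i,e_i),X^\perp\ran=\lan H_{\Phi},X\ran$.

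For (B), I would use the Leibniz rule $\bar\na_Z(fX)=(Zf)\,X+f\,\bar\na_Z X$; projecting onto $TM$ and applying $\Phi$ yields $\Phi\big((\bar\na_Z(fX))^T\big)=(Zf)\,\Phi(X^T)+f\,\Phi\big((\bar\na_Z X)^T\big)$, and taking the trace over $Z$ the second term gives $f\,\m D_{\Phi}X$ while $\sum_i(e_if)\lan \Phi(X^T),e_i\ran=\lan \Phi(X^T),\na f\ran$. For (C), I would combine (A) with (\ref{divergent}) and (\ref{div}): writing $X^T=X^j\frac{\p}{\p u^j}$, (\ref{divergent}) says $\dv_M(\Phi(X^T))=\m D_{\Phi}X^T+(\dv\Phi)^*(X^T)$ and (\ref{div}) says $(\dv\Phi)^*(X^T)=\lan \dv\Phi,X^T\ran=\lan \dv\Phi,X\ran$ (the last equality since $\dv\Phi$ is tangent to $M$); substituting $\m D_{\Phi}X^T=\dv_M(\Phi(X^T))-\lan \dv\Phi,X\ran$ into (A) produces (C).

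I do not expect a genuine obstacle; the only step requiring care is (A), namely treating the normal part $X^\perp$ correctly via the Weingarten formula and then using the symmetry of both $\Phi$ and the shape operator to recast the trace $\tr\{Z\mapsto \Phi(A_{X^\perp}Z)\}$ in the form defining $H_{\Phi}$. Once (A) is in hand, (B) is an immediate Leibniz-rule computation and (C) is bookkeeping with the identities already derived above.
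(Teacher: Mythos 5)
Your proof is correct and follows essentially the same route as the paper: the same orthonormal-frame computation for (A) and (B), and the same substitution of (\ref{divergent})--(\ref{div}) into (A) for (C). The only cosmetic difference is that in (A) you name the Weingarten operator $A_{X^\perp}$ explicitly, whereas the paper passes directly from $\lan \Phi(\bar\nabla_{e_i}X^N)^T,e_i\ran$ to $-\lan \II(e_i,\Phi e_i),X^N\ran$; these are the same computation.
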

\begin{proof}
Write $X=X^T+X^N$, where $X^N$ is the orthogonal projection of $X$ to $T^\perp M$. Let $\{e_1,\ldots,e_m\}$ be a local orthonormal frame of $M$. We have that
\begin{eqnarray}
\m D_{\Phi} X &=& \sum_{i=1}^m \lan \Phi (\bar\nabla_{e_i}X)^T,e_i\ran = \sum_{i=1}^m \lan \Phi (\nabla_{e_i}X^T),e_i\ran + \sum_{i=1}^m \lan \Phi (\bar\nabla_{e_i}X^N)^T , e_i\ran \nonumber \\ &=& \m D_{\Phi} X^T - \sum_{i=1}^m \lan \II(e_i,\Phi e_i), X^N\ran= \m D_{\Phi} X^T-\lan H_{\Phi},X\ran,\nonumber
\end{eqnarray}
which proves Item \ref{phi tangent}.  Now,
\begin{eqnarray*}
\m D_{\Phi} (fX)&=& \sum_{i=1}^m \lan \Phi (\bar\nabla_{e_i}fX)^T,e_i\ran= \sum_{i=1}^m \lan e_i(f) \Phi (X^T)+f\Phi (\bar\nabla_{e_i}X)^T,e_i\ran\nonumber\\ &=&f\m D_{\Phi}X + \sum_{i=1}^m e_i(f) \lan \Phi (X^T), e_i\ran= f\m D_{\Phi}X +\lan \Phi (X^T),\nabla f\ran,
\end{eqnarray*}
which proves Item \ref{phi function}.
Using \ref{phi tangent}, (\ref{divergent}) and (\ref{div}) we obtain that
\begin{equation}\label{D phi}
\m D_{\Phi} X= \dv_M(\Phi X^T) - \lan (H_{\Phi}+\dv_M\Phi) , X\ran,  
\end{equation}
which proves Item \ref{divphi}.
\end{proof}
\begin{remark}
By \ref{divphi} and the divergence theorem we have that
\begin{equation}\label{divergence}
\int_D \m D_{\Phi} X =  \int_{\p D} \lan \Phi X^T,\nu \ran  - \int_D \lan (H_{\Phi} + \dv_M\Phi), X \ran,
\end{equation}
where $D$ is a bounded domain with Lipschitz boundary $\p D$ and $\nu$ is the exterior conormal along $\p D$. Equation (\ref{divergence}) holds in the sense of the trace.
\end{remark}

\begin{example}\label{radialvector}
Take $p\in M$ and let $x=(x^1,\ldots,x^n)$ be a coordinate system in a normal neighborhood $V$ of $p$ in $\bar M$. Write $\frac{\p^T}{\p x^i}=(\frac{\p}{\p x^i})^T=a_i^j\frac{\p}{\p x^j}$ and $\Phi(\frac{\p^T}{\p x^i})=\bar\phi_i^j\frac{\p}{\p x^j}$. We have that 
\begin{equation*}
\Phi\left(\bar\na_{\frac{\p^T}{\p x^i}}\frac{\p}{\p x^j}\right)^T=a_i^k\bar \Ga_{kj}^l \Phi(\frac{\p^T}{\p x^l})=a_i^k\bar \Ga_{kj}^l \bar\phi_l^s \frac{\p}{\p x^s},
\end{equation*} 
where $\bar\Ga_{ij}^k$ are the Christoffel symbols of the Riemannian connection $\bar\na$ of $\bar M$.
This implies that $\m D_{\Phi}(\frac{\p}{\p x^j})=a_i^k\bar \Ga_{kj}^l \bar\phi_l^i$. Using \ref{phi function}, it holds the following: 
\begin{equation*}
\m D_{\Phi}(h\frac{\p}{\p x^j})=h a_i^k\bar \Ga_{kj}^l \bar \phi_l^i + \bar\phi_j^k \frac{\p h}{\p x^k},
\end{equation*}
for all $h\in C^1(M)$.
As a particular case, consider the vector field $Y=r\bar\na r$, where $r:\bar M\to [0,\infty)$ is the distance function $r(q)=d_{\bar M}(q,q_0)$, for some $q_0\in \bar M$. Using that $r^2(q)=(x^1(q))^2+\ldots+(x^n(q))^2$, for all $q\in V$, we have that $Y(q)=\frac{1}{2}\bar\na r^2=x^j(q)\frac{\p}{\p x^j}$. Using that $Y= x^j\frac{\p}{\p x^j}$ and $\tr_M\Phi=\bar\phi^i_i$, we obtain that
\begin{equation*}
\m D_{\Phi}(Y)=x^j a_i^k\bar \Ga_{kj}^l \bar\phi_l^i + \bar\phi_j^k \frac{\p x^j}{\p x^k}=\tr_M \Phi + x^j a_i^k\bar \Ga_{kj}^l \bar\phi_l^i.
\end{equation*}
In particular, if $\bar M$ is flat then $\m D_{\Phi}(Y)=\tr\Phi$. Since $Y=2^{-1}\bar\na r^2$, using \ref{phi tangent} and (\ref{chen-yau}), we obtain that  
\begin{eqnarray*}
2^{-1}\square_{\Phi} r^2 &=& \m D_{\Phi} Y+ r \lan H_{\Phi},\bar\na r\ran\\ &=& \tr\Phi + r\lan H_{\Phi},\bar\na r\ran + x^j a_i^k\bar \Ga_{kj}^l \bar\phi_l^i.\nonumber
\end{eqnarray*}
\end{example}

Let $\m K:\real \to \real$ be an even continuous function. Let $h$ be a solution of the Cauchy problem
\begin{equation}\label{cauchy-K}\left\{
\begin{array}{l}
h''+\m Kh=0,\\
h(0)=0, h'(0)=1.
\end{array}
\right.
\end{equation}
Let $I=(0,r_0)$ be the maximal interval where $h$ is positive. 

Let $\bar M$ be a Riemannian manifold and $\m B=\m B_{r_0}(q_0)$ the geodesic ball of $\bar M$ with center $q_0$ and radius $r_0>0$. 
Consider the radial vector field $$X=h(r)\bar \na r,$$ defined on $\m B\cap V$, where $r=d_{\bar M}(\cdot\,,q_0)$, for some fixed point $q_0\in M$, and $V$ is a normal neighborhood of $q_0$ in $\bar M$. The result below generalizes Example \ref{radialvector} and will be useful is the proof of Theorem \ref{thm_asz}.
\begin{proposition}\label{hessiana-K} Let $f:M^m\to \bar M$ be an isometric immersion of a manifold $M$ in the manifold $\bar M$. Assume that the radial curvature of $\bar M$ with respect to the basis point $q_0\in M$ satisfies $\bar K_{\rad}\leq \m K(r)$ in $\m B\cap V$. Let $\Phi:TM\to TM$ be a symmetric operator.  Assume that one of the following conditions hold:
\begin{enumerate}[(i)]
\item\label{positive} $\Phi$ is positive-semidefinite; or
\item\label{curvature} $\bar K_{\rad}= \m K(r)$,
\end{enumerate}
Then, it holds that
\begin{equation}\label{prop_comparison}
\m D_{\Phi} X\geq h'(r)\tr\Phi.
\end{equation} 
Moreover, the equality occur if \ref{curvature} holds.
\end{proposition}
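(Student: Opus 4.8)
The plan is to express $\m D_{\Phi}X$ at a point in terms of the Hessian of $r$ in $\bar M$ and then invoke the classical Hessian comparison theorem. Fix $q\in M\cap\m B$ and choose an orthonormal basis $\{e_1,\dots,e_m\}$ of $T_qM$ formed by eigenvectors of $\Phi(q)$, say $\Phi e_i=\mu_i e_i$. First I would differentiate the radial field $X=h(r)\bar\na r$ along directions tangent to $M$: for $Z\in TM$,
\[
\bar\na_Z X=h'(r)\,\langle\bar\na r,Z\rangle\,\bar\na r+h(r)\,\bar\na_Z\bar\na r .
\]
Since $\Phi$ is symmetric with $\Phi(TM)\subset TM$ we have $\langle\Phi(\bar\na_{e_i}X)^{T},e_i\rangle=\langle\bar\na_{e_i}X,\Phi e_i\rangle$, and since $e_i$ and $\Phi e_i$ are tangent their pairings with $\bar\na r$ coincide with those of $\na r:=(\bar\na r)^{T}$; substituting the formula above into the definition of $\m D_{\Phi}X$ then gives
\[
\m D_{\Phi}X=h'(r)\sum_{i=1}^{m}\mu_i\langle\na r,e_i\rangle^2+h(r)\sum_{i=1}^{m}\mu_i\,\bar\na^2 r(e_i,e_i),
\]
where $\bar\na^2 r$ denotes the Hessian of $r$ in $\bar M$. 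Note that $\sum_i\mu_i\langle\na r,e_i\rangle^2=\langle\Phi\na r,\na r\rangle$ and $\sum_i\mu_i=\tr\Phi$.

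Next I would apply the Hessian comparison theorem: since $\bar K_{\rad}\le\m K(r)$ on $\m B\cap V$ and $h$ solves (\ref{cauchy-K}) and is positive on $I=(0,r_0)$, at every point of $\m B\cap V$ one has, for all unit $v\in T\bar M$,
\[
\bar\na^2 r(v,v)\ \ge\ \frac{h'(r)}{h(r)}\bigl(1-\langle\bar\na r,v\rangle^2\bigr),
\]
with equality whenever $\bar K_{\rad}=\m K(r)$. Under hypothesis (i) each $\mu_i\ge0$ and each $1-\langle\bar\na r,e_i\rangle^2\ge0$, so this inequality may be multiplied by $\mu_i$ and summed over $i$ (no sign hypothesis on $h'$ is needed) to yield $\sum_i\mu_i\,\bar\na^2 r(e_i,e_i)\ge\frac{h'(r)}{h(r)}\bigl(\tr\Phi-\langle\Phi\na r,\na r\rangle\bigr)$. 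Plugging this into the displayed formula for $\m D_{\Phi}X$, the term $h'(r)\langle\Phi\na r,\na r\rangle$ cancels and I obtain $\m D_{\Phi}X\ge h'(r)\tr\Phi$. When hypothesis (ii) holds instead, the comparison is an equality valid for every $v$ (and for an arbitrary basis, with no positivity of $\Phi$ required), so the very same computation produces the identity $\m D_{\Phi}X=h'(r)\tr\Phi$.

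I do not anticipate a genuine obstacle. The one point requiring care is the correct invocation of the Hessian comparison, which is legitimate only inside a normal neighborhood and for $r<r_0$ — where $r$ is smooth and $h>0$ — that is, precisely on $\m B\cap V$ as in the statement; and one should observe that transferring the pointwise Hessian estimate through $\Phi$ genuinely uses $\Phi\ge0$, which is exactly why the equality assertion is derived from the sharp Hessian identity under (ii) rather than from the inequality. Everything else is the routine computation of $\bar\na_Z X$ and elementary bookkeeping of traces.
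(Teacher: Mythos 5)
Your argument is correct and follows essentially the same path as the paper: diagonalize $\Phi$ at the point, express $\m D_\Phi X$ in terms of $\lan\Phi\na r,\na r\ran$ and $\sum_i\mu_i\,\bar\na^2 r(e_i,e_i)$, apply the Hessian comparison theorem, and observe the cancellation of the $\lan\Phi\na r,\na r\ran$ terms. The only cosmetic difference is that you differentiate $X=h(r)\bar\na r$ directly by the product rule, where the paper factors the computation through its Item (B) of Proposition 2.1 and through an explicit model metric $d\rho^2+h(\rho)^2d\om^2$; your remark that no sign hypothesis on $h'$ is needed (only $h>0$ and $\Phi\ge 0$) is also accurate and matches what the paper's computation actually uses.
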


\begin{proof} 
Take $q\in M\cap V$. Let $\xi=\{e_1,\ldots,e_m\}$ be an orthonormal basis of $T_qM$ by eigenvectors of $\Phi$ and $\{\la_1,\ldots,\la_m\}$ the corresponding eigenvalues. We extend the basis $\xi$ to an orthonormal frame on a neighborhood of $q$ in $M$. Then, at the point $q$, we have
\begin{equation*} 
\m D_{\Phi} \bar \na r=\sum_{i=1}^m \lan \Phi(\na_{e_i} \bar\na r)^T,e_i\ran=\sum_{i=1}^m \la_i \lan \na_{e_i} \bar\na r, e_i\ran=\sum_{i=1}^m \la_i\,(\hs_{\bar M}r)_q(e_i,e_i),
\end{equation*}
where $\hs_{\bar M}r$ is the Hessian of $r$. 

Let $\ti M$ be the Euclidean ball of $\real^m$ of radius $r_0$ and center at the origin $0$ endowed with the metric $\ti g$, which in polar coordinates can be written as 
\begin{equation*}
\ti g= d\rho^2+h(\rho)^2d\om^2,   
\end{equation*}
where $d\om^2$ represents the standard metric on the Euclidean unit sphere $S^{m-1}$. Consider the distance function $\ti r=d_{\ti M}(\cdot\,,0)$. Then, for $x=\rho\om$ with $\rho>0$ and $\om\in S^{m-1}$, the Hessian of $\ti r$ satisfies:
\begin{equation*}
\hs(\ti r)=\frac{h'}{h}(\ti g-d\ti r\otimes d\ti r),
\end{equation*}
Furthermore, the function $\m K(\ti r)=-\frac{h''(\ti r)}{h(\ti r)}$ is the radial curvature of $\ti M$ with respect to the basis point $0$. Thus since the radial curvature of $\bar M$ with respect to some basis point $q_0$ satisfies $\bar K_{\rad}\leq \m K(r)$ it follows from the Hessian comparison theorem (see Theorem A page 19 of \cite{gw}) the following:
\begin{equation}\label{comparison}
(\hs_{\bar M}r)(e_i,e_i)\geq \frac{h'(r)}{h(r)}\left(1-\lan \na r, e_i\ran^2\right).
\end{equation}
Moreover, the equality in (\ref{comparison}) holds when $\bar K_{\rad}=\m K(r)$. Since $\lan \Phi e_i,e_j\ran_q=\la_i\de_{ij}$, for all $i,j$, we obtain  
\begin{eqnarray}\label{compar}
\m D_{\Phi}X&=& \m D_{\Phi} (h(r)\bar \na r)=h(r)\m D_{\Phi}\bar\na r+ h'(r)\lan \na r,\Phi\na r\ran\\ &=& h(r)\sum_{i=1}^m \la_i\,(\hs_{\bar M}r)_q(e_i,e_i)+ h'(r)\lan \na r,\Phi\na r\ran.\nonumber
\end{eqnarray}
Using that the hypothesis \ref{positive} and \ref{curvature}, it follows from (\ref{comparison}) and (\ref{compar}) that
\begin{eqnarray}\label{compar2}
\m D_{\Phi}X &\geq& h(r)\frac{h'(r)}{h(r)} \sum_{j=1}^m \la_j \left(1-\lan\na r, e_j\ran^2\right) + h'(r)\lan \na r,\Phi\na r\ran\\&=& h'(r)\left(\tr\Phi-\lan\na r, \Phi\na r \ran\right) + h'(r)\lan \na r,\Phi\na r\ran\nonumber\\&=&h'(r)\tr\Phi.\nonumber
\end{eqnarray}
Moreover, the equality in (\ref{compar2}) holds when $\bar K_{\rad}=\m K(r)$.
\end{proof}
\begin{corollary}\label{theodiv} Under the hypotheses of Proposition \ref{hessiana-K} we have that
\begin{equation*} 
\int_{\p D} h(r)\lan\Phi \na r,\nu \ran \geq \int_D \left(h'(r)\tr\Phi - h(r)|H_\Phi+\dv_M\Phi|\right),
\end{equation*}
where $D$ is a bounded domain compactly contained in $V\cap \m B$ with Lipschitz boundary $\p D$, and $\nu$ is the exterior conormal along $\p D$.
\end{corollary}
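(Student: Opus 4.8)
The plan is to feed the radial field $X=h(r)\bar\na r$, defined on $V\cap\m B$, into the divergence identity (\ref{divergence}) (equivalently, Proposition \ref{phi vector}\ref{divphi} followed by the divergence theorem), and then to replace $\m D_\Phi X$ by the pointwise lower bound of Proposition \ref{hessiana-K}. First I would record the two elementary facts about $X$ that make everything fit: along $M$ its tangential component is $X^T=h(r)\na r$, and since $|\bar\na r|=1$ and $h(r)\ge 0$ on $\m B$ (because $h>0$ on $(0,r_0)$ and $h(0)=0$) one has $|X|=h(r)$. In particular $X$ extends continuously across $q_0$ with $X(q_0)=0$, and $X$ is of class $C^1$ on $V\setminus\{q_0\}$.

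Applying (\ref{divergence}) on the domain $D$ (which has Lipschitz boundary and lies compactly in $V\cap\m B$, so that $r$, $\bar\na r$ and the Hessian comparison of Proposition \ref{hessiana-K} are available on $\overline D$) and rearranging gives
\[
\int_{\p D}\lan\Phi X^T,\nu\ran=\int_D\m D_\Phi X+\int_D\lan H_\Phi+\dv_M\Phi,\,X\ran .
\]
On the right-hand side I would now estimate term by term. By Proposition \ref{hessiana-K} (this is the only place hypothesis \ref{positive} or \ref{curvature} is used), $\m D_\Phi X\ge h'(r)\,\tr\Phi$ on $D$; and by the Cauchy--Schwarz inequality together with $|X|=h(r)\ge 0$,
\[
\lan H_\Phi+\dv_M\Phi,\,X\ran\ge -\,|H_\Phi+\dv_M\Phi|\,|X|=-\,h(r)\,|H_\Phi+\dv_M\Phi| .
\]
Substituting both bounds, and using $\lan\Phi X^T,\nu\ran=h(r)\lan\Phi\na r,\nu\ran$ on $\p D$, yields precisely the claimed inequality.

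The argument is essentially bookkeeping built on Propositions \ref{phi vector} and \ref{hessiana-K}, so there is no genuine obstacle; the one point needing care is the mild singularity of $X$ at the base point $q_0$ in case $q_0\in D$. I would handle it by an exhaustion: apply the identity above on $D\setminus\overline{B_\de(q_0)}$, where $B_\de(q_0)$ is the geodesic ball of $M$, on which $X$ is $C^1$, and let $\de\to 0$. The only new boundary contribution, over $\p B_\de(q_0)$, is bounded by $h(\de)\,\bigl(\sup_{\overline D}\|\Phi\|\bigr)\,\vol(\p B_\de(q_0))=O(\de)\cdot O(\de^{m-1})=O(\de^m)\to 0$ (using $d_{\bar M}(\cdot\,,q_0)\le d_M(\cdot\,,q_0)$ and $h(r)=O(r)$ near $0$), while the two interior integrals converge to those over $D$ by dominated convergence, since $h(r)\,\m D_\Phi\bar\na r$ stays bounded near $q_0$ (the $1/r$ blow-up of $\hs_{\bar M}r$ is absorbed by $h(r)=O(r)$).
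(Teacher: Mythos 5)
Your proof is correct and follows essentially the same route as the paper's: apply Proposition \ref{phi vector}\ref{divphi} together with the divergence theorem to $X=h(r)\bar\na r$, lower-bound $\m D_\Phi X$ via Proposition \ref{hessiana-K}, and estimate the remaining term by Cauchy--Schwarz using $|X|=h(r)$. The exhaustion around $q_0$ is a reasonable instinct but turns out to be unnecessary: since $\m K$ is even, the solution $h$ of (\ref{cauchy-K}) is odd, so $h(r)/r$ is a smooth function of $r^2$, and hence $X=\frac{h(r)}{r}\cdot\frac{1}{2}\bar\na(r^2)$ is already a genuine $C^1$ vector field across $q_0$.
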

\begin{proof} Using \ref{divphi} we have that $\dv_M(\Phi X^T)=\m D_{\Phi}X + \lan H_{\Phi}+\dv\Phi, X\ran$. Since $|\bar\na r|=1$, 
using (\ref{prop_comparison}) and the divergence theorem (see for instance \cite{fe}), we obtain that
\begin{eqnarray}
\int_{\p D} h(r)\lan\Phi \na r,\nu \ran &\geq& \int_D h'(r)\tr\Phi + \int_D h(r)\lan H_{\Phi}+\dv\Phi,\bar\na r\ran\\&\geq& \int_D \left(h'(r)\tr\Phi - h(r)|H_\Phi+\dv_M\Phi|\right).\nonumber
\end{eqnarray}
Corollary \ref{theodiv} is proved.
\end{proof}
We denote by $\bar R_{q_0}$ the injectivity radius of $\bar M$ at the point $q_0$ and $B_\mu(q)$ the geodesic ball of $M$ with center $q$ and radius $\mu$. Let $\al:[0,\infty)\to [0,\infty)$ be a nonnegative $C^1$ function. We consider the following positive number: 
\begin{equation}\label{def-mu-al}
\mu_{\m K,\al}=\sup\,\Big\{ t\in (0, r_0);  \  \frac{h'(t)}{h(t)}>\al(t) \  \mbox{ and } \  \al'(t) \geq -\frac{h'(t)^2}{h(t)^2} - \m K(t)\Big\}.
\end{equation}


\section{proof of Theorem \ref{thm_asz}, Theorem \ref{corol-dist} and Corollary \ref{corol-dist}.}\label{sec-the-al}

The main tool of this section is the following result:

\begin{theorem}\label{theo-al} Let $f:M\to \bar M$ be an isometric immersion of a complete noncompact manifold $M$ in a manifold $\bar M$. Assume that the radial curvature of $\bar M$ with base point in some $q_0\in f(M)$ satisfies $\bar K_{\rad}\leq \m K(r)$, where $r=d_{\bar M}(\cdot\,,q_0)$ and $\m K:\real\to \real$ is an even continuous function. Let $\Phi:TM\to TM$ be a positive-semidefinite symmetric operator such that $\tr\Phi(q_0)>0$. Assume further that 
\begin{equation}\label{hip-theo-alp}
|H_\Phi + \dv \Phi|\leq \al(r)\, \tr\Phi,
\end{equation} 
where $\al:[0,\infty)\to [0,\infty)$ is a nonnegative $C^1$ function. Then, for each $0<\mu_0<\min\{\mu_{\m K,\al},\bar R_{q_0}\}$, there exists a positive constant $\La=\La(q_0,\mu_0,M)$ satisfying 
\begin{equation*}
\int_{B_\mu(q_0)} \tr\Phi \geq \La \int_{\mu_0}^\mu e^{-\int_{\mu_0}^\tau \al(s)ds}d\tau,
\end{equation*} 
for all $\mu_0\leq \mu<\min\{\mu_{\m K,\al}, \bar R_{q_0}\}$.
\end{theorem}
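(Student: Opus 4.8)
The plan is to feed Corollary~\ref{theodiv} a well-chosen family of domains $D$ and integrate the resulting differential inequality. Since $M$ is complete and noncompact, fix a minimizing geodesic ray in $M$ emanating from $q_0$ and work with the geodesic balls $B_\mu=B_\mu(q_0)$ of $M$; for $\mu$ below $\bar R_{q_0}$ the ball $B_\mu$ is (after the usual identification $f(q)\equiv q$) compactly contained in the normal neighborhood $V$ and inside $\m B=\m B_{r_0}(q_0)$, because $r=d_{\bar M}(\cdot,q_0)\le d_M(\cdot,q_0)$, so $r<\mu<\min\{\mu_{\m K,\al},\bar R_{q_0}\}\le r_0$ on $B_\mu$. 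Apply Corollary~\ref{theodiv} with $D=B_\mu$ (for a.e.\ $\mu$ the boundary sphere $\p B_\mu$ is Lipschitz, by Sard applied to the distance function $\rho=d_M(\cdot,q_0)$), using hypothesis (\ref{hip-theo-alp}):
\begin{equation*}
\int_{\p B_\mu} h(r)\lan \Phi\na r,\nu\ran \;\geq\; \int_{B_\mu}\bigl(h'(r)-h(r)\,\al(r)\bigr)\tr\Phi .
\end{equation*}

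The next step is to bound the left-hand side by a derivative of the quantity $V(\mu):=\int_{B_\mu}\tr\Phi$. Since $\Phi$ is positive-semidefinite, $|\lan\Phi\na r,\nu\ran|\le \lan\Phi\nu,\nu\ran^{1/2}\lan\Phi\na r,\na r\rangle^{1/2}\le \tr\Phi$ (Cauchy--Schwarz for the bilinear form $\Phi$, together with $|\na r|\le|\bar\na r|=1$ and $|\nu|=1$); hence, by the coarea formula, $\int_{\p B_\mu} h(r)\lan\Phi\na r,\nu\ran\le \sup_{\p B_\mu}h(r)\cdot V'(\mu)$ for a.e.\ $\mu$. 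On $\p B_\mu$ we have $r\le\mu$, and on the range $0<\mu<\mu_{\m K,\al}\le r_0$ the function $h$ is positive; moreover $h$ is increasing there — indeed $h'(0)=1>0$ and, as long as $h'/h>\al\ge0$, $h'>0$ — so $\sup_{\p B_\mu}h(r)\le h(\mu)$. Similarly, on $B_\mu$ the integrand $h'(r)-h(r)\al(r)=h(r)\bigl(h'(r)/h(r)-\al(r)\bigr)$: here I want to compare this, pointwise in $r\in(0,\mu)$, with the analogous expression at radius $\mu$. This is exactly where the definition of $\mu_{\m K,\al}$ enters: the condition $\al'(t)\ge -h'(t)^2/h(t)^2-\m K(t)=-h'(t)^2/h(t)^2+h''(t)/h(t)$ says precisely that $t\mapsto h'(t)/h(t)-\al(t)$ is nondecreasing on $(0,\mu_{\m K,\al})$ (its derivative is $h''/h-(h'/h)^2-\al'\le -\m K-(h'/h)^2+h'^2/h^2+\m K$... wait, sign-check: $\frac{d}{dt}(h'/h)=h''/h-(h'/h)^2=-\m K-(h'/h)^2$, so $\frac{d}{dt}(h'/h-\al)=-\m K-(h'/h)^2-\al'\le 0$ iff $\al'\ge -\m K-(h'/h)^2$, which is the stated inequality — so in fact $h'/h-\al$ is nonincreasing). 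The safe conclusion is that on $(0,\mu_{\m K,\al})$ one has $h'/h-\al\ge h'(\mu)/h(\mu)-\al(\mu)>0$ for $r\le\mu$, whence $h'(r)-h(r)\al(r)\ge h(r)\bigl(h'(\mu)/h(\mu)-\al(\mu)\bigr)\ge 0$. To get $V$ itself on the right I then need $h(r)\ge$ something uniform; but $h$ is only bounded below near $0$ by $\sim r$, so I instead split $B_\mu=B_{\mu_0}\cup(B_\mu\setminus B_{\mu_0})$ and throw away the inner part, using that on $B_\mu\setminus B_{\mu_0}$ the (nonnegative) integrand is $\ge h(r)\cdot(h'(\mu)/h(\mu)-\al(\mu))\,\tr\Phi$ and $h(r)\ge \min_{[\text{?}]}$ — this is not quite uniform either, so the cleaner route is:

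Reorganize as a differential inequality for $V$. Combining the two bounds,
\begin{equation*}
h(\mu)\,V'(\mu)\;\geq\;\Bigl(\tfrac{h'(\mu)}{h(\mu)}-\al(\mu)\Bigr)\int_{B_\mu}h(r)\,\tr\Phi\;\geq\;\Bigl(\tfrac{h'(\mu)}{h(\mu)}-\al(\mu)\Bigr)h(\mu_0)\,\bigl(V(\mu)-V(\mu_0)\bigr)
\end{equation*}
for a.e.\ $\mu\in(\mu_0,\min\{\mu_{\m K,\al},\bar R_{q_0}\})$, where in the last step I used $h(r)\ge h(\mu_0)$ for $r\ge\mu_0$ (monotonicity of $h$) on $B_\mu\setminus B_{\mu_0}$ and discarded $B_{\mu_0}$. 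Set $W(\mu)=V(\mu)-V(\mu_0)$; then $W(\mu_0)=0$, $W\ge 0$, and $W'(\mu)\ge h(\mu_0)\bigl(h'(\mu)/h(\mu)-\al(\mu)\bigr)W(\mu)$ a.e., which integrates (Gronwall) to $W(\mu)\ge 0$ only — not enough. The fix, and the genuinely delicate point, is that one must not discard $V(\mu_0)$: since $\tr\Phi(q_0)>0$ and $\tr\Phi$ is continuous (it is $C^1$), $V(\mu_0)=\int_{B_{\mu_0}}\tr\Phi>0$ is a fixed positive constant, so keeping it gives $W'(\mu)\ge c(\mu)\,(W(\mu)+V(\mu_0))$ with $c(\mu)=h(\mu_0)\bigl(h'(\mu)/h(\mu)-\al(\mu)\bigr)\ge 0$; equivalently $(W+V(\mu_0))'\ge c(\mu)(W+V(\mu_0))$, so $W(\mu)+V(\mu_0)\ge V(\mu_0)\exp\!\bigl(\int_{\mu_0}^\mu c\bigr)$, still exponential rather than the claimed $\int_{\mu_0}^\mu e^{-\int\al}$.

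Hence the correct bookkeeping is the reverse inequality on the boundary term. Re-examining Corollary~\ref{theodiv}: the inequality there has $\p D$-term $\ge$ interior term, and the boundary term should be bounded \emph{above} to extract $V'$, giving a \emph{lower} bound on $V'$ only through a chain of the form $V'(\mu)\ge \frac{1}{h(\mu)}\cdot(\text{interior}) \ge \frac{h'(\mu)-h(\mu)\al(\mu)}{h(\mu)}\,V(\mu)$ when one can bound the interior integrand below by (its value at $\mu$)$\times \tr\Phi$ — but $h'(r)-h(r)\al(r)$ need not be monotone, only $h'/h-\al$ is. So one instead bounds $\int_{B_\mu}(h'(r)-h(r)\al(r))\tr\Phi = \int_{B_\mu}h(r)(h'/h-\al)(r)\tr\Phi \ge (h'/h-\al)(\mu)\int_{B_\mu}h(r)\tr\Phi$ and then uses $h(r)\le h(\mu)$ on $B_\mu$... which points the inequality the wrong way for a lower bound on $V$. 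The resolution — and this is the step I expect to be the main obstacle — is to instead define $G(\mu)=\int_{B_\mu}h(r)\,\tr\Phi$, observe $G'(\mu)=h(\mu)V'(\mu)+\int_{\p B_\mu}(\text{coarea correction})$, no—cleanly: work directly with the inequality
\begin{equation*}
\int_{\p B_\mu}h(r)\lan\Phi\na r,\nu\ran \;\geq\; \Bigl(\tfrac{h'}{h}-\al\Bigr)(\mu)\,G(\mu),\qquad G(\mu):=\int_{B_\mu}h(r)\,\tr\Phi ,
\end{equation*}
bound the left side by $h(\mu)\,V'(\mu)$ and bound $G(\mu)\le h(\mu)V(\mu)$ is again wrong-way; instead bound $G(\mu)\ge h(\mu_0)(V(\mu)-V(\mu_0))$ and, on the left, use $\int_{\p B_\mu}h(r)\lan\Phi\na r,\nu\ran\le h(\mu)\int_{\p B_\mu}\tr\Phi$ together with the fact (which I will need to justify via the coarea formula applied to $G$) that $G'(\mu)=h(\mu)\int_{\p B_\mu}\tr\Phi$ for a.e.\ $\mu$. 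This yields the autonomous inequality $G'(\mu)\ge h(\mu)\cdot\frac{1}{h(\mu)}\cdot h(\mu)\bigl(\tfrac{h'}{h}-\al\bigr)(\mu)\,\cdot\frac{G(\mu)}{h(\mu)}$... I realize the clean statement is: $G'(\mu)\ge \bigl(\tfrac{h'(\mu)}{h(\mu)}-\al(\mu)\bigr)h(\mu)\cdot\frac{G(\mu)}{h(\mu_0)}$ — no. Let me just commit to the structurally correct plan: one shows $\frac{d}{d\mu}\log G(\mu)\ge \frac{h'(\mu)}{h(\mu)}-\al(\mu)$ does NOT hold; rather $V$ satisfies $V'(\mu)\ge \bigl(\tfrac{h'(\mu)}{h(\mu)}-\al(\mu)\bigr)\cdot\frac{h(\mu_0)}{h(\mu)}\,\bigl(V(\mu)-V(\mu_0)\bigr)$, and since $V(\mu_0)>0$ this forces, by comparison with the ODE $y'=\bigl(\tfrac{h'}{h}-\al\bigr)\tfrac{h(\mu_0)}{h}\,y$, the growth $V(\mu)-V(\mu_0)+V(\mu_0) \ge V(\mu_0)\exp\!\int_{\mu_0}^\mu\bigl(\tfrac{h'}{h}-\al\bigr)\tfrac{h(\mu_0)}{h}$; to reconcile with the claimed $\int_{\mu_0}^\mu e^{-\int_{\mu_0}^\tau\al}\,d\tau$ I use $e^{\int_{\mu_0}^\tau h'/h}=h(\tau)/h(\mu_0)$, so the exponent telescopes and one recovers $V(\mu)\ge \La\int_{\mu_0}^\mu e^{-\int_{\mu_0}^\tau\al(s)ds}d\tau$ with $\La=\La(q_0,\mu_0,M)$ absorbing $h(\mu_0)$, $V(\mu_0)$, and $\sup_{[\mu_0,\cdot]}$-type constants; carrying out this integration carefully, and in particular justifying the a.e.\ differentiability of $\mu\mapsto V(\mu)$ and the coarea identities together with the discarding of $B_{\mu_0}$, is the technical heart of the argument and the step most likely to require care.
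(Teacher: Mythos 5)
Your opening moves match the paper's: apply Corollary~\ref{theodiv} to $D=B_\mu(q_0)$, bound the boundary integrand by $\lan\Phi\na r,\nu\ran\le\tr\Phi$, and use the coarea formula together with the observation that $\al'\ge -h'^2/h^2-\m K$ makes $h'/h-\al$ nonincreasing on $(0,\mu_{\m K,\al})$. Where you go wrong is in the choice of quantity for which to set up the Gronwall inequality. You try repeatedly to force a differential inequality directly for $V(\mu)=\int_{B_\mu}\tr\Phi$ (or for $V(\mu)-V(\mu_0)$), and every attempt introduces a parasitic factor $h(\mu_0)/h(\mu)$ that decays. Your final claimed inequality $V'(\mu)\ge\bigl(\tfrac{h'}{h}-\al\bigr)(\mu)\,\tfrac{h(\mu_0)}{h(\mu)}\bigl(V(\mu)-V(\mu_0)\bigr)$, even after adding back $V(\mu_0)$, integrates only to $V(\mu)\ge V(\mu_0)\exp\int_{\mu_0}^\mu\bigl(\tfrac{h'}{h}-\al\bigr)\tfrac{h(\mu_0)}{h}$, and the ``telescoping'' you invoke at the end is an error: the factor $h(\mu_0)/h$ sits \emph{inside} the integral, and $\int_{\mu_0}^\mu\tfrac{h'}{h}\cdot\tfrac{h(\mu_0)}{h}\,d\tau=1-\tfrac{h(\mu_0)}{h(\mu)}<1$, so the exponent stays bounded and you obtain no growth at all. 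This is a genuine gap, not a bookkeeping nuisance.

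The missing idea is to run Gronwall not on $V$ but on the $h$-weighted ball integral $G(\mu)=\int_{B_\mu}\bigl(\tfrac{h'}{h}-\al\bigr)(r)\,h(r)\,\tr\Phi$, together with the $h$-weighted sphere integral $F(\mu)=\int_{\p B_\mu}h(r)\,\tr\Phi$. Corollary~\ref{theodiv} plus $\lan\Phi\na r,\nu\ran\le\tr\Phi$ gives exactly $F(\mu)\ge G(\mu)$, and $G'(\mu)=\int_{\p B_\mu}\bigl(\tfrac{h'}{h}-\al\bigr)(r)h(r)\tr\Phi\ge\bigl(\tfrac{h'}{h}-\al\bigr)(\mu)F(\mu)\ge\bigl(\tfrac{h'}{h}-\al\bigr)(\mu)G(\mu)$, using $r\le\mu$ on $\p B_\mu$ and the monotonicity you already established. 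This is a clean Gronwall inequality for $\log G$ \emph{without} the $h(\mu_0)/h(\mu)$ factor; it integrates to $G(\mu)\ge\tfrac{G(\mu_0)}{h(\mu_0)}\,h(\mu)\,e^{-\int_{\mu_0}^\mu\al}$. Only at the very end do you pass to $f(\mu)=\int_{B_\mu}\tr\Phi$, via $f'(\mu)\ge\tfrac{1}{h(\mu)}F(\mu)\ge\tfrac{1}{h(\mu)}G(\mu)\ge\La\,e^{-\int_{\mu_0}^\mu\al}$ with $\La=G(\mu_0)/h(\mu_0)$; the $h(\mu)$ from the Gronwall bound cancels against the $1/h(\mu)$, and integrating from $\mu_0$ to $\mu$ gives the stated estimate. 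You actually write down the quantity $\int_{B_\mu}h(r)\tr\Phi$ in passing but abandon it; committing to the weighted $G$ and $F$ and exploiting $F\ge G$ is precisely what makes the argument close.
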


\begin{proof} Take $0<\mu<\bar R_0=\min\{\mu_{\m K,\al}, \bar R_{q_0}\}$ and let $B_\mu=B_\mu(q_0)$. Note that the distance function $\rho=d_M(\cdot\,,q_0)$ satisfies $r\leq \rho$. This implies that $B_{\mu}$ is contained in the geodesic ball $\m B_{\bar R_0}(q_0)$ of $\bar M$ with center $q_0$ and radius $\bar R_0$. Since $M$ is a complete noncompact manifold and $\rho$ is a Lipschitz function we obtain that the ball $B_\mu$ is a bounded domain of $M$ with Lipchitz boundary $\p B_\mu\neq \emptyset$. Since $|\na\rho|=1$ a.e. in $B_\mu$, using Corollary \ref{theodiv}, equation (\ref{hip-theo-alp}) and the coarea formula (see for instance \cite{fe} or Theorem 3.1 of \cite{fe2}), we obtain that 
\begin{eqnarray}\label{coarea1}
\int_{\p B_\mu} h(r)\lan\Phi\na r,\nu\ran &\geq& \int_{B_\mu} \Big(\frac{h'(r)-\al(r) h(r)}{h(r)}\Big)h(r)\tr\Phi\\ &=& \int_0^\mu \int_{\p B_{\tau}} \Big(\frac{h'(r)}{h(r)}-\al(r)\Big)h(r)\tr\Phi\nonumber,
\end{eqnarray}
for almost everywhere $0<\mu<\bar R_0$, where $\nu$ is the exterior conormal along $\p B_{\mu}$. 

Take $q\in M$ and let $\{e_1,\ldots,e_m\}\subset T_qM$ be an orthonormal basis by eigenvectors of $\Phi$ at the point $q$. Consider $\{\la_1,\ldots,\la_m\}$ the corresponding eigenvalues. Since $\Phi$ is positive-semidefinite  we have that $\la_i\geq 0$, for all $i$. Since $|\na r|\leq 1$, using Cauchy-Schwartz inequality, we obtain
\begin{equation}\label{cauchy}
\lan \Phi\na r,\nu\ran=\sum_{i=1}^m \la_i\lan \na r, e_i\ran \lan \nu, e_i\ran \leq \sum_{i=1}^m \la_i |\na r||\nu|=(\tr\Phi) |\na r|\leq \tr\Phi.
\end{equation}
Thus, it follows from (\ref{coarea1}) that 
\begin{equation}\label{decreas}
\int_{\p B_\mu} h(r)\tr\Phi\geq \int_0^\mu \int_{\p B_{\tau}}\Big(\frac{h'(r)}{h(r)}-\al(r)\Big) h(r)\tr\Phi,
\end{equation} 
for a.e.  $0<\mu<\bar R_0$. 

We define the following functions
\begin{equation}\label{def-FG}
\begin{array}{l}
F: \mu\in (0,\bar R_0)\mapsto F(\mu)=\int_{\p B_\mu} h(r)\tr\Phi;\\

G:\mu \in (0,\bar R_0)\mapsto G(\mu)=\int_0^\mu \int_{\p B_{\tau}}\Big(\frac{h'(r)}{h(r)}-\al(r)\Big) h(r)\tr\Phi,
\end{array}
\end{equation}
It follows from (\ref{decreas}) that 
\begin{equation}\label{F-geq-G}
F(\mu)\geq G(\mu), 
\end{equation}
for a.e. $0<\mu<\bar R_0$.

Note that $\al'(t) \geq -\frac{h'(t)^2}{h(t)^2} - \m K(t)$ is equivalent to say that $\Big(\frac{h'(t)}{h(t)}-\al(t)\Big)'\leq 0$. Thus, by hypothesis, the function $t\in (0,\mu_{\m K,\al})\mapsto \frac{h'(t)}{h(t)}-\al(t)$ is positive and non-increasing. Since the function $r=d_{\bar M}(\,\cdot\,,q_0)$ satisfies $r\leq \tau$ in $\p B_\tau$ we have that $\frac{h'(r)}{h(r)}-\al(r)\geq \frac{h'(\tau)}{h(\tau)}-\al(\tau)>0$ in $\p B_\tau$, for all $0<\tau\leq \bar R_0$. This implies that $G(\mu)>0$, for all $0<\mu<\bar R_0$, since $G\geq 0$, $G$ is nondecreasing and $G(\mu)>0$, for all $\mu>0$ sufficiently small (recall that $\tr\Phi(q_0)>0$).

Using (\ref{def-FG}) and (\ref{F-geq-G}), we have that
\begin{equation*}
G'(\mu)= \left(\frac{h'(\mu)}{h(\mu)}-\al(\mu)\right)F(\mu)\geq \left(\frac{h'(\mu)}{h(\mu)}-\al(\mu)\right)G(\mu),
\end{equation*}
for a.e. $0<\mu<\bar R_0$.
Thus we obtain
\begin{equation}\label{deriv-ln}
\frac{d}{d\mu} \ln G(\mu)= \frac{G'(\mu)}{G(\mu)}\geq \frac{h'(\mu)}{h(\mu)}-\al(\mu)=\Big(\frac{d}{d\mu}\ln h(\mu)\Big) -\al(\mu),
\end{equation}
for a.e. $0<\mu<\bar R_0$.
Integrating (\ref{deriv-ln}) over $(\mu_0,\mu)$, with $0<\mu_0<\mu$, we obtain that 
\begin{equation*}\label{ln}
\ln \frac{G(\mu)}{G(\mu_0)}\geq \ln\frac{h(\mu)}{h(\mu_0)} - \int_{\mu_0}^\mu \al(s)ds.
\end{equation*}
This implies that
\begin{equation}\label{equ-G}
G(\mu)\geq \La(q_0,\mu_0,M) h(\mu) e^{-\int_{\mu_0}^\mu \al(s)ds},
\end{equation}
where $\La=\La(\mu_0,q_0,M)=\frac{G(\mu_0)}{h(\mu_0)}$.

Now, we define the function $f(\mu)=\int_{B_\mu}\tr\Phi$, with $0<\mu<\bar R_0$. Since $h(r)\leq h(\mu)$ in $\p B_\mu$ it follows from (\ref{F-geq-G}), (\ref{equ-G}) and the coarea formula that
\begin{equation*}
f'(\mu)=\int_{\p B_\mu} \tr\Phi \geq \frac{1}{h(\mu)}\int_{\p B_\mu} h(r)\tr\Phi = \frac{F(\mu)}{h(\mu)}\geq \frac{G(\mu)}{h(\mu)}\geq \La\, e^{-\int_{\mu_0}^\mu \al(s)ds},
\end{equation*} 
for a.e. $0<\mu<\bar R_0$.
Since $f(\mu_0)\geq 0$ we have that
\begin{equation*}
\int_{B_\mu}\tr\Phi=f(\mu)\geq  \La\int_{\mu_0}^\mu e^{-\int_{\mu_0}^\tau \al(s)ds}d\tau.
\end{equation*}
This concludes the proof of Theorem \ref{theo-al}.
\end{proof}

Now we are able to prove Theorem \ref{thm_asz}, Theorem \ref{corol-dist}, Corollary \ref{corol-dist} and Theorem \ref{cor-asz}.
\subsection{Proof of Theorem \ref{thm_asz}}\label{proof-teo-cor}
First we observe that the injectivity radius $\bar R_{q_0}=+\infty$, since $\bar M$ has nonpositive radial curvature with base point $q_0$.
We take the functions $\m K(t)=0$, with $t\in\real$, and $\al(t)=\frac{1}{t+\ep}$, with $t\ge 0$. The function $h(t)=t$, with $t>0$,  is the maximal positive solution of (\ref{cauchy-K}). Furthemore, we have that  $\mu_{\m K,\al}=\infty$. Since $\bar K_{\rad}\leq 0=\m K(r)$ and $\tr\Phi(q_0)>0$, Theorem \ref{theo-al} applies. Thus it holds that $$\int_{B_\mu}\tr\Phi\geq \La \int_{\mu_0}^{\mu} e^{-\int_{\mu_0}^{\tau}\frac{ds}{s+\ep}}d\mu=\La(\mu_0+\ep) \log\Big(\frac{\mu+\ep}{\mu_0+\ep}\Big),$$ for all $0<\mu_0<\mu$, where $\La$ is a positive constant depending only on $q_0$, $\mu_0$ and $M$. This implies that
\begin{equation*}
\liminf_{\mu\to \infty} \frac{1}{\log(\mu)}\int_{B_\mu(q_0)}\tr\Phi>0.
\end{equation*}
Theorem \ref{thm_asz} is proved.

\subsection{Proof of Theorem \ref{cor-asz-phi-min}.} Similarly as in the proof of Theorem \ref{thm_asz} we have that $\bar R_{q_0}$. Consider the function $\m K(t)=0$, with $t\in \real$, and $\al(t)=0$, with $t\geq 0$. We have that $\mu_{\m K,\al}=+\infty$ and Theorem \ref{theo-al} applies. Thus we obtain that 
$\int_{B_\mu(q_0)} \tr\Phi \geq \La (\mu-\mu_0),$
for all $0<\mu_0<\mu$, where $\La$ is a positive constant depending only on $q_0$, $\mu_0$ and $M$. This implies that
\begin{equation*}
\liminf_{\mu\to \infty} \frac{1}{\mu}\int_{B_\mu(q_0)}\tr\Phi\geq \La>0.
\end{equation*}
Theorem \ref{cor-asz-phi-min} is proved.

\subsection{Proof of Corollary \ref{corol-dist}}
Fix $q\in M$. Let $\{E_1,\ldots,E_m\}$ and $\{\bar E_1,\ldots, \bar E_k\}$ be orthonormal frames of $TM$ and $\m D$ defined in a neighborhood $U$ of $q$ in $M$, respectively. Since $P_{\m D}(v)=\lan v,\bar E_l\ran \bar E_l$, for all $v\in TU$, we obtain that
\begin{eqnarray}\label{equ-dist}
\dv P_{\m D}&=& \sum_{i=1}^m (\na_{E_i}P_{\m D})E_i=\sum_{i=1}^m \na_{E_i}(P_{\m D}(E_i))-P_{\m D}(\na_{E_i}E_i)\nonumber\\&=&\sum_{i=1}^m\sum_{l=1}^k \Big(E_i\big(\lan E_i,\bar E_l\ran\big)\Big)\bar E_l + \lan E_i,\bar E_l\ran \na_{E_i}\bar E_l - \lan\na_{E_i}E_i,\bar E_l\ran \bar E_l\nonumber\\ &=& \sum_{i=1}^m\sum_{l=1}^k \lan E_i,\na_{E_i}\bar E_l\ran \bar E_l + \lan E_i,\bar E_l\ran \na_{E_i}\bar E_l\nonumber\\ &=& \sum_{l=1}^k (\dv_M(\bar E_l))\bar E_l + \sum_{l=1}^k \na_{\sum_{i=1}^m\lan E_i,\bar E_l\ran E_i}\bar E_l\nonumber\\&=& \sum_{l=1}^k (\dv_M(\bar E_l))\bar E_l  + \sum_{l=1}^k \na_{\bar E_l} \bar E_l.
\end{eqnarray}

Since the distribution $\m D$ is integrable, there exists an embedded submanifold $S\subset M$ satisfying $q\in S$ and $T_xS=\m D(x)$, for all $x\in S$.  Let $\{\ti E_1,\ldots,\ti E_k\}$ be an orthonormal frame, defined in a small neighborhood $U$ of $q$ in $S$, that is geodesic at $q$ with respect to the connection of $S$, namely, 
\begin{equation}\label{ref-geod-S}
(\na^S_{\ti E_l}{\ti E_s})_q=P_{\m D}(\na_{\ti E_l}{\ti E_s})_q=0,
\end{equation}
for all $l,s=1,\ldots,k$.  Now, let $\{\ti E_{k+1},\ldots,\ti E_m\}$ be an orthonormal frame of the normal bundle $TS^{\perp}$  defined in a small neighborhood of $q$ in $S$, that we can also assume to be $U$. We extend the frame $\{\ti E_1,\ldots,\ti E_m\}$ to an orthonormal frame defined in a small tubular neighborhood $W$ of $U$ in $M$ by parallel transport along minimal geodesics from $U$ to the points of $W$.  In particular, it holds that $(\na_{\ti E_\be} \ti E_l)_x=0$, for all $x\in U$, $l=1,\ldots, k$ and $\be=k+1,\ldots,m$. This fact, togheter with (\ref{ref-geod-S}), imply that
\begin{equation}\label{eq-dist}
(\dv_M(\ti E_l))_q = \sum_{i=1}^k \lan (\na^S_{\ti E_i}\ti E_l)_q,\ti E_i(q)\ran + \sum_{\be=k+1}^m \lan (\na_{\ti E_\be}\ti E_l)_q,\ti E_\be(q)\ran=0,
\end{equation}
for all $l=1,\ldots,k$.
Thus, by (\ref{equ-dist}), (\ref{ref-geod-S}) and (\ref{eq-dist}) we obtain that
\begin{equation}\label{secff1}
(\dv P_{\m D})_q= \sum_{l=1}^k\sum_{\be=k+1}^m \lan (\na_{\ti E_l}{\ti E_l})_q, \ti E_\be(q)\ran \ti E_\be(q) = \sum_{l=1}^k \II_{M}^S (\ti E_l(q),\ti E_l(q)),
\end{equation}
where $\II_M^S$ is the second fundamental form of the submanifold $S$ in $M$. 

On the other hand, the second fundamental form $\ti\II$ of the restriction $f|_S:S\to \bar M$ is given by:
\begin{equation}\label{secff2}
\ti \II(v,v)=\II_M^S(v,v)+\II(v,v)=\II_M^S(v,v)+\II(P_{\m D}v,v),
\end{equation}
for all $v\in T_xS=\m D(x)$, with $x\in S$, where $\II$ denotes the second fundamental form of the immersion $f:M\to \bar M$. Thus, by (\ref{secff1}) and (\ref{secff2}), we have that  
\begin{equation}\label{minS}
\tr\ti\II=\dv P_{\m D} + H_{\m P}.
\end{equation}
By hypothesis the isometric immersion $f|_S:S\to \bar M$ is minimal. Thus, by (\ref{minS}), it holds that $\tr\ti\II=\dv P_{\m D} + H_{\m P}=0$. Since $\tr P_{\m D}=k\geq 1$ it follows from Theorem \ref{cor-asz-phi-min} that the rate of growth of the volume $\vol(M)=\frac{1}{k}\int_M \tr P_{\m D}$ is at least linear with respect to the geodesic balls centered to any point of $M$. Corollary \ref{corol-dist} is proved.

\section{Proof of Theorem \ref{cor-asz} and Theorem \ref{cor-asz2}} Before we prove Theorem \ref{cor-asz} we need some preliminaries. 
Let $W^m$ be an $m$-dimensional vector space and $T:W\to W$ a symmetric linear operator on $W$. Consider the Newton operators $P_j(T):W\to W$, $j=0,\ldots,m$, associated to $T$. It is easy to shows that each $P_j(T)$ is a symmetric linear operator with the same eigenvectors of $T$. Let $\{e_1,\ldots,e_m\}$ be an orthonormal basis of $W$ by eigenvectors of $T$ and $\{\la_1,\ldots,\la_m\}$ the corresponding eigenvalues. Let $W_j=\{e_j\}^{\perp}$, $j=1,\ldots,m$, be the orthogonal hyperplane to $e_j$ and consider $T_j=T|_{W_j}:W_j\to W_j$.  The two lemmas below were proved for the case that $T$ is the shape operator $A(p)$ associated to a hypersurface of a Riemannian manifold evaluated at some point $p$ (see Lemma 2.1 of \cite{bc} and Proposition 2.4 of \cite{asz}, respectively). The proof in the general case follows exactly the same steps. 

\begin{lemma} For each $1\leq j\leq m-1$, the following items hold:
\begin{enumerate}[(a)]
\item\label{eigPj} $P_j(T)e_k=S_j(T_k)e_k$, for each $1\leq k\leq m$;
\item\label{Pj} $\tr\,(P_j(T))=\sum_{k=1}^m S_j(T_k)=(m-j)S_j(T)$;
\item\label{TPj} $\tr\,(TP_j(T))=\sum_{k=1}^m \la_kS_j(T_k) = (j+1)S_{j+1}(T)$;
\item\label{T2Pj} $\tr\,(T^2P_j(T))=\sum_{k=1}^m \la_k^2S_j(T_k)=S_1(T)S_{j+1}(T)-(j+2)S_{j+2}(T)$.
\end{enumerate}
\end{lemma}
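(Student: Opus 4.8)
The plan is to deduce all four items from the defining recursion $P_0(T)=I$, $P_j(T)=S_j(T)\,I-T\,P_{j-1}(T)$ together with the elementary \emph{splitting identity}
\begin{equation*}
S_j(T)=S_j(T_k)+\la_k\,S_{j-1}(T_k),\qquad 1\le k\le m,\ \ j\ge 1,
\end{equation*}
which simply records whether or not the index $k$ occurs among the $j$ chosen indices in $S_j(T)=\sum_{i_1<\cdots<i_j}\la_{i_1}\cdots\la_{i_j}$, with the usual conventions $S_r(T)=0$ for $r<0$ or $r>m$ and $S_r(T_k)=0$ for $r>m-1$ (since $T_k$ acts on the $(m-1)$-dimensional space $W_k$).

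First I would record that, by iterating the recursion, $P_j(T)=\sum_{i=0}^{j}(-1)^iS_{j-i}(T)\,T^i$ is a polynomial in $T$, hence a symmetric operator sharing the eigenvectors $e_1,\dots,e_m$ of $T$; so only its eigenvalues are at issue. For item \ref{eigPj} I argue by induction on $j$: the case $j=0$ is $P_0(T)e_k=e_k=S_0(T_k)e_k$, and assuming $P_{j-1}(T)e_k=S_{j-1}(T_k)e_k$ the recursion gives
\begin{equation*}
P_j(T)e_k=S_j(T)e_k-\la_k\,P_{j-1}(T)e_k=\bigl(S_j(T)-\la_k\,S_{j-1}(T_k)\bigr)e_k=S_j(T_k)\,e_k,
\end{equation*}
the last equality being the splitting identity. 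Item \ref{eigPj} immediately yields the first equalities in \ref{Pj}, \ref{TPj} and \ref{T2Pj}: since $T^{\ell}P_j(T)e_k=\la_k^{\ell}S_j(T_k)e_k$, one has $\tr(P_j(T))=\sum_kS_j(T_k)$, $\tr(TP_j(T))=\sum_k\la_kS_j(T_k)$ and $\tr(T^2P_j(T))=\sum_k\la_k^2S_j(T_k)$.

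For the second equality in \ref{Pj} I would count monomials: $\la_{i_1}\cdots\la_{i_j}$ with $i_1<\cdots<i_j$ occurs in $S_j(T_k)$ exactly for the $m-j$ indices $k\notin\{i_1,\dots,i_j\}$, hence $\sum_kS_j(T_k)=(m-j)S_j(T)$. For \ref{TPj} I would instead sum the splitting identity in degree $j+1$, namely $S_{j+1}(T)=S_{j+1}(T_k)+\la_kS_j(T_k)$, over $k$; using \ref{Pj} this reads $mS_{j+1}(T)=(m-j-1)S_{j+1}(T)+\sum_k\la_kS_j(T_k)$, whence $\sum_k\la_kS_j(T_k)=(j+1)S_{j+1}(T)$. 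Finally, for \ref{T2Pj} I would bootstrap from \ref{TPj}: multiplying $\la_kS_j(T_k)=S_{j+1}(T)-S_{j+1}(T_k)$ by $\la_k$ and summing over $k$,
\begin{equation*}
\sum_k\la_k^2S_j(T_k)=S_{j+1}(T)\sum_k\la_k-\sum_k\la_kS_{j+1}(T_k)=S_1(T)S_{j+1}(T)-(j+2)S_{j+2}(T),
\end{equation*}
where the last step uses $\sum_k\la_k=S_1(T)$ and \ref{TPj} with $j$ replaced by $j+1$.

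There is no genuine obstacle here; the argument is the recursion, the splitting identity, and a little counting. The only point to watch is the bookkeeping at the edge cases (for instance $j=m-1$, or $S_{j+2}$ when $j+2>m$): one must check that the convention making $S_r$ vanish outside the admissible range keeps the splitting identity and the two counting identities valid verbatim, so that \ref{TPj} and \ref{T2Pj} remain correct even when the right-hand sides are forced to be zero. With this understood, everything goes through uniformly for $1\le j\le m-1$.
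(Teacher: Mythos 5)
Your proof is correct. The paper itself does not reprove this lemma: it cites Lemma 2.1 of Barbosa--Colares \cite{bc} and Proposition 2.4 of Alencar--Santos--Zhou \cite{asz} for the shape-operator case and notes that the general case ``follows exactly the same steps.'' Your argument is the standard one those references use --- induction via the recursion $P_j=S_jI-TP_{j-1}$ plus the splitting identity $S_j(T)=S_j(T_k)+\la_kS_{j-1}(T_k)$ to get the eigenvalue formula, then counting and summed splitting identities for the trace formulas --- so in substance it matches the cited proofs, with the added merit of being written out self-containedly and of explicitly flagging the boundary conventions ($S_r\equiv 0$ outside $0\le r\le\dim$) that make items \ref{TPj} and \ref{T2Pj} valid at $j=m-1$.

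Two small remarks worth recording. First, in deriving \ref{TPj} you invoke \ref{Pj} with $j$ replaced by $j+1$, and in \ref{T2Pj} you invoke \ref{TPj} with $j$ replaced by $j+1$; when $j=m-1$ these fall outside the range $1\le j\le m-1$ stated in the lemma, so one should note (as you implicitly do) that the identities $\sum_k S_m(T_k)=0\cdot S_m(T)$ and $\sum_k\la_kS_m(T_k)=(m+1)S_{m+1}(T)=0$ hold trivially because $T_k$ acts on an $(m-1)$-dimensional space, forcing $S_m(T_k)=0$. Second, the closed form $P_j(T)=\sum_{i=0}^j(-1)^iS_{j-i}(T)\,T^i$ that you mention in passing is not actually needed for the induction, but it is the cleanest way to see at a glance that $P_j(T)$ is a polynomial in $T$ and hence shares its eigenvectors, which is the observation that reduces everything to scalar identities.
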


\begin{lemma}\label{Pj-semidefinite} Assume that $S_{j+1}(T)=0$, for some $1\leq j\leq n-1$. Then $P_j(T)$ is semidefinite.
\end{lemma}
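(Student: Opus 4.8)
Assume that $S_{j+1}(T)=0$, for some $1\le j\le n-1$. Then $P_j(T)$ is semidefinite.

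The plan is to reduce everything to a statement about signs of the eigenvalues of $P_j(T)$. By item (\ref{eigPj}) of the previous lemma, $P_j(T)$ is a symmetric operator whose eigenvalues are precisely the $m$ real numbers $S_j(T_1),\dots,S_j(T_m)$. Hence $P_j(T)$ is semidefinite (positive or negative) if and only if these numbers never take both a strictly positive and a strictly negative value, which in turn holds if and only if $S_j(T_k)\,S_j(T_l)\ge 0$ for every pair $k,l$. So it suffices to prove this sign inequality for each fixed pair $k\neq l$.

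To that end I fix $k\neq l$ and set $T_{kl}=T|_{\{e_k,e_l\}^{\perp}}$, a symmetric operator on the $(m-2)$-dimensional space $\{e_k,e_l\}^{\perp}$ whose eigenvalues are $\{\la_i:i\neq k,l\}$. Separating the two variables $\la_k,\la_l$ inside the elementary symmetric polynomials yields
$$S_j(T_k)=S_j(T_{kl})+\la_l\,S_{j-1}(T_{kl}),\qquad S_j(T_l)=S_j(T_{kl})+\la_k\,S_{j-1}(T_{kl}),$$
$$0=S_{j+1}(T)=S_{j+1}(T_{kl})+(\la_k+\la_l)\,S_j(T_{kl})+\la_k\la_l\,S_{j-1}(T_{kl}).$$
Multiplying the first two identities and using the third to eliminate the cross term $(\la_k+\la_l)S_j(T_{kl})S_{j-1}(T_{kl})$, all the $\la_k\la_l$-terms cancel and one is left with
$$S_j(T_k)\,S_j(T_l)=S_j(T_{kl})^2-S_{j-1}(T_{kl})\,S_{j+1}(T_{kl}).$$

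Now I invoke Newton's inequality for the real numbers $\{\la_i:i\neq k,l\}$, namely $S_j^2\ge S_{j-1}S_{j+1}$, which holds for any real tuple and any $j\ge 1$ with the convention $S_r=0$ whenever $r$ exceeds the number of entries. (This convention is exactly what covers the borderline values $j=m-2$ and $j=m-1$, where the subtracted term vanishes because $S_{m-1}$ or $S_m$ of the $(m-2)$ numbers is zero.) Applying it to the displayed formula gives $S_j(T_k)\,S_j(T_l)=S_j(T_{kl})^2-S_{j-1}(T_{kl})\,S_{j+1}(T_{kl})\ge 0$, which is precisely what was needed, so $P_j(T)$ is semidefinite.

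There is no deep obstacle here; the only care points are the two-variable reduction of the elementary symmetric polynomials and the out-of-range indices when $j$ is close to $m$. For completeness one may also treat the extreme case $j=m-1$ directly: there $S_{j+1}(T)=\det T=0$ forces some eigenvalue $\la_{i_0}$ to vanish, hence $S_{m-1}(T_k)=\prod_{i\neq k}\la_i=0$ for every $k\neq i_0$, so $P_{m-1}(T)$ has at most one nonzero eigenvalue and is trivially semidefinite.
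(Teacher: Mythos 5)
Your proof is correct. Since the paper simply refers this lemma to Proposition~2.4 of Alencar--Santos--Zhou (for the shape operator case) and states that the general case ``follows exactly the same steps,'' there is no explicit argument in the paper to line up against; what you have written is a complete, self-contained substitute. The reduction to the sign condition $S_j(T_k)\,S_j(T_l)\ge 0$ via item~(a) of the preceding lemma is exactly right, and the two-variable split leading to the identity
\[
S_j(T_k)\,S_j(T_l)=S_j(T_{kl})^2-S_{j-1}(T_{kl})\,S_{j+1}(T_{kl})
\]
is verified correctly (the cross terms cancel precisely because $S_{j+1}(T)=0$). The appeal to Newton's inequality $S_j^2\ge S_{j-1}S_{j+1}$ for the $(m-2)$-tuple is also sound: the normalized Newton inequality $p_j^2\ge p_{j-1}p_{j+1}$ holds for any real (not just nonnegative) tuple, and since the normalizing ratio $\binom{n}{j}^2/\bigl(\binom{n}{j-1}\binom{n}{j+1}\bigr)>1$ for $1\le j\le n-1$, the unnormalized form you use follows whenever $S_{j-1}S_{j+1}\ge 0$, while it is trivial when $S_{j-1}S_{j+1}<0$. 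Your handling of the out-of-range indices $j=m-2$ and $j=m-1$ (where the subtracted term vanishes identically) is the only place care was needed, and you address it explicitly. Compared to the Garding-cone or interlacing style arguments that this result is usually traced back to (Hounie--Leite and the ASZ reference), your route through the product identity and Newton's inequality is more elementary and entirely algebraic; it buys transparency at essentially no cost, and would make a perfectly good replacement for the citation if the authors wanted a self-contained proof.
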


We also need of the following lemmas:

\begin{lemma}\label{rkT} Assume that $S_{j-1}(T)=S_{j}(T)=0$, for some $2\leq j\leq m$. Then the rank of $T$ satisfies $\rk(T)\leq j-2$. 
\end{lemma}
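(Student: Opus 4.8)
The plan is to exploit the fact that the elementary symmetric polynomials $S_j(T)$ are, up to the relevant combinatorial factor, the \emph{traces} of the Newton operators $P_j(T)$, so that information about the vanishing of $S_{j-1}$ and $S_j$ translates directly into information about the rank of $T$. First I would recall the standard formula relating elementary symmetric functions and power sums, or more conveniently the identities already recorded in the first lemma above, namely $\tr P_{j}(T)=(m-j)S_j(T)$ and $\tr(TP_{j}(T))=(j+1)S_{j+1}(T)$, together with the inductive definition $P_j=S_j I-TP_{j-1}$. From $\tr P_{j-1}(T)=(m-j+1)S_{j-1}(T)=0$ and the definition $P_j(T)=S_j(T)I-TP_{j-1}(T)$ one gets $\tr(TP_{j-1}(T))=mS_j(T)-\tr P_j(T)= mS_j(T)-(m-j)S_j(T)=jS_j(T)=0$ (or, directly, $\tr(TP_{j-1})=jS_j$), so both $\tr P_{j-1}(T)$ and $\tr(TP_{j-1}(T))$ vanish.

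Next I would diagonalize: pick an orthonormal eigenbasis $e_1,\dots,e_m$ of $T$ with eigenvalues $\la_1,\dots,\la_m$, and use item \ref{eigPj}, $P_{j-1}(T)e_k=S_{j-1}(T_k)e_k$. The point is to show that the hypothesis forces $S_{j-1}(T_k)$ to have a definite sign for all $k$, hence — combined with the vanishing of the trace — to vanish for all $k$. Concretely, Lemma \ref{Pj-semidefinite} applied with index $j-1$ (using $S_j(T)=0$) says $P_{j-1}(T)$ is semidefinite; thus all the numbers $S_{j-1}(T_k)$ are $\geq 0$ or all $\leq 0$. Since $\sum_k S_{j-1}(T_k)=\tr P_{j-1}(T)=(m-j+1)S_{j-1}(T)=0$, every $S_{j-1}(T_k)=0$, i.e. $P_{j-1}(T)\equiv 0$.

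Now I would run a counting argument on the eigenvalues. Suppose, for contradiction, that $T$ has at least $j-1$ nonzero eigenvalues; say $\la_{i_1},\dots,\la_{i_{j-1}}$ are nonzero. Consider an index $k\notin\{i_1,\dots,i_{j-1}\}$ (such a $k$ exists because $m\geq j$, so there are at least $m-(j-1)\geq 1$ indices left). Then $T_k=T|_{\{e_k\}^\perp}$ still contains the $j-1$ nonzero eigenvalues $\la_{i_1},\dots,\la_{i_{j-1}}$, so the monomial $\la_{i_1}\cdots\la_{i_{j-1}}$ appears in $S_{j-1}(T_k)$. To turn a single nonzero monomial into a nonzero value of $S_{j-1}(T_k)$ one cannot argue termwise in general, so instead I would argue more carefully: among all $e_k$ with $k$ outside the support, all $S_{j-1}(T_k)=0$, and one can then peel off eigenvalues one at a time using the recursion $S_{j-1}(T_k)$ versus $S_{j-1}$ of a further restriction, or alternatively use $S_{j-1}(T)=0$ and $S_j(T)=0$ with Newton's identities to show that at most $j-2$ of the $\la$'s can be nonzero. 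The cleanest route: the generating polynomial $\prod_{i=1}^m (1+\la_i t)=\sum_{l} S_l(T)t^l$ has $S_{j-1}(T)=S_j(T)=0$; writing $r=\rk(T)$ and restricting the product to the nonzero eigenvalues, $\prod_{\la_i\neq 0}(1+\la_i t)=\sum_{l=0}^{r} S_l(T) t^l$ is a degree-$r$ polynomial with all \emph{real} roots (namely $-1/\la_i$), none of them zero, hence by Newton's inequalities (or Rolle's theorem applied to derivatives) no two \emph{consecutive} coefficients $S_{j-1}(T),S_j(T)$ among $S_0,\dots,S_r$ can both vanish unless $j-1>r$, i.e. $r\leq j-2$.

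The main obstacle I anticipate is precisely this last step: ruling out two consecutive vanishing elementary symmetric functions of a real-rooted polynomial. The honest tool is the Newton inequality $S_{l-1}S_{l+1}\leq \tfrac{l(r-l)}{(l+1)(r-l+1)}S_l^2$ (for a polynomial with $r$ real roots, or more precisely $r$ nonzero real roots here), which if $S_j=0$ forces $S_{j-1}S_{j+1}\le 0$ and, iterated, propagates the zero; combined with $S_{j-1}=0$ one then needs a slightly more delicate bookkeeping, or one simply invokes Rolle: if $P(t)=\prod_{\la_i\neq 0}(1+\la_i t)$ has $r$ distinct-or-not real roots then so do all its derivatives, and $S_{j-1}=S_j=0$ would force a derivative of $P$ of appropriate order to be a monomial, which is impossible for $r\geq j-1$ since that derivative still has real roots but a monomial $ct^k$ with $k\ge1$ has a root only at $0$, contradicting that the roots are $-1/\la_i\neq 0$. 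I would present whichever of these is shortest; most likely the Rolle/real-rootedness argument, since it avoids the combinatorial constants in Newton's inequalities. Everything else is bookkeeping with the identities from the two lemmas already in the paper.
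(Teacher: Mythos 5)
Your generating-polynomial route is genuinely different from the paper's. The paper argues by induction on $\dim W$: from $S_{j-1}(T)=S_j(T)=0$ and Lemma~\ref{Pj-semidefinite} it concludes that $P_{j-1}(T)=0$ and $T^2P_{j-2}(T)=0$, and then, for any nonzero eigenvalue $\la_k$, reads off $S_{j-2}(T_k)=S_{j-1}(T_k)=0$ on the hyperplane $W_k$ and applies the inductive hypothesis to $T_k$. Your plan instead restricts to the $r=\rk(T)$ nonzero eigenvalues and reduces everything to the one-variable statement that the real-rooted polynomial $P(t)=\prod_{\la_i\neq 0}(1+\la_i t)=\sum_{l=0}^r S_l(T)\,t^l$, having nonzero constant and leading coefficients, cannot have two consecutive vanishing coefficients. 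That statement is true and implies the lemma, so your strategy works, stays entirely within elementary algebra, and is arguably shorter than the paper's induction through Newton operators. (The first half of your write-up, deriving $P_{j-1}(T)\equiv 0$, is not used once you pass to the polynomial and can be dropped.)

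There is, however, a genuine gap in how you justify the key one-variable fact, and it needs repair before the proof is complete. The ``monomial'' step is incorrect: $S_{j-1}=S_j=0$ gives $t^2\mid P^{(j-1)}$, but $P^{(j-1)}$ is a monomial only in the special case $r=j+1$; more to the point, the roots of a derivative of $P$ need not be among $-1/\la_i$ and can perfectly well be $0$ (already $(1+t)(1-t)$ has derivative $-2t$), so ``contradicting that the roots are $-1/\la_i\neq 0$'' does not apply. Newton's inequality alone does not close the gap either: when $p_{j-1}=p_j=0$ both sides of $p_j^2\ge p_{j-1}p_{j+1}$ vanish and nothing propagates. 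The correct mechanism is that interior critical points of a real-rooted polynomial are simple: if $Q$ is real-rooted, $Q(0)\neq 0$ and $Q'(0)=0$, then $0$ lies strictly between two consecutive distinct roots of $Q$, and a count with multiplicities shows each such gap contains exactly one root of $Q'$, simple, so $Q''(0)\neq 0$. Applying this to $Q=P^{(j-2)}$ for the minimal $j\ge 2$ with $S_{j-1}=S_j=0$ (so $S_{j-2}\neq 0$, using $S_0=1$) yields the contradiction when $j\le r$; the cases $j=r+1$ ($S_{j-1}=S_r\neq 0$) and $j\ge r+2$ ($r\le j-2$ already) are immediate. With this fix in place, your argument is a valid alternative to the paper's.
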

\begin{proof} 
If $T=0$ then there is nothing to prove since $\rk(T)=0\leq m-2$. Thus we can assume that $T\neq 0$. We will prove Lemma \ref{rkT} by induction on $m=\dim W$. 
First we assume that $m=2$. Since
\begin{eqnarray*}
\|T\|^2:=\la_1^2+\la_2^2= (\la_1+\la_2)^2-2\la_1\la_2=S_1(T)^2-2S_2(T)=0
\end{eqnarray*}
it follows that $T=0$. 

Now we assume that Lemma \ref{rkT} is true for any symmetric operator $Q:V^k\to V^k$ defined on a $k$-dimensional vector space $V$, with $2\leq k\leq m-1$. Since $S_{j-1}(T)=S_j(T)=0$, for some $2\leq j\leq m$, it follows from Lemma \ref{Pj-semidefinite} that the operators $P_{j-2}(T)$ and $P_{j-1}(T)$ are semidefinite. Thus using that $$\tr(P_{j-1}(T))=(m-j+1) S_{j-1}(T)=0$$ it follows that $P_{j-1}(T)=0$. Furthermore, the operator $T^2 P_{j-2}(T)$ is also semidefinite with trace satisfying $$\tr(T^2P_{j-2}(T))= S_1(T)S_{j-1}(T) - jS_j(T)=0,$$ which implies that $T^2P_{j-2}=0$. Since $(T^2P_{j-2})e_k=\la_k^2S_{j-2}(T_k)e_k=0$ we obtain that $\la_k=0$ or $S_{j-2}(T_k)=0$. Thus, using that $S_{j-1}(T_k)=\lan P_{j-1}(T)e_k,e_k\ran=0$ and $\dim (W_k)=m-1$, we obtain by the induction assumption that $\la_k=0$ or $\rk(T_k)\leq j-3$.  Since $T\neq 0$ there exists some eigenvalue $\la_k\neq 0$. Thus we obtain that $\rk(T_k)\leq j-3$ which implies that $\rk(T)\leq j-2$.   
\end{proof}

\begin{lemma}\label{dvPj=0} Let $B:TM\to TM$ be a symmetric operator of class $C^1$ that satisfies the Codazzi equation. Then it holds that $\dv (P_j(B))=0$. 
\end{lemma}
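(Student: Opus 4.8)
The plan is to prove the identity $\dv(P_j(B)) = 0$ by induction on $j$, using the recursive definition $P_j = S_j I - B P_{j-1}$ together with the Codazzi hypothesis on $B$. The base case $j = 0$ is immediate since $P_0 = I$ is parallel, so $(\na_Y P_0)X = 0$ for all $X, Y$, hence $\dv P_0 = 0$. For the inductive step, I would assume $\dv(P_{j-1}(B)) = 0$ and compute $\na_Y(P_j X)$ at a point using a local orthonormal frame $\{e_1,\dots,e_m\}$ that is geodesic at that point; applying the product rule to $P_j = S_j I - B P_{j-1}$ gives
\[
(\na_Y P_j)X = Y(S_j)\,X - (\na_Y B)(P_{j-1}X) - B\big((\na_Y P_{j-1})X\big).
\]
Taking the trace over $Y$ (i.e. summing $\lan (\na_{e_i} P_j)e_i, \cdot\ran$ against a test vector, or equivalently evaluating $\lan \dv P_j, X\ran$ with $X$ extended suitably) splits $\dv P_j$ into three terms: a gradient-type term coming from $\grad S_j$, a term $\tr\{Y \mapsto (\na_Y B)(P_{j-1}X)\}$, and the term $B(\dv P_{j-1})$, which vanishes by the inductive hypothesis.

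The heart of the argument is therefore to show that $\grad S_j$ exactly cancels the middle term $\tr\{Y \mapsto (\na_Y B)(P_{j-1}X)\}$. Here the Codazzi equation $(\na_X B)Y = (\na_Y B)X$ is essential: it lets one rewrite $(\na_Y B)(P_{j-1}X)$ symmetrically and, combined with the classical fact that the derivative of the elementary symmetric function satisfies $\na S_j = \tr\{P_{j-1}(B)\,\na B(\cdot)\}$ in an appropriate sense (more precisely, $\lan \grad S_j, X\ran = \tr\{Y \mapsto P_{j-1}(\na_X B)Y\}$, using $\tfrac{\p S_j}{\p B} = P_{j-1}$), produces the needed cancellation. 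Concretely, I would verify in a frame diagonalizing $B$ at the point that $e_k(S_j) = \sum_{i} S_{j-1}(B_{ik})\,\lan(\na_{e_k}B)e_i, e_i\ran$-type expressions match, term by term, what comes out of $\tr\{Y\mapsto (\na_Y B)(P_{j-1}X)\}$ after a Codazzi swap.

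The main obstacle I anticipate is bookkeeping: correctly handling the trace (it is a trace over the $Y$-slot of a vector-valued bilinear expression, not a scalar trace) and making the Codazzi symmetrization interact cleanly with the eigenvector frame when $B$ has repeated eigenvalues — so one should argue at a generic point where computations are clean and invoke continuity, or else run the frame-free version throughout. A cleaner alternative, which I would actually prefer to write up, is to prove directly that for any $C^1$ Codazzi operator $B$ one has $\dv(P_j(B)) = 0$ by appealing to the standard divergence-free property of Newton tensors (see e.g. Reilly's formula / the Newton transformation identities): the combinatorial identity $\dv P_j = (\text{terms in } \na B)$ vanishes precisely under Codazzi, and this is already classical for shape operators of hypersurfaces; since the proof only uses the Codazzi equation and not the ambient geometry, it transfers verbatim. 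Either route reduces to the same algebraic cancellation, so I would present the induction above and carry out the one-point frame computation to establish the cancellation of $\grad S_j$ against the $\na B$ term.
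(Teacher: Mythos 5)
Your plan is essentially the paper's proof: both expand $\dv P_j$ using the recursion $P_j=S_jI-BP_{j-1}$ in a geodesic frame, identify the resulting expression as $\na S_j - B(\dv P_{j-1}) - \sum_i(\na_{E_i}B)P_{j-1}E_i$, use the symmetry of $\na_Y B$ plus a Codazzi swap to turn the last sum into $\tr(P_{j-1}(\na_X B))$, and invoke the Reilly identity $\tr(P_{j-1}(\na_X B))=\lan\na S_j,X\ran$ (your $\partial S_j/\partial B = P_{j-1}$) to get the cancellation, leaving $\dv P_j=-B(\dv P_{j-1})$, which collapses to zero by recurrence from $P_0=I$. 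The paper cites Reilly for the key identity where you propose a frame computation, but the mathematical content and structure are the same.
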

\begin{proof} We denote by $P_j=P_j(B)$, with $j=1,\ldots,m$. Take $p\in M$ and let $\{E_1,\ldots,E_m\}$ be an orthonormal frame defined on an neighborhood $V$ of $p$ in $M$, geodesic at $p$. We have that
\begin{eqnarray}\label{div Pj}
\dv P_j &=& \sum_{i=1}^m (\na_{E_i}P_j)E_i = \sum_{i=1}^m (\na_{E_i}S_j I - BP_{j-1})E_i \nonumber\\ &=& \sum_{i=1}^m \big(E_i(S_j)E_i - \na_{E_i}(BP_{j-1})E_i\big)\nonumber\\ &=& \na(S_j) - \sum_{i=1}^m \big((\na_{E_i}B)P_{j-1}(E_i) + B(\na_{E_i}P_{j-1})E_i\big)\nonumber\\ &=& \na(S_j)-B(\dv P_{j-1}) - \sum_{i=1}^m (\na_{E_i}B)P_{j-1}(E_i).
\end{eqnarray}
Let $X$ be a $C^1$ vector field on $M$. Since $(\na_X B)$ is a symmetric operator and $(\na_{E_i}B)X=(\na_X B)E_i$, for all $i=1,\ldots,m$, we obtain that
\begin{eqnarray}\label{trPjXB}
\sum_{i=1}^m \lan (\na_{E_i}B)P_{j-1}(E_i),X\ran &=& \sum_{i=1}^m \lan P_{j-1}(E_i),(\na_XB)E_i\ran \\&=& \tr\big(P_{j-1}(\na_XB)\big).\nonumber
\end{eqnarray}
It was proved by Reilly \cite{re} (see Lemme A of \cite{re}) that $\tr\big(P_{j-1}(\na_XB)\big)=\lan\na(S_j),X\ran$.
Thus, using (\ref{trPjXB}), we obtain that 
\begin{equation}\label{naSj}
\sum_{i=1}^m (\na_{E_i}B)P_{j-1}(E_i) = \na (S_j).
\end{equation}
Using (\ref{div Pj}) and (\ref{naSj}), we obtain that
\begin{equation*}
(\dv P_j)_p= (\na S_j)(p) - B(\dv P_{j-1})_p - (\na S_j)(p)= -B(\dv P_{j-1})_p.
\end{equation*}
Since $P_0=I$ we obtain by recurrence that $\dv P_j=(-1)^j B^j(\dv I)=0$. This concludes the proof of Lemma \ref{dvPj=0}.
\end{proof}

Now, we are able to prove Theorem \ref{cor-asz} and Theorem \ref{cor-asz2}. 
\subsection{Proof of Theorem \ref{cor-asz}.}
Since $S_{j+1}(B)=0$ it follows from Lemma \ref{Pj-semidefinite} that the operator $P_j(B(p)):T_pM \to T_pM$ is semidefinite at each point $p\in M$. Since $S_j$ does not change of sign we obtain that $\Phi=\ep P_j$ is positive-semidefinite, for some constant $\ep\in \{-1,1\}$. Since $B$ satisfies the Codazzi equation it follows from Lemma \ref{dvPj=0} that $\dv\Phi=\ep\,\dv P_j=0$. Since $|H_\Phi+\dv\Phi|=|H_{P_j}|\leq \frac{1}{r+\ep}$, where $r$ is the distance function of $\bar M$ from $q_0$ and $\tr\Phi(q_0)=|\tr P_j(q_0)|=(m-j)|S_j(B(q_0))|>0$ we can apply Theorem \ref{thm_asz} to conclude that the rate of growth of $\int_M \tr \Phi=(m-j)\int_M |S_j(B)|$ is at least logarithmic with respect to the geodesic balls of $M$ centered at $q_0$. Theorem \ref{cor-asz} is proved.

\subsection{Proof of Theorem \ref{cor-asz2}.} 
Since $S_{j+1}=S_{j+1}(A)=0$ it follows from Lemma \ref{Pj-semidefinite} that the operator $P_j(A(p)):T_pM \to T_pM$ is semidefinite at each point $p\in M$. Since $S_j$ does not change of sign we obtain that $\Phi=\ep P_j$ is positive-semidefinite, for some constant $\ep\in \{-1,1\}$. Since the shape operator $A$ satisfies the Codazzi equation it follows from Lemma \ref{dvPj=0} that $\dv\Phi=\ep\,\dv P_j=0$. Since $|H_\Phi+\dv\Phi|=|H_{P_j}|=(j+1)S_{j+1}=0$ and $\tr\Phi(q_0)=|\tr P_j(q_0)|=(m-j)|S_j(A(q_0))|>0$ we can apply Theorem \ref{cor-asz-phi-min} to conclude that the rate of growth of the integral $\int_M \tr \Phi=(m-j)\int_M |S_j(A)|$ is at least linear with respect to the geodesic balls of $M$ centered at $q_0$. Theorem \ref{cor-asz2} is proved.


\section{Proof of Theorem \ref{thm_majorar}, Corollary \ref{majorada} and Theorem \ref{bound_geom}}\label{proof-thm-majorar} 
The main tool of this section is the following result:
\begin{theorem}\label{phi r} Let $f:M\to \bar M$ be an isometric immersion of a complete noncompact manifold $M$ in a manifold $\bar M$. Assume that the radial curvature of $\bar M$ with base point in some $q_0\in f(M)$ satisfies $\bar K_{\rad}\leq \m K(r)$, where $r=d_{\bar M}(\cdot\,,q_0)$ and $\m K:\real\to \real$ is an even continuous function. Let $\Phi:TM\to TM$ be a positive-semidefinite symmetric operator such that $\tr\Phi(q_0)>0$. Assume further that 
\begin{equation}\label{hip-theo-al}
|H_\Phi + \dv \Phi|\leq \al(r)\, \tr\Phi \ \ \mbox{ and } \ \ m|\Phi \na r|\leq \tr\Phi,
\end{equation}
where $\al:[0,\infty)\to [0,\infty)$ is a nonnegative $C^1$-function. Then 
\begin{equation*}
\int_{B_\mu(q_0)}\tr\Phi\geq  m\,\tr\Phi(q_0) \int_{0}^\mu h(\tau)^{m-1}e^{-m\int_0^\tau \al(s)ds}d\tau.
\end{equation*} 
for all $0<\mu<\min\{\mu_{\m K,\al}, \bar R_{q_0}\}$, where $h:(0,r_0)\to (0,\infty)$ is the maximal positive solution of (\ref{cauchy-K}).
\end{theorem}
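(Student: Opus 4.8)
The plan is to mimic the proof of Theorem \ref{theo-al}, but instead of passing through the Cheng-Yau type operator applied to $h(r)\bar\na r$, we work directly with the full "radial test field" $X = h(r)\bar\na r$ and exploit the \emph{second} hypothesis $m|\Phi\na r|\le \tr\Phi$ to control the boundary term and to get the improved power $h(\tau)^{m-1}$ in the exponent. More precisely, set $B_\mu = B_\mu(q_0)$, let $\rho = d_M(\cdot\,,q_0)$ and recall $r\le\rho$, so that $B_\mu\subset \m B_\mu(q_0)\cap V$ for $\mu<\min\{\mu_{\m K,\al},\bar R_{q_0}\}$. By Proposition \ref{hessiana-K} (item \ref{positive}, since $\Phi$ is positive-semidefinite) and Proposition \ref{phi vector}\ref{divphi} together with the divergence theorem, one has
\begin{equation*}
\int_{\p B_\mu} h(r)\lan \Phi\na r,\nu\ran \;\ge\; \int_{B_\mu}\bigl(h'(r)\tr\Phi - h(r)|H_\Phi+\dv\Phi|\bigr)\;\ge\;\int_{B_\mu}\bigl(h'(r)-\al(r)h(r)\bigr)\tr\Phi,
\end{equation*}
exactly as in Corollary \ref{theodiv}, and, using $|\na r|\le 1$ and Cauchy--Schwarz, $\lan\Phi\na r,\nu\ran\le |\Phi\na r|\le \tfrac1m\tr\Phi$ on $\p B_\mu$ by the second hypothesis in (\ref{hip-theo-al}).

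The key new point is to run the Grönwall/ODE argument against the weight $h(r)^{m-1}$ rather than $h(r)$. First I would introduce, for $0<\mu<\bar R_0:=\min\{\mu_{\m K,\al},\bar R_{q_0}\}$,
\begin{equation*}
F(\mu)=\int_{\p B_\mu} h(r)^{m-1}\tr\Phi,\qquad G(\mu)=\int_0^\mu\int_{\p B_\tau}\Bigl(\tfrac{h'(r)}{h(r)}-\al(r)\Bigr)\,m\,h(r)^{m-1}\tr\Phi,
\end{equation*}
where I have multiplied the test field $X=h(r)\bar\na r$ by the extra factor $h(r)^{m-2}$, or equivalently applied Proposition \ref{hessiana-K} to the operator $h(r)^{m-2}\Phi$ after noting $\m D_\Phi(h(r)^{m-1}\bar\na r)= (m-1)h(r)^{m-2}h'(r)\lan\na r,\Phi\na r\ran + h(r)^{m-1}\m D_\Phi\bar\na r$ and combining this with the Hessian comparison (\ref{comparison}). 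Carrying out the same bookkeeping as in Theorem \ref{theo-al}, the hypothesis $m|\Phi\na r|\le\tr\Phi$ on the boundary and the monotonicity of $t\mapsto h'(t)/h(t)-\al(t)$ on $(0,\mu_{\m K,\al})$ give $F(\mu)\ge G(\mu)$, $G(\mu)>0$ (using $\tr\Phi(q_0)>0$), and the differential inequality
\begin{equation*}
\frac{G'(\mu)}{G(\mu)}\ge m\Bigl(\frac{h'(\mu)}{h(\mu)}-\al(\mu)\Bigr)=\frac{d}{d\mu}\ln\bigl(h(\mu)^m e^{-m\int_0^\mu\al}\bigr),
\end{equation*}
valid a.e. Integrating from $\mu_0$ to $\mu$ and then letting $\mu_0\to 0$ — here the normalization $h(0)=0$, $h'(0)=1$ lets us identify the constant as $m\,\tr\Phi(q_0)$ by a Taylor expansion of $G$ near $0$ (since $G(\mu)\sim \tr\Phi(q_0)\cdot \vol(\text{small ball})\cdot(\cdots)$, the right normalization appears, replacing the $M$-dependent constant $\La$ of Theorem \ref{theo-al}) — yields $G(\mu)\ge m\,\tr\Phi(q_0)\,h(\mu)^m e^{-m\int_0^\mu\al(s)ds}$.

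Finally, with $f(\mu)=\int_{B_\mu}\tr\Phi$ and $h(r)\le h(\mu)$ on $\p B_\mu$, the coarea formula gives
\begin{equation*}
f'(\mu)=\int_{\p B_\mu}\tr\Phi\;\ge\;\frac{1}{h(\mu)^{m-1}}\int_{\p B_\mu}h(r)^{m-1}\tr\Phi=\frac{F(\mu)}{h(\mu)^{m-1}}\ge\frac{G(\mu)}{h(\mu)^{m-1}}\ge m\,\tr\Phi(q_0)\,h(\mu)\,e^{-m\int_0^\mu\al},
\end{equation*}
and integrating in $\mu$ from $0$ (where $f(0)=0$) produces the claimed bound $\int_{B_\mu(q_0)}\tr\Phi\ge m\,\tr\Phi(q_0)\int_0^\mu h(\tau)^{m-1}e^{-m\int_0^\tau\al(s)ds}\,d\tau$, after one more integration by parts trading $h(\mu)$ against $h(\tau)^{m-1}$; alternatively one integrates the inequality $f'(\mu)\ge G(\mu)/h(\mu)^{m-1}$ directly and uses $G'(\tau)=m(h'/h-\al)F(\tau)$ to recognize the primitive. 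I expect the main obstacle to be the careful treatment of the limit $\mu_0\to 0$: one must check that $G(\mu_0)/\bigl(h(\mu_0)^m e^{-m\int_0^{\mu_0}\al}\bigr)\to m\,\tr\Phi(q_0)$, which requires the local asymptotics of the geodesic spheres $\p B_{\mu_0}(q_0)$ in $M$ (volume $\sim \om_{m-1}\mu_0^{m-1}$) against $h(\mu_0)\sim\mu_0$, together with continuity of $\tr\Phi$ at $q_0$; this is where the $M$-independence of the constant, and hence the $q_0$-independence of the $\liminf$ in Theorem \ref{thm_majorar}, ultimately comes from.
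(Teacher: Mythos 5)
Your starting point is right — the same inequality from Corollary \ref{theodiv} together with the improved boundary estimate $\lan\Phi\na r,\nu\ran\le |\Phi\na r|\le\frac1m\tr\Phi$ — but the route you take from there diverges from the paper's and, as written, does not close. The paper keeps the test field $X=h(r)\bar\na r$ and therefore keeps
\[
F(\mu)=\int_{\p B_\mu}h(r)\tr\Phi,\qquad G(\mu)=\int_0^\mu\int_{\p B_\tau}\Bigl(\tfrac{h'(r)}{h(r)}-\al(r)\Bigr)h(r)\tr\Phi ,
\]
exactly as in Theorem \ref{theo-al}. The only change is that the new hypothesis upgrades $F\ge G$ to $F\ge m\,G$; fed into $G'\ge(\frac{h'}{h}-\al)F$, this gives $G'/G\ge m(\frac{h'}{h}-\al)$, hence $G(\mu)\ge C\,h(\mu)^m e^{-m\int_0^\mu\al}$, and since $G(\mu_0)\sim\tr\Phi(q_0)\,\vol(B_{\mu_0})\sim\tr\Phi(q_0)\,\omega_m\,\mu_0^m\sim\tr\Phi(q_0)\,\omega_m\,h(\mu_0)^m$, the constant $C=\lim_{\mu_0\to0}G(\mu_0)/h(\mu_0)^m$ is finite and at least $\tr\Phi(q_0)$. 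Then $f'(\mu)\ge F(\mu)/h(\mu)\ge m\,G(\mu)/h(\mu)\ge m\,\tr\Phi(q_0)\,h(\mu)^{m-1}e^{-m\int_0^\mu\al}$, and integration gives the claimed estimate. The exponent $m$ on $h$ is produced entirely by the Gr\"onwall argument, not by changing the test field.

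Your modification — replacing $X=h(r)\bar\na r$ by $X=h(r)^{m-1}\bar\na r$ and putting the weight $h(r)^{m-1}$ into $F$ and $G$ — breaks the proof at two concrete points. First, the normalization: with your
\[
G(\mu)=m\int_0^\mu\int_{\p B_\tau}\Bigl(\tfrac{h'(r)}{h(r)}-\al(r)\Bigr)h(r)^{m-1}\tr\Phi ,
\]
the integrand behaves like $r^{m-2}\tr\Phi(q_0)$ near $q_0$, so $G(\mu_0)\sim\mu_0^{2m-2}$, not $\mu_0^m$. Hence $G(\mu_0)/h(\mu_0)^m\sim\mu_0^{m-2}\to 0$ for every $m\ge 3$: the constant you claim equals $m\,\tr\Phi(q_0)$ is in fact $0$, and the Gr\"onwall bound degenerates to the trivial inequality. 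Second, even granting a nonzero constant, the last step $f'(\mu)\ge F(\mu)/h(\mu)^{m-1}\ge G(\mu)/h(\mu)^{m-1}\ge C\,h(\mu)\,e^{-m\int_0^\mu\al}$ integrates to $\int_0^\mu h(\tau)\,e^{-m\int_0^\tau\al}\,d\tau$, with $h$ to the first power, not $h^{m-1}$ as required; the gesture toward ``one more integration by parts trading $h(\mu)$ against $h(\tau)^{m-1}$'' does not repair this, and neither does recognizing $G'$ as a primitive. The fix is simply to revert to the paper's $F$ and $G$ (weight $h(r)$, not $h(r)^{m-1}$) and let the extra factor $m$ from $F\ge mG$ do all the work inside the ODE.
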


\begin{proof}   
By following exactly the same steps as in the proof of Theorem \ref{theo-al} we obtain that 
\begin{equation}\label{decreasing}
\int_{\p B_\mu} h(r)\lan\Phi\na r,\nu\ran\geq \int_0^\mu \int_{\p B_{\tau}} \Big(\frac{h'(r)}{h(r)}-\al(r)\Big)  h(r)\tr\Phi,
\end{equation}
for almost everywhere $0<\mu<\bar R_0=\min\{\mu_{\m K,\al},\bar R(q_0)\}$, where $B_\mu=B_\mu(q_0)$ and $\nu$ is the exterior conormal along $\p D$.

Since $|\nu|=1$ and $|\Phi \na r|\leq \frac{\tr\Phi}{m}$, using Cauchy-Schwartz inequality, we obtain that $\lan\Phi\na r,\nu\ran\leq \frac{\tr\Phi}{m}$. 
Using (\ref{decreasing}) we obtain
\begin{equation}\label{inequality}
\int_{\p B_\mu} h(r) \tr\Phi \geq m\int_0^\mu \int_{\p B_{\tau}} \Big(\frac{h'(r)}{h(r)}-\al(r)\Big)h(r)\tr\Phi,
\end{equation}
for a.e.  $0<\mu<\bar R_0$.

Consider the following functions 
\begin{equation*}
\begin{array}{l}
F: \mu\in (0,\bar R_0)\mapsto F(\mu)=\int_{\p B_\mu} h(r)\tr\Phi\\

G:\mu \in (0,\bar R_0)\mapsto G(\mu)=\int_0^\mu \int_{\p B_{\tau}} \Big(\frac{h'(r)}{h(r)}-\al(r)\Big)h(r)\tr\Phi.
\end{array}
\end{equation*}
It follows by (\ref{inequality}) that 
\begin{equation}\label{F-geq-mG}
F(\mu)\geq m\, G(\mu),
\end{equation}
for a.e. $\mu\in (0,\bar R_0)$. 

Note that $G(\mu)>0$, for all $0<\mu<\bar R_0$, since $G\geq 0$, $G$ is nondecreasing and $G(\mu)>0$, for $\mu>0$ sufficiently small (recall that $\tr\Phi(q_0)>0$). Thus, by (\ref{F-geq-mG}), we obtain
\begin{equation*}
G'(\mu)= \Big(\frac{h'(\mu)}{h(\mu)}-\al(\mu)\Big)F(\mu)\geq m\Big(\frac{h'(\mu)}{h(\mu)}-\al(\mu)\Big)G(\mu),
\end{equation*}
for a.e.  $0<\mu<\bar R_0$.
This implies that
\begin{eqnarray}\label{der-ln}
\frac{d}{d\mu} \ln G(\mu)&=& \frac{G'(\mu)}{G(\mu)}\geq m\Big(\frac{h'(\mu)}{h(\mu)}-\al(\mu)\Big)=m\Big(\Big(\frac{d}{d\mu}\ln h(\mu)\Big) -\al(\mu)\Big)\nonumber\\&=& \Big(\frac{d}{d\mu}\ln h(\mu)^m \Big) -m\al(\mu),
\end{eqnarray}
for a.e. $\mu\in (0,\bar R_0)$. Integrating (\ref{der-ln}) over $(\mu_0,\mu)$, with $0<\mu_0<\mu$, we obtain that 
\begin{equation*}
\ln \Big(\frac{G(\mu)}{G(\mu_0)}\Big)\geq \ln \Big(\frac{h(\mu)^m}{h(\mu_0)^m}\Big) - m\int_{\mu_0}^\mu \al(s)ds.
\end{equation*}
Thus, we obtain
\begin{equation}\label{G}
G(\mu)\geq \frac{G(\mu_0)}{h(\mu_0)^m} h(\mu)^m e^{-m\int_{\mu_0}^\mu \al(s)ds} ,
\end{equation}
for all $0<\mu_0<\mu<\bar R_0$.  

Using that $r\leq \mu$ in $B_\mu$ and the function $\mu \in (0,\bar R_0)\mapsto \frac{h'(\mu)}{h(\mu)}-\al(\mu)$ is non-decreasing we have from the coarea formula that $$G(\mu)=\int_{B_\mu}\Big(\frac{h'(r)}{h(r)}-\al(r)\Big)h(r)\tr\Phi \geq \Big(\frac{h'(\mu)}{h(\mu)}-\al(\mu)\Big)\int_{B_\mu} \tr\Phi,$$
for all $0<\mu <\bar R_0$. 
Since $\lim_{t\to 0}\frac{h(t)}{t}=h'(0)=1$ and $h(0)=0$  we obtain  
\begin{eqnarray}\label{G-traco}
\lim_{\mu_0\to 0} \frac{G(\mu_0)}{h(\mu_0)^m}&\geq &\lim_{\mu_0\to 0} \Big(\frac{\mu_0}{h(\mu_0)}\Big)^m \lim_{\mu_0\to 0}\Big(\frac{1}{\mu_0^m}\int_{B_{\mu_0}}\big(h'(r)-\al(r)h(r)\big)\tr\Phi\Big)\nonumber\\&=&\big(h'(0)-\al(0)h(0)\big)\tr\Phi(q_0) = \tr\Phi(q_0).
\end{eqnarray}
Thus, using (\ref{G}), (\ref{G-traco}) and taking $\mu_0\to 0$, we obtain that
\begin{equation}\label{G-des}
G(\mu)\geq \tr\Phi(q_0)h(\mu)^m e^{-m\int_0^\mu \al(s)ds},
\end{equation}
for all $0<\mu<\bar R_0$.

Now we consider the function 
\begin{equation*}
\mu\in [0, \bar R_0) \mapsto f(\mu)=\int_{B_\mu}\tr\Phi.
\end{equation*}
Since $h(r)\leq h(\mu)$ in $\p B_\mu$ and $F(\mu)\geq m\,G(\mu)$, using the coarea formula and (\ref{G-des}), we obtain
\begin{eqnarray*}
f'(\mu)&=&\int_{\p B_\mu} \tr\Phi \geq \frac{1}{h(\mu)}\int_{\p B_\mu} h(r)\tr\Phi = \frac{F(\mu)}{h(\mu)}\geq m \frac{G(\mu)}{h(\mu)}\\&\geq&  m\,\tr\Phi(q_0) h(\mu)^{m-1}   e^{-m\int_{0}^\mu \al(s)ds}.
\end{eqnarray*}
Since $f(0)=0$, by integration $f'(\mu)$ on $(0,\mu)$, we have that
\begin{equation*}
\int_{B_\mu}\tr\Phi=f(\mu)\geq  m\, \tr\Phi(q_0) \int_{0}^\mu h(\tau)^{m-1}e^{-m\int_{0}^\tau \al(s)ds}d\mu.
\end{equation*}
This concludes the proof of Theorem {\ref{phi r}}.
\end{proof}

Now we are able to prove Theorem \ref{thm_majorar} and Theorem \ref{bound_geom}.

\subsection{Proof of Theorem \ref{thm_majorar}}
First we observe that the injectivity radius of $\bar M$ at the point $q_0$ satisfies $\bar R_{q_0}=+\infty$ since the radial curvature of $\bar M$ with base point $q_0$ is nonpositive. We consider constant functions 
\begin{equation*}
\m K(t)=-c^2 \ \mbox{ and } \ \al(t)=\frac{(m-1)c}{m},
\end{equation*}
 for all $t$. The maximal positive solution of (\ref{cauchy-K}) is given by $h(t)=\frac{1}{c}\sinh(c\,t)$, with $t>0$. Since $\cosh(t)\geq \sinh(t)$, for all $t\geq 0$, we obtain 
\begin{equation*}
\begin{array}{l}
h'(t)=\cosh(c\,t)> \frac{(m-1)c}{m}h(t) \ \mbox{ and }\\
0=\al'(t)\geq -c^2\big((\coth(c\,t))^2-1\big)=-\frac{h'(t)^2}{h(t)^2}-\m K(t),
\end{array}
\end{equation*}
for all $t>0$, which implies that $\mu_{\m K,\al}=\infty$. Thus, using Theorem \ref{phi r}, we obtain that 
\begin{eqnarray}\label{rate}
\int_{B_\mu}\tr\Phi &\geq& \frac{m}{c^{m-1}}\, \tr\Phi(q_0) \int_{0}^\mu \sinh(c\,\tau)^{m-1}e^{-(m-1)c\,\tau} d\tau\nonumber\\&=& \frac{m}{(2\,c)^{m-1}}\, \tr\Phi(q_0) \int_{0}^\mu (1 - e^{-2c\tau})^{m-1}d\tau\nonumber\\&\geq&
\frac{m}{(2\,c)^{m-1}}\, \tr\Phi(q_0) \int_{0}^\mu (1 - (m-1) e^{-2c\tau})d\tau.
\end{eqnarray}
The last inequality follows from the Bernoulli's inequality since $e^{-2c\tau}<1$. This implies that 
\begin{equation*}
\liminf_{\mu\to\infty} \frac{\mu^{-1}}{\tr\Phi(q_0)}\int_{B_\mu}\tr\Phi \geq  \frac{m}{(2\,c)^{m-1}}.
\end{equation*}
Theorem \ref{thm_majorar} is proved.

\subsection{Proof of Theorem \ref{bound_geom}}
Since $\bar M$ has bounded geometry, there exist constants $c>0$ and $\bar R_0>0$ such that the sectional curvature of $\bar M$ satisfies $K_{\bar M}\leq c^2$ and the injectivity radius satisfies $\bar R_q\geq \bar R_0$, for all $q\in \bar M$. We consider the constant functions $\m K(t)=c^2$ and $\al(t)=\ka\geq 0$, for all $t$. The function $h(t)=\frac{1}{c}\sin(c\,t)$, with $t\in (0,\frac{\pi}{c})$, is the maximal positive solution of (\ref{cauchy-K}). We take $0<t_0\leq \frac{\pi}{2\,c}$ the maximal positive number satisfying:
\begin{equation*}
h'(t)=\cos(c\,t)>\frac{\ka}{c} \sin(c\,t)=\al(t)\, h(t), 
\end{equation*}
for all $0<t<t_0$.
Since $0=\al'(t)\geq -\frac{h'(t)^2}{h(t)^2}-\m K(t)$, for all $t\in (0,\frac{\pi}{2c}]$, we obtain that $\mu_{\m K,\al}=t_0$. Let $E$ be an end of $M$ and $\la:E\to [0,\infty)$ a nonnegative $C^1$ function. The operator $\Phi(v)=\la(q)v$, for all $q\in E$ and $v\in T_q M$, satisfies $|\Phi (\na r)|=\la|\na r|\leq \la=\frac{\tr\Phi}{m}$, since $|\na r|\leq 1$ and $\tr\Phi=m\la$.  Thus Theorem \ref{phi r} applies. Thus, for all $0<\mu< \min\{\mu_{\m K,\al},\bar R_0\}$ and $q_0\in E$ such that $B_{\mu}(q_0)\subset E$, the following holds:
\begin{equation}\label{estimate-ball}
\int_{B_\mu(q_0)} \la \geq \la(q_0)\,\Ga(\mu),
\end{equation}
where $\Ga(\mu)=\frac{m}{c^{m-1}}\int_0^\mu \sin(c\,\tau)^{m-1}e^{-m\ka\tau}d\mu > 0.$

Assume that $\limsup_{x\to \infty \atop{x\in E}} \la(x)>0$. Then there exists $\de>0$ and a sequence $(q_1,q_2,\ldots)$ of points in $E$, with $d(q_k,x_0)\to \infty$, where $x_0$ is a fixed point of $M$, satisfying that $\la(q_k)\geq \de>0$, for all $k$. 
Fixed $0<\mu_0<\min\{\mu_{\m K,\al}, \bar R_0\}$, after a subsequence, we can assume that $B_{\mu_0}(q_k)\subset E$ and $B_{\mu_0}(q_k)\cap B_{\mu_0}(q_l)=\emptyset$, for all $k\neq l$. Thus, by (\ref{estimate-ball}), we have that
$\int_E \la \geq \sum_{k=1}^{N} \int_{B_{\mu_0}(q_k)}\la \geq N\de\,\Ga(\mu_0),$
for all integer $N\geq 1$. This implies that $\int_E \la=+\infty$. Theorem \ref{bound_geom} is proved.

\section*{Acknowledgement} The authors thank Walcy Santos and Detang Zhou for helpful suggestions during the preparation of this article.


\end{document}